\documentclass[a4wide]{article} 

\usepackage[notcite, notref]{}
  
\usepackage[
a4paper,
left=35mm,
right=35mm, 
top=14mm,
bottom=17mm]{geometry}

\usepackage{todonotes}
\usepackage[english]{babel} 
\usepackage[utf8]{inputenc}
\usepackage[T1]{fontenc}
\usepackage{verbatim} 
\usepackage{float}

\usepackage{amsmath}
\usepackage{amsfonts}
\usepackage{amssymb}
\usepackage{amsthm}

\usepackage{mathrsfs}
\usepackage{mathtools}
\usepackage{euscript}   
\usepackage[scr=boondoxo,scrscaled=1.05]{mathalfa} 

\usepackage{pifont} 
\usepackage{tikz-cd} 

\usepackage{subcaption}

\usepackage{listings}
\usepackage{xcolor}

\usepackage[colorlinks=true, pagebackref=true]{hyperref}
\hypersetup{linkcolor=black, citecolor=black, urlcolor=black}
\usepackage{comment} 

\usepackage{cancel}

\newtheorem{theorem}{Theorem}[section]

\newtheorem{conjecture}[theorem]{Conjecture}

\newtheorem{proposition}[theorem]{Proposition}
\newtheorem{lemma}[theorem]{Lemma}
\newtheorem{corollary}[theorem]{Corollary}

\newtheorem{remark}[theorem]{Remark}
\newtheorem{definition}[theorem]{Definition}
\newtheorem{example}[theorem]{Example}

\usepackage[all]{xy}

\usepackage{bbm}  

\newcommand\EE{\mathbb{E}}

\newcommand\NN{\mathbb{N}}

\newcommand\RR{\mathbb{R}}

\newcommand\one{\mathbbm{1}} 
\newcommand\nub{\nu_{\scalebox{0.4}{\bf +}}}
\newcommand\nua{\nu_{\scalebox{0.7}{\bf -}}}
\newcommand\nb{n_{\scalebox{0.4}{\bf +}}}
\newcommand\na{n_{\scalebox{0.7}{\bf -}}}

\newcommand{\G}{\mathcal{G}}

\newcommand{\cC}{\mathcal{C}}

\newcommand{\cT}{\mathcal{T}}

\newcommand{\rmT}{T}

\newcommand{\tb}{{\tilde b}}

\newcommand{\sB}{\mathsf{B}}

\newcommand{\bari}{{\mathsf{b}}}

\newcommand\eps{{\varepsilon}}
\newcommand\vep{{\varepsilon}}

\DeclareMathOperator\Cov{Cov}
\DeclareMathOperator{\sign}{sign}

\newcommand{\toe}{\;\mathop{\longrightarrow}_{\eps\to0}\;}

\allowdisplaybreaks
\title{Continuous auction models}
\author{Gioia Carinci, Pablo A. Ferrari,\\ Chiara Franceschini,	Nicola Manelli}

\begin{document}

\maketitle

\paragraph{Abstract} 
We study a discrete-time auction model in which multiple sellers update their bids according to their performance in the preceding round. Bidders aim to maximize their profits and adjust their bids solely based on their most recent outcome (myopic behavior). In each round, the auctioneer purchases the lowest-priced $p$-fraction of the total quantity offered by the bidders. We find a system of differential equations governing the macroscopic dynamics and derive it as a scaling limit of the microscopic model. We find an explicit solution for the max-price evolution $q_t$ and show that, in the long run, bidders coordinate, i.e., their bids converge to a common value depending only on their initial distribution and the fraction $p$. For Poisson-distributed initial bids, we establish hydrodynamic limits for the empirical bid distribution and the max-price trajectory and conjecture the corresponding Gaussian fluctuations for $q_t$. Finally, we generalize the model to allow for heterogeneous bid-update velocities: in this case, the max-price velocity becomes proportional to the harmonic mean of the update velocities of bidders at the max-price.

\noindent{\sl Keywords: Auctions, Poisson process, hydrodynamic limits} 


\tableofcontents

\section{Introduction}


Auctions are a fundamental tool for selling different kinds of goods and resources, ranging from traditional commodity markets to modern online platforms. Their economic foundations are extensively studied in auction theory,  see Krishna \cite{krishna2009auction}.
From a mathematical point of view, several auction models have been proposed in the literature, each developed to capture different aspects of the bidding process, market structure and participant behavior.
When the number of participants is large, however, a detailed game-theoretic description becomes difficult and one is naturally led to a statistical description of the collective behavior of bidders.
In this framework, the interactions among many agents are viewed as collisions in a many-particle system, allowing auction dynamics to be studied using methods from kinetic theory and statistical physics, see Pareschi and Toscani \cite{pareschi2013interacting}.
Another well known example of trading mechanism is the limit order book dynamics. Such models provide a framework for buyers and sellers to interact with each other in financial markets, see \cite{jain2024limit} for a review. In  \cite{cont2025mathematical} Cont, Degond and Xuan propose a general framework where the dynamic is decomposed into an incoming order flow, represented by a spatial point process and the market clearing, modeled by a deterministic operator acting on the distributions of buy and sell orders. 


In this paper we consider a model proposed by Pinasco Saintier Kind (PSK) \cite{psk24} where there is a unique auctioneer, multiple seller agents and successive auction rounds. Let $n$ be a positive natural number, $\gamma>0$ a small parameter and $0\le b_1\dots\le b_n\le1$ where the bids belong to the discrete set, $\{0,\gamma, 2\gamma,\dots,1-\gamma,1\}$. At the first round, seller $i$ offers $1/n$ of the total mass $1$ at bid $b_i$. A fixed fraction $p=k/n$ of the total mass is awarded to bidders $1, \dots, np$, so that after the first round, there are $np$ winners and $n(1-p)$ losers. For the second round, each bidder updates her bid only based on the outcome of the previous round, win/loss, following a myopic strategy. Winners of the first round increase their bid by $\gamma$ and losers  decrease theirs by $\gamma$; boundary conditions are fixed so that bids remain in $[0,1]$. Rounds occur at times multiples of $\gamma$. In the long run, bidders tend to converge to a common bid value, which depends on both the initial distribution of bids and the value of $p$. See Figure \ref{discreto}.
\begin{figure}[th]
  \centering
  \includegraphics[width= .7\textwidth]{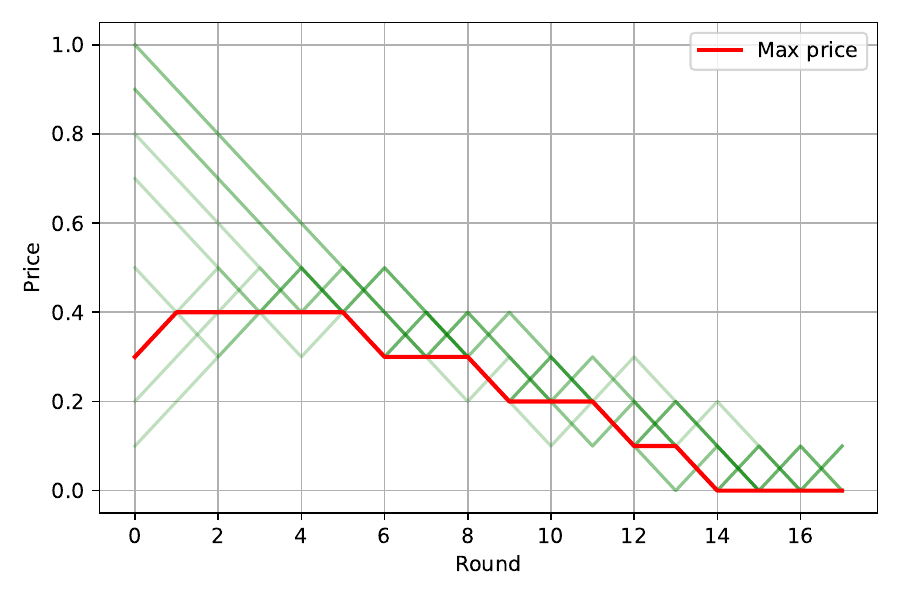}
  \caption{PSK model with $p=0.3$, $n=10$ and $\gamma=0.1$. The green lines represent bid evolution, darker green lines represent multiple bids at the same value; there are double bids at $1$ and $0.9$. The red line is the max-price evolution. At each round, winners increase their bids by $\gamma$ and losers decrease their bids by the same value.}
  \label{discreto}
\end{figure}
The papers \cite{smkp21, psk24} consider this system with the addition of Gaussian noise to the bid updating, propose a scaling limit by taking $\gamma$ to zero to obtain a system of ordinary differential equations for the empirical measure of the bids, show that there is a phase transition at $p=1/2$ and perform several simulations confirming their results and conjectures. The paper \cite{smkp21} considers the interesting case of $p$ varying in time, whose inverse is called competition level.

We introduce a continuous version of the PSK model, with an arbitrary initial bid distribution $\mu$, a probability measure on $[0,1]$ and denote by $q_t$    the max-price paid by the buyer at time $t$, which is also  the largest winner bid at time $t$. The function $q_t$, $t\ge 0$ is defined as the solution of a system of ordinary differential equations for $q_t$, $w_t$ and $\ell_t$ as a function of $\mu$, where $w_t$ and $\ell_t$ are the winning and losing mass bidding $q_t$ at time $t$. We provide an explicit solution in terms of the barycenter of the bids coinciding with the max-price. See Figure \ref{sin}. The resulting bid measure at time $t$, denoted $\mu\mathcal{T}_t$, puts mass $(w_t + \ell_t)$ at $q_t$, shifts the set of initial winners that have not yet attained $q_t$ by $t$ and shifts the set of initial losers that have not yet attained $q_t$ by $-t$. Figure \ref{ball} shows the continuous model with an initial sum of delta measures.
\begin{figure}[th]
  \centering
\includegraphics[scale=0.4]{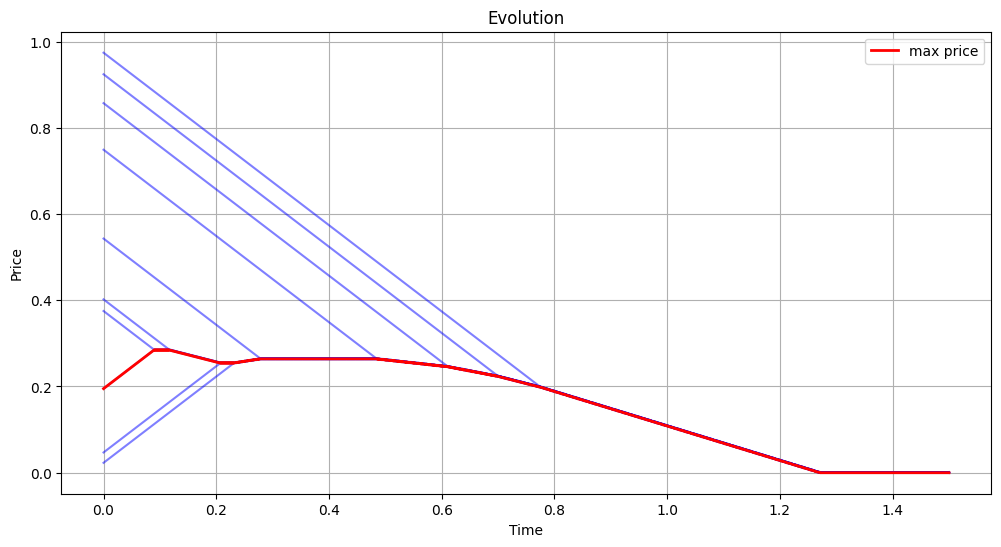} 
  \caption{The continuous model with initial sum of $10$ delta measures and $p=0.3$. Winners travel ballistically  at speed 1 and losers travel down at the same velocity. Both modify and adopt the velocity of the max-price upon colliding with it. The max-price travels ballistically between collisions.}
  \label{ball}
\end{figure}

We show analogous theorems for the discrete PSK model. In particular, we show that $q^\gamma_t$ remains at a distance of less than $2\gamma$ from the barycenter of those bids which have hit the max-price within time $t\gamma$. We also show that when the initial bids are concentrated on three sites, the motion of the max-price is periodic.   
\begin{figure}[th]
  \centering
  \includegraphics[width= .7\textwidth]{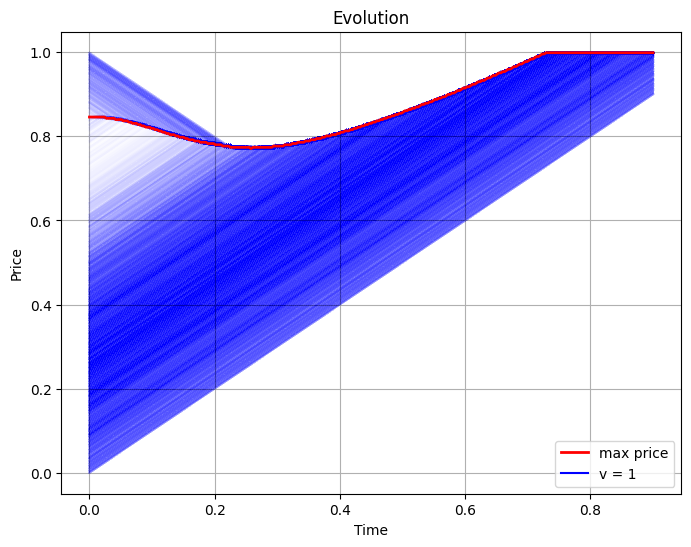}
  \caption{The continuous auction model with non homogeneous initial delta measures. The initial density is a sinusoidal curve; $p=0.9$. Winners travel ballistically upwards at speed $1$ and losers travel downwards at the same speed. When a bid collides with the max-price, the red line in the picture, they coalesce and modify the speed of the max-price. When the max-price arrives at one of the extremes of the interval, it remains there.}
  \label{sin}
\end{figure}
We show that the continuous model starting with a finite sum of delta measures can be approached by a family of PSK models indexed by the parameter $\gamma$, as $\gamma$ goes to zero. This convergence justifies the use of the continuous model, which is easier to deal with. After that we consider a family of non-homogeneous bid Poisson processes $B^\vep$ on $[0,1]$ with mean measure $\vep^{-1}\mu$ and show that the bid empirical measure $\mu^\vep\cT_t$ converges as $\vep\to0$ to $\mu \cT_t$. In particular, the empirical max-price trajectory $q^\vep_t$ converges almost surely to the expected one $q_t$. We perform the diffusive rescaling of the max-price and conjecture that it converges to a Gaussian process with non trivial covariances that are computed explicitly.

A striking property of this model is that when the updating bid policy is non-homogeneous, the max-price velocity is proportional to the harmonic mean of the velocities of the bids coinciding with the max-price. In the discrete case, agent $i$ has an associated velocity $v_i$, a natural number, and after each round increases/decreases her bid by $v_i\gamma$. We show that the max-price velocity is given by $  \gamma h \cdot (w-\ell)/(w+\ell)$, where $h=(w+\ell)/\sum_i(1/v_i)$ and the sum runs over the $i$ for which $b_i$ is close to $q$. See Figure \ref{2vel}. We show that this is the case when there are only two bid-velocities, and for multiple bids, assuming that the motion is periodic. In the continuous case with an initial sum of a finite number of delta measures, we describe the system of ordinary differential equations governing the evolution. Since the velocities vary, it is possible that bids hitting the max-price get out of it, because their velocity is not high enough to follow the max-price and we show how this separation occurs. See Figure \ref{2velpas} in Section \ref{s62}.
\begin{figure}[th]
  \centering
  \includegraphics[width= .6\textwidth, trim=0 8mm  0 0, clip]{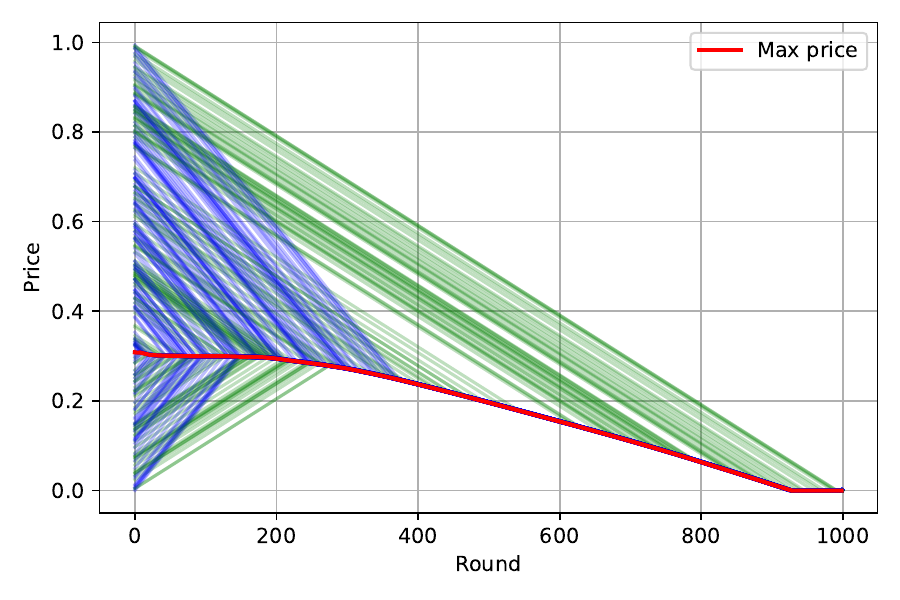}
  \caption{Two different velocities $\nu_{-}=1$ and $\nu_{+}=2$ with $p=0.3$ and random initial measure for both fast and slow bids.}
  \label{2vel}
\end{figure}

The paper is organized as follows. We start by introducing the continuous model in Section \ref{Cont}. We define it as the solution of a system of ordinary differential equations and give an explicit formula for the solution. In Section \ref{discretemodel}, we describe the corresponding discrete PSK model and, following the approach of the previous section, we provide an approximate solution of first order  in $\gamma$ for the system of discrete equations. We show that the discrete process is locally periodic when all bids belong to a $\gamma$ neighbourhood of the max-price. In Section \ref{s4}, we show that the discrete model converges to the continuous one as $\gamma$ tends to zero. In Section \ref{s5} we consider the continuous process with initial condition $\sum_b\delta_b$, where the $\delta_b$ are delta measures and the $b$ are the points of a Poisson process. We prove the hydrodynamic limits for the empirical measure of bids at each time $t$ and for the empirical max-price. We then diffusively rescale  the trajectory of the max-price around the mean trajectory to obtain a Gaussian diffusion with non trivial covariances. Finally, in Section \ref{s6} we generalize the PSK model by allowing each bidder to have an update policy, called velocity. We prove that, in the case of two velocities, the max-price velocity is a multiple of the harmonic mean of  the velocities of the bids  adhering to the max-price. We establish this result in the continuous model and show that the multi-velocity discrete model converges to the continuous one.

\section{Continuous model}\label{Cont}
In this section, we introduce a continuous space-time version of the PSK model by defining its dynamics through system of ordinary differential equations governing the joint evolution of the bids and the max-price.

Let $p\in[0,1]$ and $\mu$ be a probability measure on $[0,1]$ and let the cumulative function $F$ be defined by $F(x) := \int_0^x \mu(dx')$. Denote by  $q_0$ the max-price at time $0$, given by the $p$-quantile of the bids at time $0$, $q_0= F^{-1}(p):=\inf\{x\ge0: F(x)\ge p\}$. The heuristic of the evolution is as follows. Bids strictly less than $q_t$, called winners, travel ballistically at velocity $1$, those strictly larger than $q_t$, losers, travel ballistically at speed $-1$. Denoting by $w_t$ and by $\ell_t$ the amount of winners and losers at the max-price at time $t$, the velocity $u_t$ of the max-price is given by the average $(w_t-\ell_t)/(w_t+\ell_t)$.
\subsection{Evolution}
We define a system of ordinary differential equations for the evolution of $q_t$ as follows. 

Let $\mu$ be a probability on $[0,1]$ and $F$ the corresponding cumulative probability distribution function. For each $p\in (0,1)$, define the system
\begin{align}
  \frac{dq_t}{dt}
  &= u_t \,  \cdot \one\{q_t\in(0,1)\}  
    \label{wt}\\
   w_t&
        = p - F((q_t - t)^-)\label{wt1}\\
  \ell_t&
          = F(q_t + t) -p\label{lt1}\\\
  u_t&=
  \frac{w_t-\ell_t}{w_t+\ell_t}\;\label{ut}\\
 q_0&=  F^{-1}(p)\qquad \text{(initial condition)}
 \label{qb0}
\end{align}

Eventually, all the bids are absorbed in the max-price. This absorption can occur either before or after $q_t$ hits the ceiling ($q_t = 1$) or the floor ($q_t = 0$) depending on the value of $p$ and the initial mass distribution. When this happens $u_t$ vanishes and $q_t$ remains constant as time evolves.

The cumulative distribution $F_t$ at time $t$ is a function of $q_t$ and the initial $F$,
\begin{align}
  \label{17}
  F_t(x) := \bigl[(1-F(x+t))^+\, + (w_t+\ell_t)\bigr]\, \one\{x\ge q_t\} + F(x-t) \one\{x< q_t\}
\end{align}
This coincides with the equation proposed by Pinasco, Saintier and Kind \cite{psk24}.

\paragraph{\bf Existence and uniqueness of solutions}
Let  $G:[0,1]\to \mathbb R^+$ and $\G_t:[0,1]\to \mathbb R^+$ be the functions defined by
\begin{align}
  G(x)&:=\int_0^x F(z)\, dz  \label{Gdef}\\
  \G_t(x)&:= G(x+t) - G(x-t) =\int_{x-t}^{x+t} F(z)\, dz\label{tG1}
\end{align} 
Since $F$ is non-decreasing, also  $\G_t$ is non decreasing even if it is not necessarily strictly increasing.
Both $G$  and $\G_t$ are continuous and semi-differentiable, with right and left derivatives given by:
 \begin{align}
  \partial_+ G(x)&:= \lim_{\eps \searrow 0}\frac{G(x+\eps)-G(x)}{\eps}=F(x)\\
    \partial_- G(x)&:= \lim_{\eps \searrow 0}\frac{G(x)-G(x-\eps)}{\eps}=F(x^-)
 \end{align}
 and
 \begin{align}
  \partial_+ \G_t(x)&:= \lim_{\eps \searrow 0}\frac{\G_t(x+\eps)-\G_t(x)}{\eps}\\
  &= \partial_+ G(x+t)-  \partial_+ G(x-t)\nonumber
  \\
  &=F(x+t)-F(x-t)\nonumber
 \end{align}
 \begin{align}
  \partial_- \G_t(x)&:= \lim_{\eps \searrow 0}\frac{\G_t(x)-\G_t(x-\eps)}{\eps}\\
 &= \partial_- G(x+t)-  \partial_- G(x-t)\nonumber
  \\
  &=F((x+t)^-)-F((x-t)^-)\nonumber
 \end{align}
\begin{lemma} For all fixed $t\ge 0$, the problem
\begin{align}
  \label{35}
  \G_t(x_t)= 2pt, \qquad \text{and} \qquad x_t \in [q_0-t, q_0+t]
\end{align}
admits a unique solution $x_t:=\G_t^{-1}(2pt)$. 
\end{lemma}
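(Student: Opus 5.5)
Existence follows from the intermediate value theorem applied to the continuous function $\G_t$ on $[q_0-t,q_0+t]$, and uniqueness from a strict monotonicity argument at the level $2pt$. For $t=0$ everything is trivial: $x_0=q_0$ is the only point of the interval. So fix $t>0$. Here $F$ is understood to be extended by $F(z)=0$ for $z<0$ and $F(z)=1$ for $z>1$, so that $G$ and $\G_t$ make sense near the boundary.

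\textbf{Existence.} I evaluate $\G_t$ at the endpoints, writing $\G_t(x)=\int_{x-t}^{x+t}F(z)\,dz$. At the left endpoint,
\begin{align*}
\G_t(q_0-t)=\int_{q_0-2t}^{q_0}F(z)\,dz\le 2t\,F(q_0^-)\le 2pt,
\end{align*}
because $F(z)\le F(q_0^-)$ for $z<q_0$, and $F(q_0^-)\le p$ by the definition $q_0=F^{-1}(p)=\inf\{x:F(x)\ge p\}$. At the right endpoint,
\begin{align*}
\G_t(q_0+t)=\int_{q_0}^{q_0+2t}F(z)\,dz\ge 2t\,F(q_0)\ge 2pt,
\end{align*}
since $F(q_0)\ge p$ by right-continuity. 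Because $\G_t$ is continuous and nondecreasing, the intermediate value theorem gives some $x_t\in[q_0-t,q_0+t]$ with $\G_t(x_t)=2pt$.

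\textbf{Uniqueness.} This is the main obstacle, since $\G_t$ need not be strictly increasing. Suppose $x<y$ in the interval both satisfy $\G_t(x)=\G_t(y)=2pt$. Monotonicity then forces $\G_t$ to be constant on $[x,y]$, so its right derivative vanishes there: $F(z+t)=F(z-t)$ for all $z\in[x,y)$. Taking $z=x$ gives $F(x-t)=F(x+t)$, so $F$ is constant, equal to some $c$, on $[x-t,x+t]$.

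Since $x\in[q_0-t,q_0+t]$, the interval $[x-t,x+t]$ contains $q_0$, so $c=F(q_0)\ge p$. If $x-t<q_0$, then $F$ equals $c$ on a left neighbourhood of $q_0$ as well, so $F(q_0^-)=c\ge p$. This contradicts the definition of $q_0$ as an infimum unless $q_0=0$; the boundary cases $q_0=0$ or $q_0=1$ are handled separately using the extension of $F$ outside $[0,1]$. So we must have $x-t=q_0$, i.e.\ $x=q_0+t$. But then $x$ is the right endpoint of the interval, contradicting $x<y\le q_0+t$.

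Hence the solution is unique. For the notation $x_t=\G_t^{-1}(2pt)$, I would note that it is meaningful as the unique preimage of $2pt$ within the interval $[q_0-t,q_0+t]$.
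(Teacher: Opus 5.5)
Your proof is correct and is essentially the paper's argument: existence comes from comparing $\G_t(q_0\pm t)$ with $2pt$ and using continuity, and uniqueness comes from $F<p$ on $(-\infty,q_0)$ and $F\ge p$ on $[q_0,\infty)$. The paper states this directly as $\partial_\pm\G_t(x)=F((x+t)^\pm)-F((x-t)^\pm)>0$ on $[q_0-t,q_0+t)$, while you phrase it as a contradiction with a flat stretch of $\G_t$ (which likewise gives strict monotonicity on the whole interval). The boundary case you defer is immediate: if $q_0=0$ and $x-t<q_0$, then $c=F(x-t)=0<p$ (using $p>0$, as in the paper's setting $p\in(0,1)$), and $q_0=1$ needs no separate treatment.
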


\begin{proof}
We have $F(q_0)=p$,  $F(x)< p$ for $x< q_0$ and $F(x)\ge p$ for $x\ge q_0$. As a consequence,
  \begin{equation*}
\G_t(q_0-t) =\int_{q_0-2t}^{q_0} F(z)\, dz < 2pt \quad \text{and} \quad  \G_t(q_0+t) =\int_{q_0}^{q_0+2t} F(z)\, dz  \geq 2pt
\end{equation*}  
 then, from the continuity of $\G_t(\cdot)$, it follows that for all fixed $t\ge 0$, there exists an $x \in [q_0-t, q_0 +t] $ such that $  \G_t(x)= 2pt$.
 On the other hand, for all $x\in (q_0-t,q_0+t)$, $x+t > q_0$ and then  $F((x+t)^\pm)\ge p$ whereas $x-t<q_0$ and then $F((x-t)^\pm)<p$. As a consequence,
  for  $x\in [q_0-t,q_0+t)$, 
  \begin{equation}
\partial_\pm \G_t(x)=F((x+t)^\pm)-F((x-t)^\pm)>0
  \end{equation}
   i.e. $\G_t(\cdot)$ is strictly increasing in this interval, and then invertible as a function from $[q_0-t,q_0+t]$ to its image.  This proves the lemma.
\end{proof}

\begin{theorem}[Existence and uniqueness]\label{TeoEx}
 The system  \eqref{wt}-\eqref{qb0} has a unique solution $( q_t)_{t\ge0}$ given by
 \begin{align}
   \label{22}
   q_t =
   \begin{cases}
    \G_t^{-1}(2pt)&  \G_t^{-1}(2pt)\in(0,1)\\
     1,&\G_t^{-1}(2pt)>1\\
      0,&\G_t^{-1}(2pt)<0
   \end{cases}.
 \end{align}
The bid distribution at time $t$, denoted $\mu\cT_t$ is given by
 \begin{align}
   \label{11}
  d( \mu\cT_t)(x) = \delta_{q_t}(x)\,\mu([q_t-t,q_t+t)) + d\mu(x-t)\one\{x<q_t\}+ d\mu(x+t)\one\{x>q_t\}.
 \end{align}
\end{theorem}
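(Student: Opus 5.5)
The plan is to show that the candidate $x_t:=\G_t^{-1}(2pt)$ from the Lemma solves the ODE while it stays in $(0,1)$, and that any solution must coincide with it. The key observation is a conservation law. Consider
\[
\Phi_t(x):=\G_t(x)-2pt=\int_{x-t}^{x+t}(F(z)-p)\,dz .
\]
Formally, along a solution $q_t$ one has
\[
\frac{d}{dt}\Phi_t(q_t) = F((q_t+t)^\pm)(1+u_t) + F((q_t-t)^\pm)(1-u_t) - 2p .
\]
With $w_t=p-F((q_t-t)^-)$ and $\ell_t=F(q_t+t)-p$, this equals $\ell_t(1+u_t)-w_t(1-u_t)=(\ell_t-w_t)+u_t(w_t+\ell_t)$, which vanishes exactly when $u_t=(w_t-\ell_t)/(w_t+\ell_t)$. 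Since $\Phi_0(q_0)=0$, the relation $\G_t(q_t)=2pt$ is preserved. Moreover $q_t\in[q_0-t,q_0+t]$ because $|u_t|\le1$, so the uniqueness part of the Lemma forces $q_t=x_t$.

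The steps are as follows.

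\textbf{Step 1 (speed bound and one-sided derivatives).} First check that $w_t,\ell_t\ge0$, so that $|u_t|\le 1$. Then compute the Dini derivatives of $t\mapsto \G_t(x_t)$ using the semi-derivatives of $G$ computed above. For general $\mu$ with atoms, $F$ is only right-continuous, so I would work with right derivatives in $t$. I would also note that $x_t$ is Lipschitz with constant 1: for $s<t$, the interval $[x_s-(t-s),x_s+(t-s)]$ brackets the level $2pt$ by the same monotonicity argument as in the Lemma, applied around $x_s$.

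\textbf{Step 2 (existence).} Show that $x_t$ satisfies \eqref{wt} with the velocity \eqref{ut}. Differentiate the identity $G(x_t+t)-G(x_t-t)=2pt$ from the right. This gives
\[
F(x_t+t)(\dot x_t+1)-F(x_t-t)(\dot x_t-1)=2p
\]
wherever the relevant one-sided values apply, and solving yields $\dot x_t=(w_t-\ell_t)/(w_t+\ell_t)$. Where $w_t+\ell_t=0$, I would argue separately that $\G_t$ is flat there and the velocity is determined by continuity. The truncation to $0$ and $1$ in \eqref{22} handles absorption at the boundary: once $q_t$ hits $\{0,1\}$, \eqref{wt} freezes it. I would then check that $x_t$ moves monotonically outward afterward, so that the clipped formula is consistent.

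\textbf{Step 3 (uniqueness).} For uniqueness I would use the conservation computation of the first paragraph for an arbitrary absolutely continuous solution. Then $\G_t(q_t)=2pt$ with $q_t\in[q_0-t,q_0+t]$, and the Lemma gives $q_t=x_t$ until the boundary is hit. After that, \eqref{wt} gives constancy.

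\textbf{Step 4 (the measure \eqref{11}).} Mass with initial position $b<q_0$ moves as $b+t$ until it meets $q$, at the first $t$ with $b+t=q_t$. Since $q$ has speed at most $1$, it then stays at $q$. The winners not yet absorbed at time $t$ are those with $b<q_t-t$, and similarly the unabsorbed losers are those with $b>q_t+t$. The absorbed mass is therefore $\mu([q_t-t,q_t+t))$, and the pushforward gives \eqref{11}, consistent with \eqref{17}.

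The main obstacle is Step 2 and Step 3 at times when $F$ jumps at $q_t\pm t$, that is, when an atom is absorbed. There the derivative exists only one-sidedly and the ODE must be read in the right-derivative (Carathéodory) sense. I would handle this by working throughout with right Dini derivatives and the right-continuous version of $F$, and by proving equality of the two functions via a comparison argument on $\Phi_t(q_t)$ rather than pointwise differentiation.
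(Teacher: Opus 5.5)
Your proposal is correct and follows the paper's proof. Along a solution, the one-sided derivatives of $G$ give $\frac{d}{dt}\G_t(q_t)=2p$; the bound $|u_t|\le 1$ keeps $q_t\in[q_0-t,q_0+t]$; and the Lemma's uniqueness then forces $q_t=\G_t^{-1}(2pt)$. Your Step 2 (showing the candidate actually solves the ODE, using that $x_t$ is $1$-Lipschitz), the right-Dini-derivative bookkeeping and the boundary check add rigor that the paper's proof leaves implicit.

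One small slip is in Step 4. Your own description, in which the unabsorbed losers are exactly those with $b>q_t+t$, gives absorbed mass $\mu([q_t-t,q_t+t])=w_t+\ell_t$, a closed interval. The half-open interval in \eqref{11} would therefore drop an atom sitting at $q_t+t$.
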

\begin{proof}
Let $(q_t)_{t\ge 0}$ be the solution of the system  \eqref{wt}-\eqref{qb0}. Then, as long as $q_t\notin \{0,1\}$, it satisfies:
   \begin{equation}
    \label{eq:25_micro}
     \dot{q_t}  = \frac{2p - F(q_t+t)- F((q_t-t)^-)}{ F(q_t+t)- F((q_t-t)^-)} 
  \end{equation}
  then
  \begin{align}\label{qui}
    2p&=( F(q_t+t)- F((q_t-t)^-))\,  \dot{q_t} + F(q_t+t) +F((q_t-t)^-)\\
       &=   \partial_+ G(q_t+t)( \dot{q_t}+1) - \partial_-G(q_t-t)( \dot{q_t}-1) \nonumber
  \end{align}
  On the other hand we have that
  \begin{align*}
  &  \frac {d} {dt}\G_{t}(q_t):=\lim_{\eps \to 0} \frac{  \G_{t+\eps}(q_{t+\eps})-  \G_t(q_t)}{\eps}\\
   &= \lim_{\eps \to 0} \frac{  G(q_{t+\eps}+t+\eps)-  G(q_t+t)}{\eps} -   \lim_{\eps \to 0} \frac{  G(q_{t+\eps}-t-\eps)-  G(q_t-t)}{\eps} \\
  \end{align*}
  Let $\kappa_\eps:= \eps+q_{t+\eps}-q_t$, then $\lim_{\eps \searrow 0} \frac {\kappa_\eps}\eps=1+\dot{q}_t$, and,  since $\dot q_t \ge -1$, for any $\eps \in \mathbb R$ with $|\eps|$ small enough, $\kappa_\eps \ge 0$, then
   \begin{align*}
\lim_{\eps \to 0} \frac{  G(q_{t+\eps}+t+\eps)-  G(q_t+t)}{\eps}&= \lim_{\eps \to 0} \frac{  G(q_t+t+\kappa_\eps)-  G(q_t+t)}{\kappa_\eps}\cdot \frac{\kappa_\eps}\eps \\
&= \partial_+G(q_t+t)(1+\dot{q}_t).
     \end{align*}
     Analogously,  let $h_\eps:= \eps- (q_{t+\eps}-q_t)$, then $\lim_{\eps \to 0} \frac {h_\eps}\eps=1-\dot{q}_t$, and,  since $\dot q_t\le 1$, for any $\eps \in \mathbb R$ with $|\eps|$ small enough, $h_\eps \ge 0$, then
   \begin{align*}
 \lim_{\eps \to 0}  \frac{  G(q_{t+\eps}-t-\eps)-  G(q_t-t)}{\eps} &= \lim_{\eps \to 0} \frac{  G(q_t-t-h_\eps)-  G(q_t-t)}{h_\eps}\cdot \frac {h_\eps}{\eps} \\
 &= -\partial_-G(q_t-t)(1-\dot{q}_t)
  \end{align*}
  and then
    \begin{align*}
    \frac {d} {dt}\G_{t}(q_t)&= \partial_+G(q_t+t)(\dot{q}_t+1)+\partial_-G(q_t-t)(\dot{q}_t-1)
  \end{align*}
  Then, using \eqref{qui} we conclude that
  \begin{equation}
  2p=\frac {d} {dt}\G_{t}(q_t) \quad\text{and}\quad 2pt=\G_{t}(q_t).
  \end{equation}
 On the other hand, since  $q_t$ has  velocity $u_t \in [-1,1]$ we deduce that  $ q_t \in [q_0 - t, q_0 + t]$. From these facts we conclude that, as long as  $q_t\notin \{0,1\}$, it coincides with $x_t:\G_t^{-1}(2pt)$, i.e. the unique solution of the problem \eqref{35}.

Finally, \eqref{11} follows from \eqref{17}.
\end{proof}

\paragraph{Barycenter decomposition.}

We provide a physical interpretation of \eqref{35}. 
Let $0\le \alpha\le \beta\le 1$, we denote by $\bari(\alpha,\beta]$ the barycenter of the mass  restricted to the interval  $(\alpha,\beta]$:
\begin{align}
  \bari[\alpha,\beta]&:= \frac{1}{F(\beta)-F(\alpha^-)}\int_{(\alpha,\beta]} z \,d\mu(z) 
= \beta-\int_{(\alpha,\beta]}\frac{F(z)-F(\alpha^-)}{F(\beta)-F(\alpha^-)}\, dz \label{bc1}
\end{align}
\begin{proposition}
The  solution $(q_t)_{t\ge 0}$ of the equation
\begin{align}
  \label{34}
  q_t = \bari[q_t-t,q_t+t] + t\;\frac{p-F((q_t-t)^-)-(F(q_t+t)-p)}{F(q_t+t)-F((q_t-t)^-)}
\end{align}
is the unique solution of the system  \eqref{wt}-\eqref{qb0}.
\end{proposition}

\begin{proof}
We prove that  a function $(x_t)_{t\ge 0}$, is  solution of the problem \eqref{35}
if and only if it satisfies \eqref{34}.
Formula \eqref{bc1} is equivalent to
\begin{equation}\label{bc1*}
\int_\alpha^\beta F(z)\, dz=(\beta-\bari[\alpha,\beta])F(\beta)+(\bari[\alpha,\beta]-\alpha)F(\alpha^-).
\end{equation}
Using this and the definition of $\G_t$ we get
\begin{align}\label{29}
  \G_t(x) &= \bigl(x+t- \bari[x-t,x+t]\bigr)\, F(x+t)+ \bigl(\bari[x-t,x+t]-x+t\bigr)\,  F((x-t)^-) \nonumber \\
             &= \bigl(x- \bari[x-t,x+t]\bigr)\, \bigl(F(x+t)-F((x-t)^-)\bigr)
               +t (F(x+t) +F((x-t)^-)).
\end{align}
Then, equation \eqref{34} follows by imposing $\G_t(x)=2pt$.
\end{proof}

\begin{remark}
  \label{16} \rm
  The quantity $\bari[q_t-t,q_t+t]$ is the barycenter at time $0$ of the initial bids that coincide with the max-price at time $t$. The bids hitting the max-price before time $t$ initially below $p$ have total mass $p-F((q_t-t)^-)$ and move at speed 1, those above $p$ have total mass $F(q_t+t)-p$ and move at speed $-1$; the denominator of the second summand of \eqref{34} is the sum of those masses. Since the total moment is conserved when the bids attain the max-price, the barycenter of this mass at time $t$ is given by the right hand side of \eqref{34}. Since at time $t$ this mass is concentrated in $q_t$, we get the identity \eqref{34}. This identity is equivalent to
    \begin{align}
        q_t = \bari[q_t-t,q_t+t] + t\;\frac{w_t-\ell_t}{w_t+\ell_t}, \quad t\ge0,
    \end{align}
\end{remark}

\begin{example}[Solution for the case $F(x)=x$]  \rm When $\mu$ is the Lebesgue measure on $[0,1]$, the solution $q_t$ of Theorem \ref{TeoEx} can be explicitly computed.
Assume $p \leq \frac12$ and $F(x)=x$ in the interval $[0,1]$. Then, the solution is
  \begin{align}
    \label{eq:5}
    \text{If $p \leq \frac14$, then }
    \quad  q_t &=
          \begin{cases}
            p,& 0\le t\le p
            \\
            -t + 2\sqrt{pt}, & p\le t\le 4p\\
            0,& t\ge 4p
          \end{cases}.\\
    \text{If $\frac14<p<\frac12$, then }
    \quad  q_t &=
   \begin{cases} \label{eq26}
            p,& 0\le t\le p
            \\
     -t + 2\sqrt{pt}, & p\le t\le \frac1{4p}\\
     (2p-1)t + \dfrac{1}{2},  &\frac1{4p} \leq t \leq \frac{1}{2(1-2p)}\\
     0,& t \geq \frac{1}{2(1-2p)}
          \end{cases}.       \\
           \text{If $p=\frac12$, then }
    \quad  q_t &=        \frac12          \text{ for all }  t \geq 0     \;.          
\end{align}
When $x \in (0,1)$ one has that
\begin{align}
\G_t (x)&=
   \begin{cases}
            2xt, & t \leq x \leq 1-t
            \\
      \frac{(x+t)^2}{2}, &  x\leq t, \ x \leq 1-t
      \\
      x+t- \frac{1}{2}, & x\leq t, \ x \geq 1-t
          \end{cases}     . 
\end{align}
The solution $q_t = \G^{-1}_t(2p)$ matches Proposition 8 of \cite{psk24} and it is represented in Figure \ref{graficop03} below.
\end{example}

\begin{figure}[th]
  \centering
  \includegraphics[width= .7\textwidth]{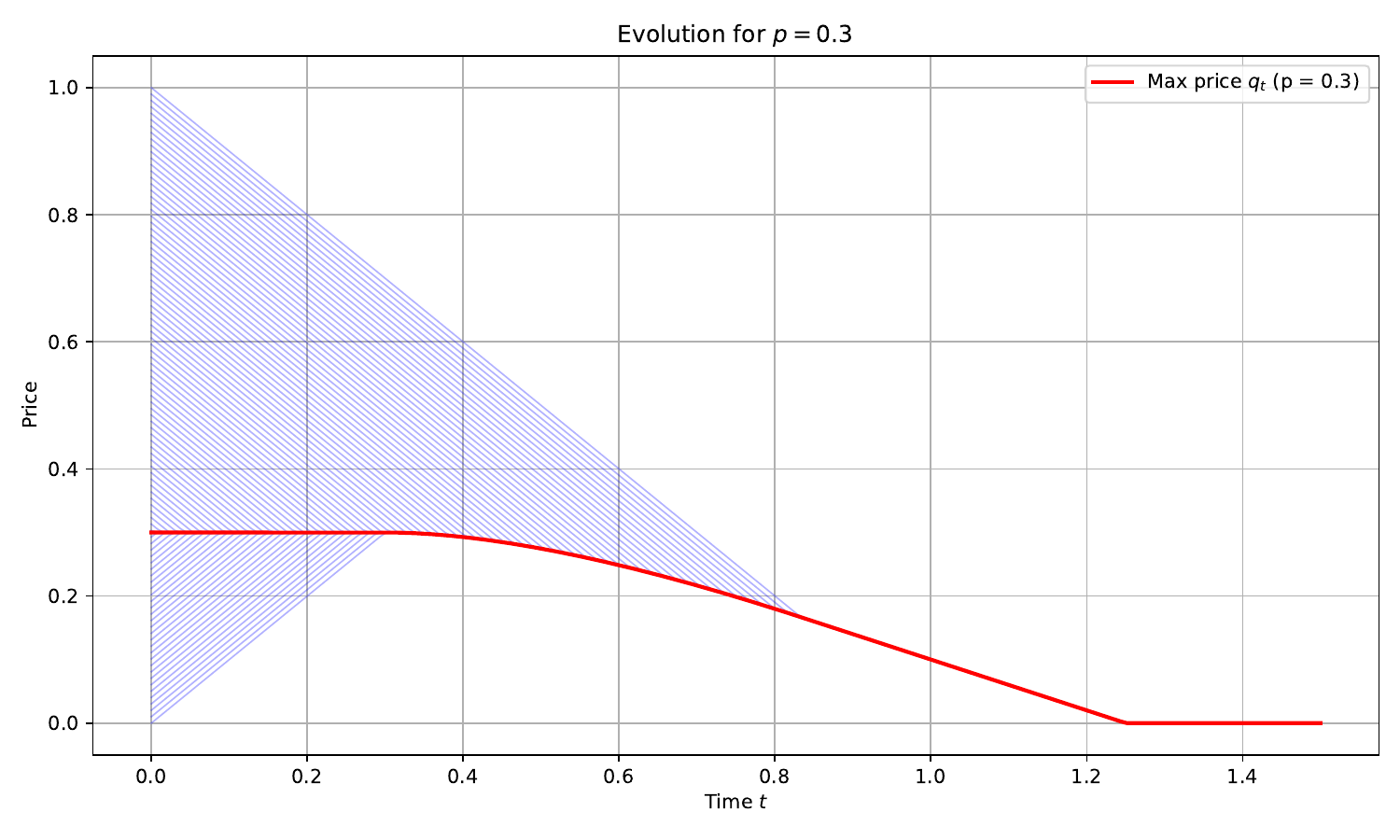}
  \caption{Max-price function $q_t$ computed in equation \eqref{eq26} when $p=0.3$ the initial measure  bids distribution is uniform in $[0,1]$. }
  \label{graficop03}
\end{figure}

\subsection{Initial delta measures}
\label{s21}

We consider an initial combination of delta measures and provide an alternative construction of the solution of  the system  \eqref{wt}-\eqref{qb0}. Let $\mu = \frac1n\sum_{b\in B}\delta_b$, for a multi-set $B$ with $n$ elements in $[0,1]$ and $F$ its cumulative distribution. Let $q_t$ be the evolution of the max-price.  The evolution $b_t=b_t[b,B]$ of each initial bid $b\in B$ is given by 
\begin{align}
 b_t  &=
   \begin{cases}
     b+t &  b<q_0, \;b+t<q_t\\
     q_t&  b\ge q_0, \;b-t\le q_t\text{ or } b\le q_0, \;b+t \ge q_t \\
     b-t&  b>q_0, \;b-t>q_t
   \end{cases}
\end{align}
Since $|u_t|\le 1$ for all $t$, we have $q_t-t\le q_0 \le q_t+t$, and \eqref{17} gives
\begin{align}
  \label{bt37}
 b_t  &=
   \begin{cases}
     b+t & \text{for }  \; b<q_t-t\\
     q_t&  \text{for }  \; q_t-t \le  b \le q_t+t \\
     b-t&  \text{for }  \; b-t>q_t +t\, .
   \end{cases}
\end{align}
Denote $\cT_tB$ the multi-set (we use square brackets to denote multi-sets), 
\begin{align}
  \label{13}
  \cT_tB:= \bigl[b_t[b,B]: \, b\in B\bigr].
\end{align}
The operator $\cT_t$ describes the evolution of the bids satisfying  the system  \eqref{wt}-\eqref{qb0}. The corresponding evolution of the measure $\mu$ is
\begin{gather}
  \label{eq:36}
  \mu \cT_t:= (w_t+\ell_t)\,\delta_{q_t}  + \frac1n\sum_{b\in B} \one\{b_t\neq q_t\}\, \delta_{b_t},\notag\\
w_t+\ell_t=  F(q_t + t) - F((q_t - t)^-)= \frac1n\sum_{b\in B} \one\{ q_t-t< b \le q_t+t\}.\notag
\end{gather}
The dynamics of a bid $b_t$ is ballistic until hitting the max-price trajectory $q_t$. If $b_{s}\neq q_{s}$ for $s<t$ and $b_t=q_t$, meaning that it hits the max-price at time $t$, from that moment on, the 
bidder travels with max-price, whose velocity $u_t$ is updated according to the new updated values of $w_t$ and $\ell_t$. The max-price velocity $u_t$, $t\ge 0$, is piecewise constant with discontinuities at collision times, at which it is right-continuous. 
The max-price function $q_t$, $t\ge 0$, is then continuous and piecewise linear.

The above construction gives an alternative way to show the existence of a unique solution for the system  \eqref{wt}-\eqref{qb0} with bid distribution given by a finite number of delta functions, as follows. 

\begin{theorem}[Evolution of sums of delta measures]
  For $\mu=\frac1n\sum_{b\in B}\delta_b$, for $B\subset[0,1]$ with $n$ bids  the system  \eqref{wt}-\eqref{qb0} has a unique solution $(B_t,q_t)$, $t\ge 0$, which is ballistic except at the collision times.
\end{theorem}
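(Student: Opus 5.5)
We construct the solution explicitly by induction over collision times, and then argue uniqueness by comparing with any other solution interval by interval. Since $\mu$ is atomic, $F$ is a step function with finitely many jumps at the points of $B$. Hence $w_t = p - F((q_t-t)^-)$ and $\ell_t = F(q_t+t)-p$ are piecewise constant along any continuous trajectory $q_t$ with $|\dot q_t|\le 1$. They change only at times $\tau$ where $q_\tau-\tau$ or $q_\tau+\tau$ crosses a point $b\in B$, i.e.\ when the ballistic trajectory $b+\tau$ (for $b<q_0$) or $b-\tau$ (for $b>q_0$) meets $q_\tau$.

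\textbf{Construction.} Set $\tau_0=0$ and $q_0=F^{-1}(p)$. At time $0$ the mass $w_0=p-F(q_0^-)>0$ and $\ell_0=F(q_0)-p\ge 0$ sit at $q_0$, so $w_0+\ell_0>0$ and $u_0=(w_0-\ell_0)/(w_0+\ell_0)\in[-1,1]$ is well defined. Inductively, suppose $q$ is defined on $[0,\tau_k]$ with masses $w_{\tau_k},\ell_{\tau_k}$. On $[\tau_k,\tau_{k+1}]$ define $q_t=q_{\tau_k}+u_{\tau_k}(t-\tau_k)$, stopping the motion if $q$ reaches $0$ or $1$. Here $\tau_{k+1}$ is the first time after $\tau_k$ at which some remaining ballistic bid $b\pm t$ meets this line. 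Since $B$ is finite and $|u|\le1$, the relative velocity between a winner bid (velocity $1$) and $q$ is $1-u\ge0$. If $u=1$, then no loser is absorbed (then $\ell=0$) and winners never catch up; the symmetric statement holds for $u=-1$. So each remaining bid either hits in finite time or never does. Each collision absorbs at least one point of $B$ into the max-price mass, so there are at most $n$ collision times and the recursion terminates. After the last collision, or after hitting the boundary where $\dot q=0$ by \eqref{wt}, $q$ is ballistic or constant. At each $\tau_k$ one checks that the updated $w,\ell$ coincide with \eqref{wt1}--\eqref{lt1}: the absorbed bids are exactly those with $b\in[q_t-t,q_t+t]$, by \eqref{bt37}. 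Hence this $q$, which is right-differentiable with right-continuous $u$, solves \eqref{wt}--\eqref{qb0}, and $B_t=\cT_tB$ as in \eqref{13}.

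\textbf{Uniqueness.} Let $\tilde q$ be another solution. Since $|\tilde u|\le 1$, $\tilde q$ is Lipschitz. On $[0,\tau_1)$, as long as $\tilde q$ has not met any trajectory $b\pm t$, the quantities $\tilde w,\tilde \ell$ are constant and equal to $w_0,\ell_0$. So $\tilde q$ is linear with slope $u_0$ and coincides with $q$ up to the first crossing time, which is therefore $\tau_1$. Iterating over the finitely many collision times gives $\tilde q=q$. Alternatively, uniqueness follows directly from Theorem \ref{TeoEx}, which already characterizes any solution as $\G_t^{-1}(2pt)$ truncated to $[0,1]$; the explicit construction then shows this solution is piecewise linear.

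\textbf{Main obstacle.} The delicate point is the behaviour at collision times and ties. Several bids may hit simultaneously. A bid at $q_0$ must be split between winners and losers according to $p$, which is why $w_0$ and $\ell_0$ use $F(q_0^-)$ and $F(q_0)$. We also need $w_t+\ell_t>0$ throughout, which holds because $w_0>0$ and these masses are nondecreasing. Finally, no bid can be absorbed and then leave: after absorption all such mass moves with $q$ by \eqref{bt37}, which is consistent with the monotonicity of $F(q_t+t)$ and of $-F((q_t-t)^-)$ when $|\dot q|\le 1$. I would handle these issues by working with the one-sided limits exactly as in \eqref{wt1}--\eqref{lt1}.
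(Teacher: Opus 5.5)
Your proposal is correct and follows the same route as the paper's proof. Both rest on three facts: there are finitely many collisions (each absorbs at least one bid), the max-price velocity at a collision time is given by \eqref{ut} and is right-continuous, and $q_t$ and the unabsorbed bids move ballistically in between, which gives existence and uniqueness interval by interval. Your write-up supplies details the paper leaves implicit, namely termination in at most $n$ steps and the comparison argument for a second solution.

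One small step is missing, in your argument and in the paper's. When you say $\tilde q$ is linear with slope $u_0$, you drop the boundary indicator $\one\{q_t\in(0,1)\}$ in \eqref{wt}, so you should also check that no solution can leave $0$ or $1$ after reaching it. This is easy when the boundary is reached from the interior. At $0$ all winners have already been absorbed, so $w_t=p$ is frozen while $\ell_t$ can only grow, and $u_t$ stays negative; at $1$ the same holds with $\ell_t=1-p$ frozen and $w_t$ growing, so $u_t$ stays positive.
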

\begin{proof} Since there is  a finite number of bids at time 0, there is also a finite number of collisions between $q_t$ and $b_t$'s. At collision times the speed $u_t$ of $q_t$ is well defined by \eqref{ut}. Whereas the velocity   of the bids $b_t$ that have not yet reached  the max-price is equal to $1$ or $-1$ according to  \eqref{bt37}. Between two consecutive collisions the dynamics is just ballistic. This shows that the system has a unique solution. 
\end{proof}

\section{Discrete model}\label{discretemodel}
In this section we study the PSK model \cite{psk24} for fixed $\gamma$, using the continuous approach of the previous section. We obtain an explicit formula for the max-price evolution with an error of at most $3\gamma$. Then we show that when the initial bids are concentrated in 3 consecutive sites  $|b_i-b_j|\le 3\gamma$ for all $i,j$, the motion of the max-price is periodic. 
The evolution of the bids and the max-price is periodic and the period depends on whether the number of winners $w$ and the number of losers $\ell$ are coprime.


\subsection{Max-price evolution}
Let $n\in \NN$ and let $\gamma$ be the inverse of a natural number called the learning parameter. We consider discrete bids belonging to $I^\gamma:=[0,1] \cap \gamma \mathbb N$ and a multi-set $B$ with elements  $b\in I^\gamma$.
Each bid is associated with a mass $1/n$, so that the total mass of a bid configuration is $1$.
Fix a parameter $p\in [0,1]$ such that $np\in \NN$. Let $b'_1(B),\dots,b'_n(B)$ be the ordered elements of the multi-set $B$ and denote by $q(B):=b'_{np}$ the $p$-quantile of $B$. Assume that at each time-step $\gamma$, the bids $b_1, \ldots, b_{np}$ win the auction and the remaining bids lose it, so that $q(B)$ is the max-price paid by the buyer. 
For the next round, each winner increases her bid by $\gamma$ and each loser decreases hers by $\gamma$. After the first round, the new bids are denoted by  $\rmT^{\gamma}B$, defined by
\begin{align}
  \rmT^{\gamma}B &:= [b'_i(B)+\gamma \bigl(\one\{0<i\le np\} - \one\{np<i\le n\}\bigr):  i=1, \ldots, n].\label{tb11}
\end{align}
Iterating  $\lfloor t/\gamma\rfloor$ times the operator $\rmT^{\gamma}$ we get the bid evolution and the max-price at time $t$, 
  \begin{align}
  \rmT^{\gamma}_tB &:=  (\rmT^{\gamma})^{\lfloor t/\gamma\rfloor}B, \qquad 
  q^\gamma_t := q(\rmT^{\gamma}_tB)= b_{np}'(\rmT^{\gamma}_tB). \label{23}
\end{align}
Denote the cumulative distribution, its integral, and  the barycenters associated to $B$ by
\begin{gather}
F(x):=\frac{1}{n} \sum_{i=1}^n \one\{b_i\le x\},\qquad
 \G_{t}(x):=\int_{x-t}^{x+t} F(z) dz,\\[2mm]
   \label{ga9}
    \bari:=\frac{\sum_{i=1}^n b_i}{n},\qquad\qquad
   \bari[\alpha, \beta]:=\frac{\sum_{i=1}^n b_i \one_{b_i \in (\alpha, \beta]} }{\sum_{i=1}^n \one_{b_i \in (\alpha, \beta]} }.
  \end{gather}

 \begin{proposition}[Max-price evolution] At time $t \ge 0$, the max-price evolution $q^\gamma_t$  satisfies the following inequalities
\begin{gather}
  \label{34d}
  \Biggl| q^\gamma_t - \bari[q^\gamma_t-t,q^\gamma_t+t] -
  \frac
  {2p-F((q^\gamma_t-t)^-)-F(q^\gamma_t+t)}
  {F(q^\gamma_t+t)-F((q^\gamma_t-t)^-)} \;  \cdot t
  \Biggr|\le\; 2\gamma,\\[2mm]
  \bigl|\G_t(q^\gamma_t)-2pt\bigr| \le 2\gamma. \label{34d*}
\end{gather}
\end{proposition}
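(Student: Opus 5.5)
We will mimic the continuous argument, replacing exact conservation of momentum by an approximate one, and then transfer the barycenter identity \eqref{34d} into \eqref{34d*} through the algebraic identity \eqref{29}.

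\textbf{Step 1: sort the bids at time $t$.} Fix $t=m\gamma$ and write $q=q^\gamma_t$. We split the initial bids into three groups:
\begin{itemize}
\item those with $b_i<q-t$, which have won at every round so far and sit at $b_i+t$;
\item those with $b_i>q+t$, which have lost at every round and sit at $b_i-t$;
\item the ``absorbed'' bids $A=\{i: b_i\in(q-t,q+t]\}$.
\end{itemize}
The key structural claim is that every absorbed bid is, at time $t$, within $\gamma$ (or $2\gamma$) of $q$. To prove it we use the discrete analogue of \eqref{bt37}. An initial winner moves up by $\gamma$ at each step until it first reaches the max-price level. After that it stays in a $\gamma$-neighbourhood of $q^\gamma_s$. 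Indeed, once a bid is at or just below the quantile it cannot get far: if it is below $q_s$ it wins and rises, and if it is above $q_s$ it loses and falls. Meanwhile $q_s$ itself moves by at most $\gamma$ per step, so the ordering cannot jump.

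This gives a Lipschitz property, $|q^\gamma_{s+\gamma}-q^\gamma_s|\le\gamma$. We then check that a bid with initial position in $(q-t,q+t]$ has indeed met the max-price by time $t$, up to a boundary effect of order $\gamma$. The discrepancy between open and closed endpoints, i.e.\ $F((q-t)^-)$ versus $F(q-t)$, only affects bids that are exactly at distance $t$, and these are within $\gamma$ of $q$ anyway.

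\textbf{Step 2: momentum balance.} Let $W$ and $L$ be the numbers of absorbed initial winners and losers, so that
\[
W/n=p-F((q-t)^-),\qquad L/n=F(q+t)-p .
\]
Exactly $np$ bids win at each round. Hence the total displacement $\sum_i b_i(t)-\sum_i b_i(0)$ equals $(2p-1)nt$.

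Subtract from this the displacements of the non-absorbed bids:
\begin{itemize}
\item those below contribute $+t\,nF((q-t)^-)$;
\item those above contribute $-t\,n(1-F(q+t))$.
\end{itemize}
This yields
\[
\sum_{i\in A}\bigl(b_i(t)-b_i\bigr)=t\,(W-L).
\]
By Step 1, each $b_i(t)$ with $i\in A$ lies within $2\gamma$ of $q$. Dividing by $|A|=W+L$ therefore gives $|q-\bari[q-t,q+t]-t(W-L)/(W+L)|\le 2\gamma$, which is \eqref{34d}.

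\textbf{Step 3: from \eqref{34d} to \eqref{34d*}.} Identity \eqref{29} is purely algebraic and holds for any $F$, so it applies to the empirical $F$. It gives
\[
\G_t(q)-2pt=(F(q+t)-F((q-t)^-))\Bigl(q-\bari-t\tfrac{2p-F(q+t)-F((q-t)^-)}{F(q+t)-F((q-t)^-)}\Bigr).
\]
The prefactor is at most $1$, so \eqref{34d*} follows from \eqref{34d}. The case of boundary absorption at $0$ or $1$ needs a separate check, since the rule keeping bids in $[0,1]$ breaks the conservation identity. There we would argue that the same $2\gamma$ slack, together with $q\in\{0,1\}$, handles it, or restrict the statement accordingly.

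\textbf{Main obstacle.} The main obstacle is Step 1: proving rigorously that absorbed bids remain within $2\gamma$ of $q^\gamma_s$ forever, and that non-absorbed bids are exactly those outside $(q-t,q+t]$. Ties at the quantile and the parity of the lattice can let a bid oscillate around $q$. We would handle this by induction on rounds, with the invariant that all bids in the $\gamma$-cluster lie in $\{q_s-\gamma,q_s,q_s+\gamma\}$, checking case by case how the $np$-th order statistic moves.
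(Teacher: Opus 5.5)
Your route is the paper's. The bids starting in $[q^\gamma_t-t,q^\gamma_t+t]$ have barycenter $\bari[q^\gamma_t-t,q^\gamma_t+t]+t(W-L)/(W+L)$ at time $t$, all of them lie within $2\gamma$ of $q^\gamma_t$, and \eqref{29}, whose prefactor is at most $1$, turns \eqref{34d} into \eqref{34d*}. Your global displacement count is a clean justification of the barycenter shift, which the paper only asserts. Its implicit reason is the same as yours: the bids below $q^\gamma_t-t$ win every round, so at each round exactly $W$ members of the group win.

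The trouble is in Step 1, which you rightly call the crux. Write $q_s$ for $q^\gamma_s$. The invariant you plan to carry by induction, that cluster bids stay in $\{q_s-\gamma,q_s,q_s+\gamma\}$, is false. Take $n=3$, $p=2/3$ and bids $[x-2\gamma,x,x]$ in the bulk, so $q_0=x$. The winners move to $x-\gamma$ and $x+\gamma$ and the loser to $x-\gamma$, so $q_\gamma=x-\gamma$. The bid that sat at the max-price is now at $q_\gamma+2\gamma$.

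Your partition also has a hole at the left endpoint. Take the same $n,p$ with bids $[x,x,x]$ and $t=\gamma$. Then $q_t=x+\gamma$ and every bid starts exactly at $q_t-t$, so the set $(q_t-t,q_t+t]$ is empty. The loser ends at $q_t-2\gamma$, not at $b+t$. So $A$ must be the closed interval, as your formula $W=n(p-F((q-t)^-))$ already presupposes. The relevant scale is $2\gamma$ throughout, not $\gamma$.

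The repair is the paper's invariant plus one observation the paper leaves implicit.
\begin{itemize}
\item Each bid moves by exactly $\gamma$, so the $np$-th order statistic moves by at most $\gamma$ per round. In particular $q_t-t\le q_0\le q_t+t$.
\item Checking the cases $b<q_s$, $b=q_s$, $b>q_s$ shows that $|b(s)-q_s|\le 2\gamma$ implies $|b(s+\gamma)-q_{s+\gamma}|\le 2\gamma$.
\item If an initial winner loses for the first time at round $s$, then $q_s\le b(s)\le q_{s-\gamma}+\gamma\le q_s+2\gamma$. From then on it stays within $2\gamma$ of the max-price.
\item Combined with $b(t)\le b+t-2\gamma$, this forces $b\ge q_t-t$. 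Symmetrically, an initial loser that ever wins has $b\le q_t+t$.
\end{itemize}
Since bids below $q_t-t$ are initial winners and bids above $q_t+t$ are initial losers, bids outside $[q_t-t,q_t+t]$ moved ballistically. Bids inside that never switched sit at $b\pm t$, within $2\gamma$ of $q_t$; for example, for an initial winner, $q_t\le b+t\le q_{t-\gamma}+\gamma\le q_t+2\gamma$. With this, your Steps 2 and 3 go through unchanged.

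As for the boundary, \eqref{tb11} involves no clipping and the paper's proof does not treat the boundary either. So your balance is exact for the statement as posed.
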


\begin{proof}
Consider the bids that started in the interval $[q^\gamma_t-t,q^\gamma_t+t]$ at time $t$. Among them, there are  $n(p-F((q^\gamma_t-t)^-)))$ winners who increment their bid by $\gamma$, and $n(F(q^\gamma_t+t)-p)$ losers who decrease their bid by $\gamma$. Hence, their barycenter at time $t$ is
\begin{equation*}
\bari[q^\gamma_t-t,q^\gamma_t+t] +t\;
  \frac
  {2p-F((q^\gamma_t-t)^-))-F(q^\gamma_t+t)}
  {F(q^\gamma_t+t)-F((q^\gamma_t-t)^-))}.
  \end{equation*}
 Once a bid's distance from the max-price falls below $2 \gamma$ it will remain within $2\gamma$ of the max-price thereafter. As a consequence, at time $t$ all the bidders we are considering are in the interval $[q^\gamma_t-2\gamma,q^\gamma_t+2\gamma]$ and therefore also their barycenter must be at a distance of at most $2\gamma$ from the max-price $q^\gamma_t$.  This concludes the first part of the proof.
To prove the second inequality we notice that, as in \eqref{29}, 
\begin{align}\label{291*}
  \G_t(x) &= \bigl(x- \bari[x-t,x+t]\bigr)\, \bigl(F(x+t)-F((x-t)^-)\bigr) \nonumber \\
             & \hskip3cm+t (F(x+t) +F((x-t)^-)),
\end{align}
 and to get \eqref{34d*} multiply both sides of \eqref{34d} by $F(q^\gamma_t+t)-F((q^\gamma_t-t)^-)$.
\end{proof}

\subsection{Periodicity of the max-price evolution}

We prove now that the max-price in the bulk is periodic. Denote $[b_1,\dots,b_n]+a:= [b_1+a,\dots,b_n+a]$. Moreover for two elements $b,b'$ of a multiset $B$ we define the relation 
$b\sim b'$ if and only if $\frac{|b-b'|}{\gamma}$ is even. Then $\sim$ defines an equivalence relation on $B$ that partitions it into two equivalence classes $\hat B$ and $\check B$, with $\hat B  \cup \check B=B$.

\begin{lemma}[Bulk periodicity of the max-price]\label{pe4}
  Let $B$ be a multiset with all bids belonging to three consecutive sites, that is,  $|b-\tb|\le 2\gamma$ for all $b,\tb\in B$. Let $\hat B, \check B \subseteq B$ as above. Let $p \in [0,1]$ such that $w=pn \in \mathbb{N}$ is the total number of winners and  $\ell=(1-p)n \in \mathbb{N}$ is the total number of losers at each step.

  a.  Bulk periodicity. Assume that for all $k\in\{0,\dots,w+\ell\}$, $(\rmT^{\gamma})^kB$ does not interact with the boundary, meaning $(\rmT^{\gamma})^kB \cap\{0,1\}=\emptyset$, then
\begin{align}
(\rmT^{\gamma})^{K}B& = B+\gamma \cdot \frac{w-\ell}{w+\ell} \cdot K  \;.
\end{align}
where $K=\frac{w + \ell}{gcd(w, \ell)}$ if $|\hat B|\neq |\check B|$, and
\begin{equation}
K=\left\{ 
\begin{array}{ll}
\frac{w + \ell}{gcd(w, \ell)} & \text{if } \frac{w + \ell}{gcd(w, \ell)} \text{ is odd}\\
\\
\frac{w + \ell}{2\cdot gcd(w, \ell)} & \text{if } \frac{w + \ell}{gcd(w, \ell)} \text{ is even}
\end{array}
\right. \qquad \text{if} \qquad |\hat B|= |\check B|.
\end{equation}

b. Boundary behavior. For $w < \ell$ the configuration $B= [0, \ldots , 0, \gamma, \ldots, \gamma]$ with the first $\ell$ bids in position $0$ and the remaining $w$ bids in position $\gamma$ is conserved by $ \rmT^{\gamma}$. Similarly, for $w> \ell $, the configuration $B= [1-\gamma, \ldots , 1-\gamma, 1, \ldots, 1]$ with the first $\ell$ bids in position $1 - \gamma$ and the  remaining $w$ bids in position $1$ is conserved by $ \rmT^{\gamma}$; see also Proposition 2 of \cite{psk24} where these two configurations are referred to as locally asymptotically stable. 
\end{lemma}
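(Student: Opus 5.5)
The plan is to reduce everything to integer bookkeeping. Write bids as $b=\gamma x$ with $x\in\NN$, set $n=w+\ell$ and $g=\gcd(w,\ell)$, and let $S(B):=\sum_{b\in B}b/\gamma$. Each application of $\rmT^\gamma$ moves $w$ bids up by one unit and $\ell$ bids down by one unit. Hence $S((\rmT^\gamma)^kB)=S(B)+k(w-\ell)$. Moreover every individual bid changes the parity of its position at every step, so the partition $\hat B,\check B$ is carried along by the dynamics. After $k$ steps, the class that sat on the even sublattice sits on the sublattice of parity $k$.

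\textbf{Step 1 (spread is preserved).} I will show that if all bids lie in three consecutive sites, the same holds after one step, provided no bid is at $0$ or $1$. This is the argument from the proof of the max-price proposition above: bids below $q$ move up, bids above $q$ move down, and bids at $q$ move by $\pm1$. So the new configuration lies in $[q-1,q+1]$ in units of $\gamma$. The hypothesis that $(\rmT^\gamma)^kB\cap\{0,1\}=\emptyset$ for $k\le w+\ell$ guarantees that the boundary rule is never triggered.

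\textbf{Step 2 (rigidity lemma; the main obstacle).} I claim that a configuration $C$ of spread $\le 2$ whose two parity classes have prescribed sizes and prescribed parities is uniquely determined by $S(C)$. In such a configuration one class occupies a single site $m$. The other class splits as $k$ bids at $m-1$ and the rest at $m+1$; it may also occupy only $m\pm1$ when the window degenerates. Given which class is the "middle" one, $S(C)$ is affine in $m$ (with slope $n$, the total number of bids) and in $k$ (with slope $-2$). The parity constraints fix $m$ modulo $2$, and the range $0\le k\le$ (class size) then pins down $(m,k)$. The delicate point is the ambiguity about which class is in the middle, together with degenerate configurations occupying only two sites. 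I would handle this by a short case analysis, comparing $S$ modulo $2$ and the admissible ranges. Any residual ambiguity should occur exactly when $|\hat B|=|\check B|$, where the two classes can be exchanged.

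\textbf{Step 3 (period $K=n/g$).} Put $d:=K(w-\ell)/n=(w-\ell)/g\in\mathbb Z$. Then $B_K:=(\rmT^\gamma)^KB$ and $B+\gamma d$ have the same sum $S(B)+K(w-\ell)$. The parity of each class shifts by $K$ in the first and by $d$ in the second, and $K-d=2\ell/g$ is even. By Step 2 they coincide, which gives $B_K=B+\gamma\frac{w-\ell}{w+\ell}K$.

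\textbf{Step 4 (half period when $|\hat B|=|\check B|$ and $n/g$ is even).} Take $K'=n/(2g)$. Then $d'=(w-\ell)/(2g)=n/(2g)-\ell/g$ is an integer, so the sums again agree. The parity mismatch $K'-d'=\ell/g$ may be odd; in that case the two classes have swapped sublattices. Since the classes have equal size, the refined form of Step 2 (uniqueness up to class exchange) still identifies the multisets, giving $B_{K'}=B+\gamma d'$. When $n/g$ is odd, $(w-\ell)/(2g)$ is not an integer and only $K=n/g$ works.

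\textbf{Step 5 (part b).} This is a direct check. For $w<\ell$, the $w$ winners are among the bids at $0$ and move to $\gamma$. Of the losers, the $\ell-w$ remaining bids at $0$ stay at $0$ by the floor rule, and the $w$ bids at $\gamma$ drop to $0$. This reproduces $\ell$ bids at $0$ and $w$ at $\gamma$. The case $w>\ell$ at the ceiling is symmetric.
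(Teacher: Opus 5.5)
Your proof is correct, and it takes a genuinely different route from the paper's.

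\textbf{Comparison with the paper.} The paper tracks the relative configuration $(n_1,n_2,n_3)$ along its orbit. It introduces descent steps and follows the residue of $n_1$ modulo $w$ through the cases $x\ge w$ and $x<w$. It then passes to residues modulo $\gcd(w,\ell)$ and counts admissible triplets per class, to show that the relative configuration returns after exactly $K$ steps. Only at the end does it use the barycenter drift $\gamma\frac{w-\ell}{w+\ell}$ to turn this return into a translation.

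You invert the logic. The drift of $S$ and the parity bookkeeping come first. A rigidity statement then identifies $(\rmT^\gamma)^KB$ with $B+\gamma d$ directly: a configuration of spread at most $2$ is determined by $S$ together with the number of bids on even sites. This is much shorter and avoids the delicate residue counting. The same sum identity also gives minimality of $K$ in two lines, which is what the paper's cycle-length count is after. Indeed, $(\rmT^\gamma)^kB=B+\gamma c$ forces $nc=k(w-\ell)$, so $k$ is a multiple of $n/g$ or (when $n/g$ is even) an odd multiple of $\frac{n}{2g}$. In the latter case $k-c=2\ell k/n$ is odd, so the classes swap sublattices, which is possible only if $|\hat B|=|\check B|$. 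What the paper's enumeration adds is an explicit description of which triplets the orbit visits and in what order; the statement does not need this. Your part (b) is the same direct check as in the paper. The paper also sketches why that configuration is eventually reached, which the statement does not claim.

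\textbf{Completing Step 2.} This is the one step to tighten, since you leave it as a sketch, and the tool you suggest there does not work. Comparing $S$ modulo $2$ cannot distinguish the two choices of middle class:
\begin{itemize}
\item if class $X$ (size $x$) sits in the middle at $m$ and class $Y$ (size $y$) has $k$ bids at $m-1$, then $S=nm+y-2k$;
\item if instead $Y$ sits in the middle at $m'$, then $S=nm'+x-2j$.
\end{itemize}
Since $m'-m$ is odd, both expressions are $\equiv nm+y \pmod 2$.

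What separates the two families is their ranges. For fixed $m$, the first family fills $[nm-y,\,nm+y]$. The second family with $m'=m+1$ (resp.\ $m'=m-1$) fills $[nm+y,\,nm+y+2x]$ (resp.\ $[nm-2x-y,\,nm-y]$). These intervals tile the admissible residue class of $S$. They overlap only at endpoints, where both descriptions give the same two-site configuration. Equivalently, repeatedly moving one bid from the lowest occupied site up by two units arranges all admissible configurations in a single chain, with $S$ increasing by exactly $2$ at each move.

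Hence, once the number of bids on even sites is fixed, $S$ determines the multiset with no residual ambiguity, including when $|\hat B|=|\check B|$. In particular Step 4 needs no ``refined'' form of the lemma. Both $(\rmT^\gamma)^{K'}B$ and $B+\gamma d'$ have $n/2$ bids on each sublattice, so the lemma applies verbatim.
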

\begin{figure}[th]
  \centering
  \includegraphics[width= .7\textwidth]{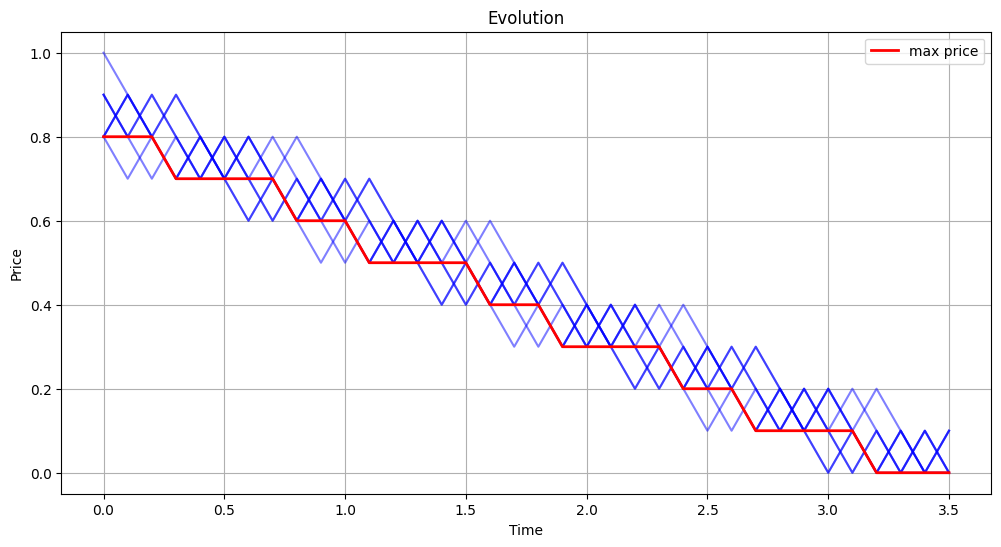}
  \caption{Bulk periodicity. The increments of the max-price are periodic. In the picture there are $3$ winners and $5$ losers with $\gamma = 0.1$. The period is $8$. Superpositions are indicated with darker blue. }
  \label{per}
\end{figure}

\begin{proof}
a. Let $t$ be a multiple of $\gamma$ with $t<\gamma(w+\ell)$ and assume that the bids at time $t$ are at sites $z-\gamma, z, z+\gamma$. If we have  $q^\gamma_{t}=z$, then at time $t + \gamma$ the bids are again at sites $z-\gamma, z, z+\gamma$. If instead $q_t=z\pm\gamma$ then at time $t+\gamma$ the bidders are at sites $z, z\pm\gamma, z \pm2\gamma$.  As a consequence we have that, given our starting configuration, all the bidders will always be on at most three consecutive sites, therefore since the dynamics is deterministic and the number of admissible relative configurations is finite, every trajectory eventually becomes periodic.

In order to simplify the notation, we denote the relative configuration of the particles by the triplet $(n_1,n_2,n_3)$ where $n_1$ particles are bidding a value $i \in I^\gamma$, $n_2$ particles are bidding a value $i+\gamma \in I^\gamma$ and $n_3$ particles are bidding a value $i+2\gamma \in I^\gamma$. 

Observe that, at each time step, every particle moves either upward or downward by $\gamma$. The distance between two particles after each time step can then either stay the same or change by $2\gamma$. Consequently, two particles that in the initial configuration are in the same class, namely both are in $\hat B$ or both are in $\check B$, will still be in the same class in the dynamics.  Let $x$ and $y$ denote the cardinality of $\hat B$ and $\check B$. Since the members of one of the classes will always be concentrated in the central site of the three-site configuration, every admissible configuration satisfies either $n_2 = x$ or $n_2 = y$ and $x+y=w+\ell = n$.

If $w=\ell$ it is trivial to check that $K=2$ if $x\neq y$ and $K=1$ if $x=y$.

Consider the case $w<\ell$; the proof of the case $\ell > w$ is entirely analogous.
If the system is in a configuration with $n_1\ge w$ then at the next step it will be in the configuration $(n_1-w,n_2,n_3+w)$, and we will call this a \textit{descent step}. All the configurations obtained from a descent step have the value $n_1$ in the same class modulo $w$ by definition. 

Let $GCD$ be the greatest common divisor between $w$ and $\ell$; $GCD := gcd(w, \ell)$.
First, we will consider the simpler case when $w$ and $\ell$ are coprime, namely $GCD = 1$ and $x < y$.
In this case, we can show that, starting from any of the possible triplets, the system will cycle through all of the triplets of the form $(n,x,y-n)$ with $0\le n<y$ and all the triplets of the form $(m,y,x-m)$ with $0\le m<x$ before it gets back to the starting one. Therefore the length of the cycle is $x+y = w + \ell$.

Without loss of generality we start with a configuration with $n_2 = x$, i.e. $(n,x,y-n)$ with $0 \leq n < y$.
The key idea is to show that, before returning to the initial configuration, we have visited all the others for $ n_1 \in \lbrace 0, 1, \ldots, y-1 \rbrace$ exactly once.

The starting configuration is $(n,x,y-n)$ and after a finite number of descent steps (possibly zero) the system will eventually reach the configuration $(a,x,y-a)$ with $0\le a<w$ and $a \equiv n \pmod{w}$.

\begin{itemize}
\item[i] \textbf{Case $x \geq w$: } First we focus on the configurations with $n_2 = x$ and by evolving the system from the initial configuration we want to count the configurations of the type $(n_1, x, y-n_1)$.
The configuration at  the next step is
$(x-w+a,y,w-a)$ and after a finite number of descent steps the system will eventually reach the configuration $(b, y, x-b)$ with $0\le b<w$ and $b \equiv x-w+a \pmod{w}$. At the next step the system will then be in the configuration $(y-w+b,x,w-b)$ and we notice that $n_1$ is $y-w+b \equiv y+b  \equiv y+x+a \equiv a+\ell \pmod{w}$ and that $y-w+b$ is the highest number less than $y$ in the class $[a+\ell ] \pmod{w} $.
We continue with descent steps, such that $n_1$ takes all the possible decreasing values between $[0,y)$ of the class $[\ell + a] \pmod{w} $ until it takes  the value of the canonical representative of the class, namely the one in $[0,w)$.
This mechanism repeats analogously until the next time $n_2 = x$, the term $n_1$ of the configuration is equivalent to $a+2\ell \pmod{w} $ and so on until the term $n_1$ is equivalent to $a+(w-1) \ell \pmod{w}$. All the classes $[a], [a + \ell], \ldots [a+(w-1) \ell]$ are distinct because $\ell$ and $w$ are coprime and the following class visited by the system is $a+w \ell \equiv a  \pmod{w}$, which is the starting class. Once in this class, after a finite number of steps, the system will visit the starting configuration $(n, x, y-n)$: the cycle is closed and all configurations with $n_2=x$ with $n_1 \in \lbrace 0, 1, \ldots, y-1 \rbrace$ have been visited. 

With a similar reasoning all the configurations with  $n_2 = y$ and $n_1 \in \lbrace 0, 1, \ldots x-1 \rbrace$ have been visited as we wanted to show. 

\item [ii]  \textbf{Case $x < w$: } Notice that every time the system is in the configuration $(c, x, y-c)$ with $0 \leq c < w $ we need to distinguish two different subcases: 

\begin{itemize}
\item  {If $  n_1 + n_2 \geq w$: }  the next configuration  have $n_2 = y$ and as in case [i]  the following time $n_2 = x$, $n_1$ is the highest number of the class $[\ell + c]$ less than $y$.

\item   {If $ n_1 + n_2 < w$: }  the next configuration has $n_2 = x$, namely $(\ell +c, x, w -x -c)$
where $n_1= y+x+c-w = \ell + c$ is the highest number less than $y$ in the new class $[\ell + c]$.
\end{itemize}
In both cases all the classes $[a+ k*\ell]$ are visited for $k= 1,2, \ldots, (w-1)$. Therefore when $n_2 = x $ all the possible values  $n_1 \in \lbrace 0, 1, \ldots y-1 \rbrace$ are taken. It is left to show that also when 
$n_2 = y $ all the possible values  $n_1 \in  \lbrace 0, 1, \ldots x-1 \rbrace $ are taken. 

This must be the case since for any $0 \leq n_1 < x $  fix $a^* = n_1 - x + w$ and consider the configuration $(a^*, x, y - a^* )$, observe that $a^*  < w$ and $a^* + x \geq w$, therefore the next configuration is $(x-w+a^*, y, w-a^*)$ which, by definition of $a^* $ is $(b, y,x-b)$ and this concludes the proof. 
\end{itemize}

If $x=y$, necessarily $x \geq w$ and the same reasoning as in case [i] applies. 
The two families of configurations $(n_1,x,x-n_1)$
obtained by fixing either color class coincide. Hence each relative
configuration is represented twice in the previous counting argument and therefore the period is halved: $\frac{w + \ell}{2}$.

Now we generalize to the case when $w$ and $\ell$ are not coprime, $w = w' \cdot GCD$ and $\ell = \ell' \cdot GCD$ and $x<y$.
As before we start with the general configuration $(n,x,y-n)$ and we focus on the configuration with $n_2=x$. The same mechanisms of items [i] and [ii] apply, however we notice that the distinct classes visited by the system are  $[a], [a+ \ell], \ldots [(w'-1) \ell +a ]$ since $a \equiv w'\ell + a \pmod{w}$. Since $\ell = \ell' \cdot GCD$ all these classes are the same $\pmod{GCD}$ and as a consequence, all the configurations  $(n_1, x , y  - n_1)$ with $0 \leq n_1 < y$ and $n_1 \equiv n \pmod{GCD}$ are visited.


For configurations of the form $(n_1,y,x-n_1)$, the residue class of
$n_1$ modulo $\mathrm{GCD}$ is also preserved. Indeed, such a
configuration can arise in two ways.

If it is obtained through a descent step, then only $w$ particles are
moved from the leftmost site to the rightmost one, so the value of
$n_1$ changes by a multiple of $w$. Since $\mathrm{GCD}\mid w$, the
residue class of $n_1$ modulo $\mathrm{GCD}$ remains unchanged.

Otherwise, it is obtained from a configuration $(c,x,y-c)$ with
$c+x\geq w$. In this case the next configuration is
$
(x-w+c,\; y,\; w-c),
$
so that
$
n_1=x-w+c.
$
Since both $w$ and $x$ are multiples of $\mathrm{GCD}$ modulo
$\mathrm{GCD}$, we obtain
\[
n_1 \equiv x+c \equiv x+n \pmod{\mathrm{GCD}},
\]
where $c\equiv n \pmod{\mathrm{GCD}}$.
Hence every configuration reached from the initial state belongs to a
fixed residue class modulo $\mathrm{GCD}$.

Moreover, all these configurations are visited fby the same reasoning as in cases [i] and [ii].

We now partition the set of all possible configurations with $n_2 = x$ or $n_2 = y$ fixed into $GCD$ sets and we show that each of these sets has the same cardinality. 
For configurations of the type $(\tilde{n}_1, x, y-\tilde{n}_1)$ we know that $0 \leq \tilde{n}_1<y $ and we can always write $y = \tilde{k}\cdot GCD + \tilde{r}$ with $0 \leq \tilde{r} < GCD$ and for $0 \leq \tilde{n}_1 < \tilde{k} \cdot GCD $, each class has the same number of elements. 
For configuration of the type $(\tilde{\tilde{n}}_1, y, x- \tilde{\tilde{n}}_1)$ we know that $0 \leq \tilde{\tilde{n}}_1 <x $ and we can always write $x = \tilde{\tilde{k}} \cdot GCD + \tilde{\tilde{r}}$ with $0 \leq \tilde{\tilde{r}}< GCD$ and for $0 \leq \tilde{\tilde{n}}_1< \tilde{\tilde{k}} \cdot GCD $, each class has the same number of elements. 
Since $x+y$ is a multiple of $GCD$, either $\tilde{r} + \tilde{\tilde{r}}  = 0 $, in which case we are done, or $\tilde{r} + \tilde{\tilde{r}}  = GCD $. To show that each class gains exactly one element
we notice that $\tilde{k}\cdot GCD  \leq \tilde{n}_1 < \tilde{k} \cdot GCD  + \tilde{r} $ and the $\tilde{r}$ classes are distinct. Similarly $\tilde{\tilde{k}} \cdot GCD  \leq \tilde{\tilde{n}}_1 < \tilde{\tilde{k}} \cdot GCD  + \tilde{\tilde{r}} $ and the $\tilde{\tilde{r}}$ classes are distinct. They are also globally distinct since 
\begin{equation*}
\tilde{\tilde{n}}_1 \equiv \tilde{n}_1 + x =  \tilde{n}_1 + \tilde{\tilde{k}} \cdot GCD +  \tilde{\tilde{r}}  \equiv  \tilde{n}_1 + \tilde{\tilde{r}}  \pmod{GCD} \quad \text{and} \quad  n_1 \in [0, GCD) \pmod{GCD}.
\end{equation*}
Since we have $w + \ell$ total configurations, each cycle has length $\frac{w + \ell}{GCD}$. 

When $x=y$, notice that if in the initial configuration $n_1 \equiv n \mod GCD$ then it will always be in the classes $[n]$ and $[x+n] \mod GCD$. Observe that if $GCD | x$ then those are the same class and therefore proceeding as before we would again count each configuration twice and we have to halve the period. As we can write $x=GCD \cdot \frac{w+\ell}{2 \cdot GCD}$ this happens if $\frac{w+\ell}{GCD}$ is even.

To conclude the proof notice that since at each time step $w$ particles go up by $\gamma$ and $\ell$ move down by $\gamma$, at each time step the barycenter moves by $\gamma \cdot \frac{w-\ell}{w+\ell}$. Since after $K$ time steps the relative configuration of the particles will repeat, also their distances from the barycenter will be the same, which means each of them moves by $\gamma \cdot \frac{w-\ell}{w+\ell} \cdot K$.

b. We consider again the case $w< \ell$. It is clear from the definition of the dynamics \eqref{23} that once the system reaches the configuration with $\ell$ bids at the first site, zero and $w$ bids on the second site $\gamma$, at all the next steps, $w$ of the $\ell$ bids at site zero will jump to site $\gamma$ and the remaining $\ell - w$ will stay at site zero together with the $w$ losers from $\gamma$. Moreover, notice that the system will eventually reach the above configuration because the barycenter moves always down until there are $\ell$ bids at site zero and it is forbidden by the dynamics to have more than $\ell$ bids in zero. Once there are $\ell$ bids in zero, either the remaining $w$ bids are in site $\gamma$ and we are done or at the next step there are less than $\ell$ bids in zero and the barycenter keeps moving down. 
\end{proof}

\section{The discrete model converges to the continuous one}
\label{s4}
We now compare the discrete PSK dynamics of Section \ref{discretemodel}  with the continuous evolution constructed in Section \ref{Cont}. 
The discrete model is rescaled in time and space with mesh of size $\gamma$ and we show that both the discrete max-price and the individual bid trajectories converge to their continuous counterparts as $\gamma$ goes to zero. 

To this end, let $B$ be a multi-set with $n$ elements of $[0,1]$, $B=[b_1, \ldots, b_n]$ with  $b_1\le \ldots\le b_n$.  
Let $B^{\gamma}$, the $\gamma$-discretization of $B$, be the multi-set with elements in $I^\gamma:=[0,1] \cap \gamma \mathbb N$, given by
 \begin{align}
    \label{18}
 B^{\gamma} := [ b^\gamma_1,\dots,b^\gamma_n],\qquad b^\gamma_i:= \gamma \lfloor b_i/\gamma\rfloor.
  \end{align}
  For time $t\ge 0$ define the operator $\rmT^{\gamma}_t$ and the max-price at time $t$ by
\begin{align}
    \rmT^{\gamma}_t B &:=(\rmT^{\gamma})^{\lfloor t/\gamma\rfloor} B,\quad
 q^\gamma_t:= b'_{pn}\bigl(\rmT^{\gamma}_t B\bigr). \label{tb3}
\end{align}
  \begin{theorem}\label{discr-cont}
    Let $p\in [0,1]$ with $pn \in \mathbb N$. Let $B$ be a multi-set with elements in $[0,1]$ and $B^\gamma$ its $\gamma$-discretization with elements in $I^\gamma$. Let $\cT_tB$ be the continuous evolution of $B$ in \eqref{13} and $q_t$ the corresponding max-price. Moreover,  let $\rmT^{\gamma}_tB^\gamma$ be the discrete evolution of $B^\gamma$ in \eqref{tb3} and $q^\gamma_t$ its max-price. Then
  \begin{align}
    \label{20}
    \sup_{t\ge 0}\bigl|  q_t^\gamma - q_t\bigr|&\le (3n+2)\gamma,\\
    \sup_{b\in B}\sup_{t\ge 0}\bigl|  b_t^\gamma - b_t\bigr|&\le (3n+2)\gamma.\label{19}
 \end{align}
\end{theorem}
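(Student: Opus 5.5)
We compare the two evolutions collision by collision. The continuous evolution $\cT_tB$ is piecewise ballistic with at most $n$ collision times $0\le \tau_1\le\dots\le\tau_m$, $m\le n$, at which some bid joins the max-price. The discrete evolution of $B^\gamma$ has the same structure up to errors of order $\gamma$. A bid $b^\gamma_t$ far from $q^\gamma_t$ moves exactly like $b+t$ or $b-t$, up to the rounding $\lfloor t/\gamma\rfloor$ and the initial discretization, so up to an error $2\gamma$. Once within $2\gamma$ of the max-price, a bid stays within $2\gamma$ of it, as shown in the proof of the Max-price evolution proposition. The plan is an induction on the collision index $k$: we show that on $[\tau_k,\tau_{k+1}]$ both $|q^\gamma_t-q_t|$ and $|b^\gamma_t-b_t|$ are at most $(3k+2)\gamma$. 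Each new collision adds at most $3\gamma$ of error, and the constant $2\gamma$ accounts for discretization and time rounding. Since $k\le n$, this gives \eqref{20} and \eqref{19}.

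Rather than tracking the dynamics step by step, we use the barycenter characterization. By the Proposition on max-price evolution, \eqref{34d} holds for $q^\gamma_t$ with the empirical $F$ of $B^\gamma$. By \eqref{34} and Remark \ref{16}, $q_t$ solves the same identity exactly with $F$ the empirical distribution of $B$. Between collisions, the set of bids adhering to the max-price is fixed. The inductive hypothesis will ensure that the discrete and continuous adhering sets coincide, except for bids currently at distance $O(k\gamma)$ from colliding. Given equal adhering sets $A$ with $w$ winners and $\ell$ losers, both sides equal $\bari(A)+t(w-\ell)/(w+\ell)$. The two barycenters differ by at most $\gamma$, because $|b^\gamma_i-b_i|\le\gamma$. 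Combining this with \eqref{34d}, we get $|q^\gamma_t-q_t|\le 3\gamma$ in that regime. So the adhering sets are what carry the error, not time.

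The main obstacle is the transition windows near a collision time $\tau_k$, where the discrete and continuous adhering sets can differ by the bid(s) $b$ colliding at $\tau_k$. Here I would argue as follows. On the window, the continuous $q_t$ and the discrete $q^\gamma_t$ both have slopes in $[-1,1]$. The free bid $b\pm t$ is within the current error $(3(k-1)+2)\gamma$ of its discrete counterpart. Hence the discrete collision time $\tau^\gamma_k$ differs from $\tau_k$ by at most $O(k\gamma)$ divided by the relative speed. Relative speed could be small when $|u_t|$ is close to $1$, so instead of bounding the time I would bound the position directly. During the window, the colliding bid is spatially within the accumulated error of the max-price in both models. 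Including it in the adhering set changes the barycenter identity \eqref{34} only through a term bounded by the spatial gap between the bid and the max-price. This gives a position error of at most the previous error plus $3\gamma$ (rounding $\gamma$, the $2\gamma$ from \eqref{34d}), i.e. $(3k+2)\gamma$, rather than a time-based estimate. Simultaneous collisions are handled identically, counting multiplicity.

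Finally, boundary absorption at $0$ or $1$ is dealt with separately. In the continuous model $q_t$ freezes by \eqref{22}, and the discrete model stays within $\gamma$ of the frozen configuration by part b of Lemma \ref{pe4}, so neither error bound increases. Estimate \eqref{19} for an individual bid follows from \eqref{20}: an adhering bid is within $2\gamma$ of $q^\gamma_t$ and equals $q_t$ continuously, and a free bid is within $2\gamma$ of its ballistic trajectory.
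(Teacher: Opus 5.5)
\textbf{Gap: the mismatched-set (transition window) step.} Your matched-set estimate is correct, and there it is sharper than the paper's. Set $A_t:=\{i: b_i\in[q_t-t,q_t+t]\}$ and $A^\gamma_t:=\{i: b^\gamma_i\in[q^\gamma_t-t,q^\gamma_t+t]\}$. If $A_t=A^\gamma_t$, then \eqref{34} and \eqref{34d} give $|q^\gamma_t-q_t|\le 3\gamma$.

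All the difficulty lies in the mismatched regime, and there your key claim is only asserted. You say the colliding bid stays ``within the accumulated error of the max-price in both models.'' The obvious justification is circular.

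\begin{itemize}
\item Take $j\in A_t\setminus A^\gamma_t$, i.e.\ adhering only in the continuous model. The quantity entering \eqref{34} is its ballistic continuation $b_j\pm t$. This drifts away from $q_t$ for as long as the window lasts, and you explicitly declined to control the window's length.
\item The bound available from the definitions is $|b_j\pm t-q_t|\le |q^\gamma_t-q_t|+\gamma$, i.e.\ the \emph{current} error $E$. Your estimate then reads $E\le 3\gamma+(E+\gamma)/(|A_t|-1)$, which is vacuous when $|A_t|=2$.
\item Mismatches are not localized at the continuous collision times. A winner strictly below $q_0$ but in the same $\gamma$-cell lies in $A^\gamma_t$ from time $0$. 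It stays outside $A_t$ as long as $u_t=1$, so up to and possibly beyond $\tau_1$.
\item Windows of different bids can overlap, with mismatches of both signs.
\end{itemize}

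Consequently the induction ``error $(3k+2)\gamma$ on $[\tau_k,\tau_{k+1}]$, $3\gamma$ more per collision'' has no mechanism behind it, and the constant $3n+2$ is not obtained. For \eqref{19}, your own reasoning gives $2\gamma+|q^\gamma_t-q_t|$, and bids adhering in only one model are not covered. Part~b of Lemma~\ref{pe4} also does not give your boundary claim: it only asserts invariance of one specific two-site configuration.

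\textbf{How your route can be repaired.} Measure each mismatched bid's gap $g_i$ in the model where it is still free. There $g_i$ is non-increasing, because both max-prices move at speed at most $1$ ($\gamma$ per step). Let $y_i:=b_i+t$ for winners and $y_i:=b_i-t$ for losers, and set $R:=A_t\setminus A^\gamma_t$, $S:=A^\gamma_t\setminus A_t$. At grid times one has the exact identity
\[
|A_t\cup A^\gamma_t|\,(q_t-q^\gamma_t)=D-\sum_{i\in S}(y_i-q_t)+\sum_{i\in R}(y_i-q^\gamma_t),\qquad D:=\sum_{i\in A^\gamma_t}(y_i-q^\gamma_t),\quad |D|\le 3\gamma\,|A^\gamma_t|.
\]
The current error now sits on the left. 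Dividing by $|A^\gamma_t|+|R|\ge |S|+|R|$ gives
\[
|q_t-q^\gamma_t|\le 3\gamma+\max_{i\in R\cup S} g_i .
\]
Each $g_i$ is bounded by the error at the start of its window plus $O(\gamma)$. Chaining over at most $n$ windows gives $O(n\gamma)$; the exact constant would need further bookkeeping.

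\textbf{Comparison with the paper.} The paper never tracks adhering sets. It works with the monotone function $\G_\tau$ and compares it uniformly with $\G^\gamma_\tau$ via $\int_0^1|F^\gamma-F|\le\gamma$. Combined with \eqref{34d*}, this gives $|\G_\tau(q^\gamma_\tau)-\G_\tau(q_\tau)|\le 3\gamma$. It then inverts using $\partial_\pm\G_\tau\ge 1/n$ on $(q_0-\tau,q_0+\tau)$. It adds a trivial estimate for $\tau\le\frac32 n\gamma$ and $\gamma$ for time rounding. In that formulation a bid in the symmetric difference is just part of a $\gamma$-small $L^1$ perturbation of $F$, which is exactly the monotone structure your argument lacks.
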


\begin{proof}
  The bound \eqref{19} is a consequence of \eqref{20}. 
To show \eqref{20}, we use the compact notation:
\begin{eqnarray}
&F(x):=F_{B}(x), &  \qquad \G(x):=\G_t[B](x), \\
&F^\gamma(x):=F_{B^{\gamma}}(x),& \qquad \G^\gamma_t(x):=\G_t[B^{\gamma}](x).
\end{eqnarray}
Set $\tau=\lfloor t/\gamma\rfloor\gamma$ and use \eqref{34d*} to get
\begin{equation}
\bigl|2p\tau-\G_\tau^\gamma(q^\gamma_\tau)\bigr| \le 2\gamma.\label{15}
\end{equation}
Observe that
\begin{align}
\bigl|\G_\tau(q^\gamma_\tau)-\G^\gamma_\tau(q^\gamma_\tau)\bigr| &\le 
\int_{q_\tau^\gamma-\tau}^{q_\tau^\gamma+\tau} \bigl| F^\gamma(x)-F(x) \bigr| dx
\le
\int_{0}^{1} \bigl| F^\gamma(x)-F(x) \bigr| dx \\
&\le 
\sum_{k=1}^\frac{1}{\gamma} \gamma \left[F(k\gamma)-F((k-1)\gamma)\right]
=\gamma,\label{14}
\end{align}
since we always have $F^\gamma(x)\ge F(x)$ and $F^\gamma(x)\le F(k\gamma)$ for $x<k\gamma$, $k\in \{1,\dots,n\}$.
Combining inequalities \eqref{15} and \eqref{14}, we conclude that
\begin{equation}
\label{3de}
\bigl|2p\tau-\G_\tau(q^\gamma_\tau)\bigr| =
\bigl|\G_\tau(q_\tau)-\G_\tau(q^\gamma_\tau)\bigr|
\le 3\gamma.
\end{equation}
Consider now two different cases depending on the value of $\tau$:

1. If $\tau>\frac{3}{2} n \gamma$, we have 
\begin{equation}
\G_\tau(q^\gamma_\tau) \ge 2p\tau-3\gamma > 2p\tau-\frac{2\tau}{n} {\ge \G_\tau(q_0-\tau)},
\end{equation}
which means that $q^\gamma_\tau \ge q_0-\tau$. Since we also have $q^\gamma_\tau \le q^\gamma_0+\tau \le q_0+\tau$, it follows  that $q^\gamma_\tau \in (q_0-\tau,q_0+\tau)$.
As in this interval $\G_\tau$ is strictly increasing with a rate of at least $\frac{1}{n}$, the condition \eqref{3de} implies that
\begin{equation}
\bigl|  q^\gamma_\tau - q_\tau\bigr|\le 3n\gamma.
\end{equation}

2. If $\tau \le \frac{3}{2} n \gamma$, we have
\begin{equation}
\bigl|  q^\gamma_\tau - q_\tau\bigr| \le 2\tau+\gamma \le (3n+1)\gamma.
\end{equation}
Overall, we have showed that
\begin{equation}
\bigl|  q^\gamma_\tau - q_\tau\bigr| \le (3n+1)\gamma,
\end{equation}
and to conclude the proof, it is enough to notice that we have $q^\gamma_t = q^\gamma_\tau$ by definition and $\bigl|  q_t - q_\tau\bigr| \le \gamma$ as $t-\tau < \gamma$ and $|\dot q_t|\le 1$.
\end{proof}

\section{Random initial condition}
\label{s5}
Let $\mu$ be a probability measure $\mu$ on $[0,1]$ and $\epsilon=1/n$ for some positive integer $n$. Consider the continuous model of Section \ref{Cont} with initial distribution given by  $\epsilon^{-1}$ independent bids with law $\mu$. We prove a law of large numbers for the max-price and the empirical measure of bids when $\vep$ goes to zero and establish Gaussian fluctuations for the diffusively rescaled max-price $q^\vep_t$, the max-price associated to the empirical distribution $\mu^\vep$, and compute the covariances.
\subsection{Law of large numbers}
Let $p\in[0,1]$ and $\mu (db):= f(b)\, db$ be a probability measure on $[0,1]$ with cumulative distribution function $F:[0,1]\to [0,1]$, $F(x)=\mu([0,x])$.
For $\eps>0$, let $\sB^\eps:= \{b_1,\dots,b_{1/\vep}\}$, where $b_i$ are iid with law $\mu$.
The empirical measure $\mu^\eps$ and cumulative function $F^\vep$ are defined by
\begin{align}
  \label{eq:30}
  \mu^\eps&:= \eps \sum_{b \in \sB^\eps}\, \delta_b,\qquad  F^\eps(x)=\eps \sum_{b\in \sB^\eps} \one\{b\le x\}.
\end{align}
The evolution of $\mu^\vep$ is given by
\begin{align}
  \label{eq:10}
 \mu^\vep\cT_t= (w^\vep_t+\ell^\vep_t) \cdot \delta_{q^\vep_t}+ \vep\sum_{b\in B^\vep} \left(\one\{b+t>q^\vep_t\}\cdot \delta_{b+t}+ \one\{b-t<q^\vep_t\} \cdot \delta_{b-t}\right),
\end{align}
where $q^\vep_t, w^\vep_t,\ell^\vep_t$ satisfy  the system  \eqref{wt}-\eqref{qb0} with initial measure $\mu^\vep$.

The limiting max-price evolution $(q_t)_{t\ge 0}$ is the solution of  the system  \eqref{wt}-\eqref{qb0} for the initial measure $\mu$.  Define
  \begin{align}
    \label{eq:32}
    G^{\eps}(x) = \int_0^x F^\eps(y) dy,\qquad G(x) = \int_0^x F(y) dy.
  \end{align}

  \begin{theorem}[LLN]\label{21}
    Let $(q_t^\eps)_{t\ge 0}$ be the max-price evolution for $\mu^\vep$ and  $(q_t)_{t\ge 0}$ the max-price evolution for $\mu$. Then, for all fixed $t\ge 0$,
  \begin{align}
    \lim_{\eps\to0} q^{\eps}_t &= q_t    \qquad \text{a.s}.\label{qet1}
  \end{align}
\end{theorem}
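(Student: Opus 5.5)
The plan is to use the variational characterization of Theorem \ref{TeoEx}: for each fixed $t$, $q_t$ is (up to truncation at $0,1$) the unique $x\in[q_0-t,q_0+t]$ with $\G_t(x)=2pt$, and the same holds for $q^\eps_t$ with $\G^\eps_t(x)=G^\eps(x+t)-G^\eps(x-t)$ and the empirical quantile $q^\eps_0=(F^\eps)^{-1}(p)$. Since the truncation map $x\mapsto \min(\max(x,0),1)$ is $1$-Lipschitz, it suffices to show $x^\eps_t:=(\G^\eps_t)^{-1}(2pt)\to x_t:=\G_t^{-1}(2pt)$ almost surely.

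First, by Glivenko--Cantelli, $\sup_x|F^\eps(x)-F(x)|\to0$ a.s.; integrating gives $\sup_{x}|\G^\eps_t(x)-\G_t(x)|\le 2t\sup|F^\eps-F|\to0$ a.s. Second, since $\mu$ has a density $f$, $F$ is continuous. I also need $q^\eps_0\to q_0$; this holds a.s. provided $F$ is strictly increasing at $q_0$, i.e. $F(q_0+\delta)>p$ for all $\delta>0$. I will assume this nondegeneracy. If it fails, $F$ is flat on a right neighbourhood of $q_0$, and then the sign of the fluctuation $F^\eps(q_0)-p$ genuinely matters. Even so, the defining equation only involves $\G_t$ on a window, so the argument below may still go through as long as strict monotonicity of $\G_t$ near $x_t$ holds.

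The key step uses strict monotonicity. From the proof of the Lemma, $\G_t$ is strictly increasing on $[q_0-t,q_0+t]$, with $\partial_\pm\G_t(x)=F(x+t)-F(x-t)>0$ there. So for any $\delta>0$ there is $\eta>0$ with $\G_t(x_t+\delta)>2pt+\eta$ and $\G_t(x_t-\delta)<2pt-\eta$. When $x_t\pm\delta$ leaves the window, I will use instead the endpoint values $\G_t(q_0-t)<2pt\le\G_t(q_0+t)$ together with monotonicity of $\G_t$ on all of $\RR$. For $\eps$ small, uniform convergence gives $\G^\eps_t(x_t+\delta)>2pt$ and $\G^\eps_t(x_t-\delta)<2pt$. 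Because $\G^\eps_t$ is nondecreasing everywhere, its solution $x^\eps_t$ inside the window $[q^\eps_0-t,q^\eps_0+t]$, which converges to $[q_0-t,q_0+t]$, must lie in $(x_t-\delta,x_t+\delta)$. Since $\delta$ is arbitrary, $x^\eps_t\to x_t$ a.s.

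I expect the main obstacle to be the boundary case $x_t=q_0\pm t$, where the window constraint is active and the strict inequality at one endpoint can degenerate. For example, $\G_t(q_0+t)=2pt$ exactly happens when $F\equiv p$ on $[q_0,q_0+2t]$, contradicting strict increase at $q_0$. For $x_t=q_0-t$ we have $\G_t(q_0-t)<2pt$ strictly, so $x_t$ is interior on that side. In both cases continuity of $F$ together with the nondegeneracy of the quantile makes $x_t$ an interior point of strict increase, and the squeeze argument applies. I would handle these cases by checking them explicitly.
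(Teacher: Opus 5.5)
Your proposal is correct and follows the paper's route. Like the paper, you combine the uniform a.s.\ convergence $\G^\eps_t\to\G_t$ with the characterization $\G_t(q_t)=2pt$ of Theorem~\ref{TeoEx}; your squeeze argument makes explicit the paper's one-line appeal to ``invertibility at $2pt$'', and for $t>0$ it traps every root of $\G^\eps_t=2pt$, so convergence of the windows is only really needed at $t=0$.

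Your nondegeneracy hypothesis ($F(q_0+\delta)>p$ for all $\delta>0$) is genuinely required, and the paper also uses it implicitly. If $F$ is flat at level $p$ on $[q_0,q_0+a]$ with $a>2t>0$, then $\G_t\equiv 2pt$ on $[q_0+t,q_0+a-t]$. In that case $q^\eps_t$ sits near $q_0+t$ or near the right end of that solution set according to the sign of $F^\eps(q_0)-p$, so it does not converge. You should therefore drop your hedge that the argument might survive without this assumption.
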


\begin{proof}
By the law of large numbers, 
 \begin{align}
    F^{\eps}(x) &\toe F(x)  ,\label{fet1}\\
    G^{\eps}(x) &= \eps \sum_{b\in \sB^\eps} (x-b) \one_{b\le x} \toe \int_0^1 (x-z)\, \one_{z\le x}\, f(z) dz =  G(x) \label{eq:33}
   \end{align}
where  both convergences hold a.s. and uniformly in $x\in [0,1]$. 

\noindent
 Let
$ \G_t(x):= G(x+t) - G(x-t) $ and $\G^\eps_t(x):= G^\eps(x+t) - G^\eps(x-t)$, then, as a consequence of \eqref{eq:33}, for all fixed $t \ge 0$,
\begin{equation}
\lim_{\eps\to 0} \sup_{x\in [0,1]}\big| \G^\eps_t(x)-\G_t(x)\big|=0 \quad \text{a.s.}
\end{equation}
Theorem \ref{TeoEx} implies that $(q_t)_{t\ge 0}$ is the unique  solution of the equation $\G_t(q_t)=2pt$, whereas $(q^\eps_t)_{t\ge 0}$ is the unique   solution of the equation $\G^\eps_t(q^\eps_t)=2pt$. Using that both $\G_t$ and $\G^{\eps}_t$ are invertible in $2pt$, we deduce that
\begin{equation}
\lim_{\eps\to0} \big| q^{\eps}_t  -q_t  \big|=\lim_{\eps\to 0}\big| (\G^\eps_t)^{-1}(2pt)-(\G_t)^{-1}(2pt)\big|=0 \quad \text{a.s.}
\end{equation}
from which equation \eqref{qet1} follows.
\end{proof}

\subsection{Max-price Gaussian fluctuations}\label{s52}

We compute the fluctuations around the limiting deterministic trajectory.

\subsubsection{Functional central limit theorems for Poisson process empirical functions}

Define the empirical fluctuations for $F^\vep$ and $G^{\vep}$ by
\[
\zeta^{F^\epsilon}(x) := \epsilon^{-1/2}\big(F^\epsilon(x)-F(x)\big), \qquad
\zeta^{G^\epsilon}(x) := \epsilon^{-1/2}\big(G^\epsilon(x)-G(x)\big).
\]

Let $\xi^F$ be the white noise on $[0,1]$ with control measure $\mu$; i.e., a Gaussian random measure with
\[
\mathbb{E}[\xi^F(I)]=0, \quad \operatorname{Cov}(\xi^F(I),\xi^F(J))=\mu(I\cap J).
\]
where $I,J\subset[0,1]$ are Borel sets. Define
\[
\zeta^F(x) := \xi^F([0,x]), \qquad \zeta^G(x) := \int_0^x \zeta^F(z)\,dz.
\]

\begin{proposition}[Functional CLT for $F^\epsilon$]
\label{prop:F}
Under the above assumptions,
\[
\zeta^{F^\epsilon} \stackrel{\text{law}}{\longrightarrow} \zeta^F \quad \text{in } D[0,1] 
\]
 with respect to the Skorokhod $J_1$ topology. Moreover, $\zeta^F$ is a centered Gaussian process with covariance
\[
\mathbb{E}[\zeta^F(x)\zeta^F(y)] = F(\min(x,y)),
\]
i.e., $\zeta^F(x)=B(F(x))$ for a standard Brownian motion $B$.
\end{proposition}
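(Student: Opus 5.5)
The plan is to work in the Poisson setting announced in the introduction: $\sB^\eps$ is a Poisson point process on $[0,1]$ with mean measure $\eps^{-1}\mu$. The covariance $F(\min(x,y))$ in the statement is the Poisson one. With exactly $1/\eps$ i.i.d.\ bids the limit would instead be the bridge $B(F(x))-F(x)B(1)$, so the statement should be read with a Poisson number of bids. The proof then reduces to the classical functional CLT for a centered Poisson process, obtained through a time change by $F$.

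\emph{Step 1 (time change).} Since $\mu(db)=f(b)\,db$, the function $F$ is continuous and nondecreasing. By the mapping theorem for Poisson processes, the image $F(\sB^\eps)$ is a homogeneous Poisson process on $[0,1]$ with intensity $\eps^{-1}$. Let $N_\eps(s)$ be its counting function. Then $\eps^{-1}F^\eps(x)=N_\eps(F(x))$ almost surely, because $b\le x \iff F(b)\le F(x)$ up to a null set (the intervals where $F$ is flat carry no points a.s.). Hence
\[
\zeta^{F^\eps}(x)=Y_\eps(F(x)),\qquad Y_\eps(s):=\eps^{1/2}\bigl(N_\eps(s)-\eps^{-1}s\bigr),\quad s\in[0,1].
\]

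\emph{Step 2 (Donsker for Poisson).} Show that $Y_\eps\to B$ in law in $D[0,1]$ with the $J_1$ topology, where $B$ is a standard Brownian motion.

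For the finite-dimensional distributions, the increments of $Y_\eps$ over disjoint intervals are independent centered, rescaled Poisson variables with variance equal to the interval length. Their characteristic functions $\exp\bigl(\eps^{-1}|I|(e^{i\theta\eps^{1/2}}-1-i\theta\eps^{1/2})\bigr)$ converge to $e^{-\theta^2|I|/2}$.

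For tightness, the centered Poisson increments satisfy the moment bound
\[
\EE\bigl[(Y_\eps(s)-Y_\eps(r))^2(Y_\eps(u)-Y_\eps(s))^2\bigr]\le (s-r)(u-s)\le (u-r)^2 ,\qquad r\le s\le u,
\]
with equality holding exactly, by independence of the two increments. This is Billingsley's criterion (Theorem 13.5) with a continuous limit, and it gives tightness. Alternatively, I could invoke the martingale FCLT, since $Y_\eps$ is a martingale with predictable quadratic variation $s$ and jumps of size $\eps^{1/2}\to0$.

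\emph{Step 3 (continuous mapping).} The map $y\mapsto y\circ F$ from $D[0,1]$ to $D[0,1]$ is continuous at every continuous $y$, because $F$ is continuous and nondecreasing. Indeed, $J_1$ convergence to a continuous limit is uniform convergence, and uniform convergence is preserved under composition. Therefore $\zeta^{F^\eps}=Y_\eps\circ F\to B\circ F$ in law.

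The limit $B(F(x))$ is centered Gaussian with $\EE[B(F(x))B(F(y))]=\min(F(x),F(y))=F(\min(x,y))$. This is exactly the law of $\zeta^F(x)=\xi^F([0,x])$, since $\Cov(\xi^F[0,x],\xi^F[0,y])=\mu([0,\min(x,y)])$. This completes the identification of the limit.

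The main obstacle I expect is Step 1, in justifying the identity $\eps^{-1}F^\eps=N_\eps\circ F$ when $F$ has flat pieces, and then the tightness argument of Step 2. If the time change is awkward, I would instead check Billingsley's moment criterion directly for $\zeta^{F^\eps}$, using the bound $\mu(I_1)\mu(I_2)\le F(u)-F(r)$ squared and the continuity of $F$.
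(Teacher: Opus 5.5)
Your proposal is correct, but it follows a different route from the paper. The paper applies the martingale functional CLT (Jacod--Shiryaev, Theorem VIII.3.11) directly to $x\mapsto\zeta^{F^\epsilon}(x)$. It views this as a martingale in the spatial variable, with deterministic continuous predictable quadratic variation $F(x)$ and jumps of size $\epsilon^{1/2}$, and identifies the limit as the continuous Gaussian martingale with bracket $F$, i.e.\ $B\circ F$. You instead do three things: use the Poisson mapping theorem to get the exact pathwise identity $\zeta^{F^\epsilon}=Y_\epsilon\circ F$, with $Y_\epsilon$ a compensated homogeneous Poisson process; prove Donsker for $Y_\epsilon$ by hand, via characteristic functions and Billingsley's fourth-moment criterion; and transfer the result through $y\mapsto y\circ F$, which is continuous at continuous $y$. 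The paper's argument is a one-line citation that never deals with flat stretches of $F$ or the composition map, but it rests on general semimartingale machinery. Yours is elementary and self-contained and makes the identification with $B\circ F$ immediate. The price is the null-set argument for the flat pieces of $F$, which you handle correctly: the maximal flat intervals are countably many and $\mu$-null. Your fallback removes the time change altogether and is arguably the cleanest version. It applies Billingsley's criterion directly with $F$ in place of the identity. For the increments $\Delta_1,\Delta_2$ of $\zeta^{F^\epsilon}$ over $(r,s]$ and $(s,u]$, one has $\mathbb{E}[\Delta_1^2\Delta_2^2]=\mu((r,s])\,\mu((s,u])\le (F(u)-F(r))^2$.

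Your remark about the setting is a genuine catch and should be kept. Section 5.1 literally takes exactly $\epsilon^{-1}$ i.i.d.\ bids, and in that case the paper's claim that $\zeta^{F^\epsilon}$ is a martingale with bracket $F$ is false. Writing $N(x)=\epsilon^{-1}F^\epsilon(x)$, the conditional expected number of bids in $(x,y]$ given the configuration up to $x$ is $(\epsilon^{-1}-N(x))\frac{F(y)-F(x)}{1-F(x)}$, not $\epsilon^{-1}(F(y)-F(x))$. The limit in that case is the time-changed Brownian bridge, with covariance $F(x\wedge y)-F(x)F(y)$. So both the statement and the paper's proof hold only under the Poisson reading suggested by the introduction and the subsection title, which is exactly the one you adopt.
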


\begin{proof}
For each $\epsilon>0$, the process $\{\zeta^{F^\epsilon}(x):x\in[0,1]\}$ is a  martingale with quadratic variation
\[
\langle\zeta^{F^\epsilon}\rangle(x)=F(x),
\]
which is deterministic and continuous. The jump sizes are $\epsilon^{1/2}\to0$, so the Lindeberg condition holds. By the Functional Central Limit Theorem for Martingales (see Theorem VIII.3.11 in \cite{JacodShiryaev2003}), $\zeta^{F^\epsilon}$ converges in law in $D[0,1]$ to a continuous Gaussian martingale with quadratic variation $F$. This  identifies the  process  $B(F(\cdot))$. 
\end{proof}

\begin{proposition}[Functional CLT for $G^\epsilon$]
\label{prop:G}
Under the same assumptions,
\[
\zeta^{G^\epsilon} \stackrel{\text{law}}{\longrightarrow} \zeta^G \quad \text{in } C[0,1]
\]
with respect to the supremum norm. Hence, $(\zeta^G(x))_{x\in[0,1]}$ is a centered Gaussian process with covariances
\begin{align}
\operatorname{Cov}(\zeta^G(x),\zeta^G(y))
&= \int_0^1 \psi_x(z)\psi_y(z)\,f(z)\,dz \label{eq:cov1}\\
&= \int_0^1 \mathbf{1}_{\{z\le x\wedge y\}} (x-z)(y-z)\,f(z)\,dz, \label{eq:cov2}
\end{align}
where $\psi_x(z):=(x-z)\mathbf{1}_{\{z\le x\}}$.
\end{proposition}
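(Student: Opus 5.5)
The plan is to obtain the statement from Proposition \ref{prop:F} by applying an integration map and using the continuous mapping theorem. First I check that $\zeta^{G^\epsilon}$ is the integral of $\zeta^{F^\epsilon}$. Since $G^\eps(x)=\int_0^x F^\eps(z)\,dz$ and $G(x)=\int_0^x F(z)\,dz$, linearity gives
\[
\zeta^{G^\epsilon}(x)=\int_0^x \zeta^{F^\epsilon}(z)\,dz .
\]
Next consider the map $\Phi: D[0,1]\to C[0,1]$, $\Phi(g)(x):=\int_0^x g(z)\,dz$. Then $\zeta^{G^\epsilon}=\Phi(\zeta^{F^\epsilon})$ and $\zeta^G=\Phi(\zeta^F)$.

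The main step is to show that $\Phi$ is continuous at every continuous path, for the Skorokhod $J_1$ topology on the domain and the sup norm on the target. Let $g_k\to g$ in $J_1$ with $g\in C[0,1]$. Then $g_k\to g$ uniformly, because $J_1$ convergence to a continuous limit is uniform convergence. Hence
\[
\sup_x|\Phi(g_k)(x)-\Phi(g)(x)|\le \|g_k-g\|_\infty\to0 .
\]
By Proposition \ref{prop:F}, the limit $\zeta^F=B(F(\cdot))$ has continuous paths almost surely, because $F$ is continuous when $\mu$ has a density $f$. So the limit law charges only continuity points of $\Phi$, and the continuous mapping theorem gives $\zeta^{G^\epsilon}\to\zeta^G$ in law in $C[0,1]$.

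The next step is to identify the limit. $\zeta^G$ is a linear functional of the Gaussian process $\zeta^F$, so it is centered Gaussian. For the covariance I will write it as a stochastic integral against the white noise. By a Fubini argument for $\xi^F$,
\[
\zeta^G(x)=\int_0^x\xi^F([0,z])\,dz=\int_0^1\Big(\int_0^x \one\{u\le z\}\,dz\Big)\,\xi^F(du)=\int_0^1\psi_x(u)\,\xi^F(du).
\]
The isometry with control measure $\mu(du)=f(u)\,du$ then gives \eqref{eq:cov1}. Substituting $\psi_x$ gives \eqref{eq:cov2}.

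As a cross-check that avoids the white-noise Fubini step, I would also compute directly
\[
\int_0^x\!\int_0^y F(z\wedge z')\,dz'\,dz
\]
and show it equals $\int_0^1 (x-u)^+(y-u)^+ f(u)\,du$. To do this, write $F(z\wedge z')=\int \one\{u\le z\}\one\{u\le z'\}f(u)\,du$ and exchange the order of integration.

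I expect the only delicate point to be the Fubini exchange for $\xi^F$ and the continuity of $\Phi$ at the limit. Both reduce to the fact that $\zeta^F$ has continuous paths, so the direct covariance computation is the safest route to the covariance formula.
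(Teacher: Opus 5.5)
Your proposal is correct and follows the paper's proof. You write $\zeta^{G^\epsilon}=\Phi(\zeta^{F^\epsilon})$ with the integration map $\Phi$, apply the continuous mapping theorem to Proposition~\ref{prop:F}, and get the covariance from the direct computation $\int_0^x\int_0^y F(z\wedge z')\,dz'\,dz=\int_0^{x\wedge y}(x-u)(y-u)f(u)\,du$. The paper asserts that $\Phi$ is continuous on all of $D[0,1]$, which is also true; you check continuity only at continuous paths, which suffices because $\zeta^F$ is a.s. continuous.
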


\begin{proof}
Observe that $\zeta^{G^\epsilon}=\Phi(\zeta^{F^\epsilon})$, where $\Phi:D[0,1]\to C[0,1]$ is the integration map $\Phi(\psi)(x)=\int_0^x\psi(z)\,dz$. The map $\Phi$ is continuous when $D[0,1]$ has the Skorokhod $J_1$ topology and $C[0,1]$ the supremum norm (see Theorem 14.5 in \cite{Billingsley1999}).

From Proposition \ref{prop:F}, $\zeta^{F^\epsilon}\stackrel{\text{law}}{\longrightarrow}\zeta^F$ in $D[0,1]$. By the Continuous Mapping Theorem (Theorem 2.7 in \cite{Billingsley1999}),
\[
\zeta^{G^\epsilon}=\Phi(\zeta^{F^\epsilon})\stackrel{\text{law}}{\longrightarrow}\Phi(\zeta^F)=\zeta^G\quad\text{in }C[0,1].
\]
The covariance formula follows by direct computation:
\[
\operatorname{Cov}(\zeta^G(x),\zeta^G(y))=\int_0^x\int_0^y F(\min(z,w))\,dz\,dw
=\int_0^{x\wedge y}(x-z)(y-z)f(z)\,dz.
\]
\end{proof}

\begin{remark}
Proposition \ref{prop:F} gives the best possible convergence for $\zeta^{F^\epsilon}$ because it has jumps and cannot converge in $C[0,1]$. Proposition \ref{prop:G} strengthens to $C[0,1]$ because $\zeta^{G^\epsilon}$ is continuous (in fact $C^1$) and the integration map regularizes the process.
\end{remark}


\begin{corollary}[Fluctuation field for $\mathcal{G}_t$]
\label{cor:mathcalG}
Define for $x\in[0,1]$ and $t\ge 0$:
\[
\mathcal{G}_t(x) := G(x+t)-G(x-t),\qquad 
\mathcal{G}^\epsilon_t(x) := G^\epsilon(x+t)-G^\epsilon(x-t),
\]
with $G$ and $G^\epsilon$ extended constantly outside $[0,1]$. Let
\[
\zeta^{\mathcal{G}^\epsilon}(x,t) := \epsilon^{-1/2}\big(\mathcal{G}^\epsilon_t(x)-\mathcal{G}_t(x)\big).
\]
Then, as $\epsilon\to0$,
\[
\zeta^{\mathcal{G}^\epsilon}(x,t) \stackrel{\text{law}}{\longrightarrow} \zeta^{\mathcal{G}}(x,t) := \zeta^G(x+t) - \zeta^G(x-t)
\]
in the sense of finite-dimensional convergence (or in $C([0,1]\times[0,T])$ for any fixed $T<\infty$). The limit is a centered Gaussian process with covariance
\[
\operatorname{Cov}\big(\zeta^{\mathcal{G}}(x,s),\zeta^{\mathcal{G}}(y,t)\big) = \int_0^1 \Psi_{x,s}(z)\Psi_{y,t}(z)\,\mu(dz),
\]
where $\Psi_{x,s}(z) := (x+s-z)_+ - (x-s-z)_+$, with $(\cdot)_+=\max(\cdot,0)$.
\end{corollary}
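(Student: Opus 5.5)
The plan is to obtain the corollary from Proposition~\ref{prop:G} by the continuous mapping theorem. For each $t\ge0$ consider the map $\Lambda$ sending a continuous function $g$ on $[0,1]$, extended constantly outside $[0,1]$, to $(x,t)\mapsto g(x+t)-g(x-t)$, viewed as an element of $C([0,1]\times[0,T])$.

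The first step is to check that the linear centering commutes with $\Lambda$. Extend $G$ and $G^\eps$ as constants outside $[0,1]$: by $0$ on the left, since $G(0)=G^\eps(0)=0$, and by their values at $1$ on the right. The centered field $\zeta^{G^\eps}=\eps^{-1/2}(G^\eps-G)$ is then extended constantly in the same way, and it follows that
\[
\zeta^{\mathcal{G}^\eps}(\cdot,\cdot)=\Lambda(\zeta^{G^\eps}).
\]

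Next, $\Lambda$ is continuous, and in fact Lipschitz, from $(C[0,1],\|\cdot\|_\infty)$ to $(C([0,1]\times[0,T]),\|\cdot\|_\infty)$, since
\[
\sup_{x,t}|\Lambda g(x,t)-\Lambda h(x,t)|\le 2\|g-h\|_\infty.
\]
The image is jointly continuous because the constant extension of a continuous function is uniformly continuous on $\RR$.

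Proposition~\ref{prop:G} gives $\zeta^{G^\eps}\to\zeta^G$ in law in $C[0,1]$. The continuous mapping theorem then yields
\[
\zeta^{\mathcal{G}^\eps}\to\Lambda(\zeta^G)=\zeta^G(x+t)-\zeta^G(x-t)
\]
in law in $C([0,1]\times[0,T])$, and this implies finite-dimensional convergence. One small point needs justifying: the extension of $\zeta^G$ beyond $1$ should be $\zeta^G(1)$. This holds because the extension of $G^\eps-G$ beyond $1$ is $G^\eps(1)-G(1)$, so the extension commutes with the limit.

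The last step identifies the covariance. Since $\zeta^G(x)=\int\psi_x\,d\xi^F$, where $\psi_x(z)=(x-z)_+$ and, from Proposition~\ref{prop:G}, $\zeta^G(x)=\int_0^x\xi^F([0,z])\,dz=\int (x-z)_+\,\xi^F(dz)$ by Fubini, linearity gives
\[
\zeta^{\mathcal{G}}(x,s)=\int \Psi_{x,s}\,d\xi^F .
\]
For $x+s>1$ we have $(x+s-z)_+\ne(1-z)_+$ on $[0,1]$. However, the constant extension equals $\zeta^G(1)=\int(1-z)\,d\xi^F$, while the formula in the statement uses $(x+s-z)_+$. The difference is $(x+s-1)\,\xi^F([0,1])$. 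The white noise itself is not centered at the total mass, and $\xi^F([0,1])\ne0$ a priori; I would have to check whether this term genuinely matters.

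For iid bids with $\eps^{-1}$ points, the empirical total mass is exactly $1$, so the true limiting noise has $\xi([0,1])=0$, that is, a Brownian bridge. Taking $\xi^F$ as in the paper, though, the discrepancy is $(x+s-1)\xi^F([0,1])$. I would handle this by restricting to the range where $x\pm s\in[0,1]$, or by noting that the covariance formula holds as stated when $\Psi$ is interpreted with the constant extension, namely $\Psi_{x,s}(z)=(\min(x+s,1)-z)_+-(x-s-z)_+$. The left-hand boundary is harmless, since $(x-s-z)_+=0$ when $x-s\le0$.

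With this identification, the isometry $\Cov(\int\phi\,d\xi^F,\int\chi\,d\xi^F)=\int\phi\chi\,d\mu$ gives the stated covariance. I expect this boundary bookkeeping, rather than the convergence itself, to be the main obstacle.
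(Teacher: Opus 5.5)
Your argument is the paper's: $\zeta^{\mathcal{G}^\epsilon}$ is the image of $\zeta^{G^\epsilon}$ under the $2$-Lipschitz map $g\mapsto g(\cdot+t)-g(\cdot-t)$, the continuous mapping theorem is applied to Proposition~\ref{prop:G}, and the covariance follows by bilinear expansion. Your joint version in $C([0,1]\times[0,T])$ is a slight strengthening of the paper's fixed-$t$ plus finite-dimensional statement.

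Your boundary concern is genuine, and it is exactly what the paper's proof overlooks. The paper inserts $u=x\pm s$, $v=y\pm t$ into $\mathbb{E}[\zeta^G(u)\zeta^G(v)]=\int_0^1(u-z)_+(v-z)_+f(z)\,dz$, but Proposition~\ref{prop:G} gives this only for $u,v\in[0,1]$. Under the constant extension $\zeta^G(u)=\zeta^G(1)$ it fails for $u>1$. So the stated covariance holds only when $x+s\le1$ and $y+t\le1$, or with $\min(x+s,1)$ in $\Psi$ as you propose. For example, with uniform $\mu$, $x=1$ and $s=\tfrac12$, the true variance is $\tfrac16$, while the stated formula gives $\tfrac{19}{24}$.
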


\begin{proof}
By definition we have
\[
\zeta^{\mathcal{G}^\epsilon}(x,t) = \epsilon^{-1/2}\Big(\big[G^\epsilon(x+t)-G(x+t)\big] - \big[G^\epsilon(x-t)-G(x-t)\big]\Big)
= \zeta^{G^\epsilon}(x+t) - \zeta^{G^\epsilon}(x-t).
\]

\noindent
For fixed $t\ge0$, define $\Phi_t : C[0,1] \to C[0,1]$ by
\[
\Phi_t(\psi)(x) := \psi(x+t) - \psi(x-t),
\]
where $\psi$ is extended constantly outside $[0,1]$. The map $\Phi_t$ is continuous with respect to the supremum norm because, for any $\psi_1,\psi_2\in C[0,1]$,
\[
\|\Phi_t(\psi_1)-\Phi_t(\psi_2)\|_\infty \le 2\|\psi_1-\psi_2\|_\infty.
\]

\noindent
From Proposition \ref{prop:G}, $\zeta^{G^\epsilon}\stackrel{\text{law}}{\longrightarrow}\zeta^G$ in $C[0,1]$. By the Continuous Mapping Theorem, for each fixed $t$,
\[
\zeta^{\mathcal{G}^\epsilon}(\cdot,t) = \Phi_t(\zeta^{G^\epsilon}) \stackrel{\text{law}}{\longrightarrow} \Phi_t(\zeta^G) = \zeta^{\mathcal{G}}(\cdot,t) \quad\text{in }C[0,1].
\]
For joint convergence in $(x,t)$, consider any finite collection $(x_i,t_i)$. The map
\[
\psi \mapsto \big( \psi(x_i+t_i)-\psi(x_i-t_i) \big)_{i=1}^k
\]
is continuous from $C[0,1]$ to $\mathbb{R}^k$, so finite-dimensional convergence follows.

\noindent
Since $\zeta^G$ is Gaussian, $\zeta^{\mathcal{G}}$ is Gaussian with mean zero. Its covariance is
\begin{align*}
&\operatorname{Cov}\big(\zeta^{\mathcal{G}}(x,s),\zeta^{\mathcal{G}}(y,t)\big) \\
&= \mathbb{E}\big[ (\zeta^G(x+s)-\zeta^G(x-s)) (\zeta^G(y+t)-\zeta^G(y-t)) \big] \\
&= \mathbb{E}[\zeta^G(x+s)\zeta^G(y+t)] - \mathbb{E}[\zeta^G(x+s)\zeta^G(y-t)] \\
&\quad - \mathbb{E}[\zeta^G(x-s)\zeta^G(y+t)] + \mathbb{E}[\zeta^G(x-s)\zeta^G(y-t)].
\end{align*}
From Proposition \ref{prop:G}, $\mathbb{E}[\zeta^G(u)\zeta^G(v)] = \int_0^1 (u-z)_+ (v-z)_+ f(z)\,dz$. Therefore
\begin{align*}
\operatorname{Cov}\big(\zeta^{\mathcal{G}}(x,s),\zeta^{\mathcal{G}}(y,t)\big)
&= \int_0^1 \Big[ (x+s-z)_+ - (x-s-z)_+ \Big] \Big[ (y+t-z)_+ - (y-t-z)_+ \Big] f(z)\,dz \\
&= \int_0^1 \Psi_{x,s}(z) \Psi_{y,t}(z) \,\mu(dz),
\end{align*}
where $\mu(dz)=f(z)dz$. 
\end{proof}

%

\subsubsection{Central limit theorem  for the max-price}

We now apply the functional Delta Method to obtain the limit distribution of the inverse process.

\begin{proposition}[Delta Method for Quantiles]
\label{thm:quantile}
Assume that for each fixed $t\ge0$, the map $x\mapsto\mathcal{G}_t(x)$ is continuously differentiable and strictly increasing on an interval containing $[0,1]$. Then, for any $y$ in the interior of the range of $\mathcal{G}_t$,
\[
\epsilon^{-1/2}\Bigl( (\mathcal{G}^\epsilon_t)^{-1}(y) - \mathcal{G}_t^{-1}(y) \Bigr) \stackrel{\text{law}}{\longrightarrow} -\,\frac{\zeta^{\mathcal{G}}\bigl(\mathcal{G}_t^{-1}(y), t\bigr)}{\frac{d}{dx}\mathcal{G}_t\bigl(\mathcal{G}_t^{-1}(y)\bigr)}.
\]
\end{proposition}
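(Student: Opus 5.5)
This is the classical functional delta method for the inverse map, applied to the process limit of Corollary \ref{cor:mathcalG} for fixed $t$. Fix $t$ and $y$ in the interior of the range of $\mathcal{G}_t$, and set $x_0:=\mathcal{G}_t^{-1}(y)$ and $g:=\frac{d}{dx}\mathcal{G}_t(x_0)>0$. Write $x_\epsilon:=(\mathcal{G}^\epsilon_t)^{-1}(y)$. This is well defined for $\epsilon$ small, since $\mathcal{G}^\epsilon_t$ is continuous and converges uniformly to a strictly increasing function; if needed, take the generalized inverse $\inf\{x:\mathcal{G}^\epsilon_t(x)\ge y\}$.

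\textbf{Step 1: consistency.} Corollary \ref{cor:mathcalG} (or \eqref{eq:33}) gives $\sup_x|\mathcal{G}^\epsilon_t(x)-\mathcal{G}_t(x)|=O_P(\epsilon^{1/2})$. By strict monotonicity and continuity of $\mathcal{G}_t$ this forces $x_\epsilon\to x_0$ in probability, exactly as in the proof of Theorem \ref{21}. Continuity of $\mathcal{G}^\epsilon_t$ also gives $\mathcal{G}^\epsilon_t(x_\epsilon)=y$ exactly.

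\textbf{Step 2: linearization.} Start from the identity
\[
0=\mathcal{G}^\epsilon_t(x_\epsilon)-\mathcal{G}_t(x_0)=\bigl[\mathcal{G}_t(x_\epsilon)-\mathcal{G}_t(x_0)\bigr]+\epsilon^{1/2}\zeta^{\mathcal{G}^\epsilon}(x_\epsilon,t).
\]
By the mean value theorem and $C^1$-ness, the first bracket equals $\mathcal{G}_t'(\tilde x_\epsilon)(x_\epsilon-x_0)$ with $\tilde x_\epsilon\to x_0$, so $\mathcal{G}_t'(\tilde x_\epsilon)\to g$ in probability. Hence
\[
\epsilon^{-1/2}(x_\epsilon-x_0)=-\frac{\zeta^{\mathcal{G}^\epsilon}(x_\epsilon,t)}{\mathcal{G}_t'(\tilde x_\epsilon)}.
\]

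\textbf{Step 3: replace $x_\epsilon$ by $x_0$ in the fluctuation field.} This is the main obstacle. It requires asymptotic equicontinuity of $\zeta^{\mathcal{G}^\epsilon}(\cdot,t)$, namely $\zeta^{\mathcal{G}^\epsilon}(x_\epsilon,t)-\zeta^{\mathcal{G}^\epsilon}(x_0,t)\to0$ in probability. I would use the process-level statement in Corollary \ref{cor:mathcalG}: $\zeta^{\mathcal{G}^\epsilon}(\cdot,t)=\Phi_t(\zeta^{G^\epsilon})$ converges in law in $C[0,1]$ with sup norm. By Skorokhod representation (or Billingsley's Theorem 7.3 tightness criterion in $C$), the limit $\zeta^{\mathcal{G}}(\cdot,t)$ is continuous and the modulus of continuity of $\zeta^{\mathcal{G}^\epsilon}$ is tight. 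Together with $x_\epsilon\to x_0$ in probability, this gives the replacement. A cleaner alternative is to note directly that $\zeta^{G^\epsilon}$ is Lipschitz with constant $\sup|\zeta^{F^\epsilon}|$, which is tight by Proposition \ref{prop:F}. Then $|\zeta^{\mathcal{G}^\epsilon}(x_\epsilon,t)-\zeta^{\mathcal{G}^\epsilon}(x_0,t)|\le 2\sup|\zeta^{F^\epsilon}|\,|x_\epsilon-x_0|\to0$ in probability. I would use this Lipschitz route, since it avoids the general machinery.

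\textbf{Step 4: conclusion.} $\zeta^{\mathcal{G}^\epsilon}(x_0,t)$ converges in law to $\zeta^{\mathcal{G}}(x_0,t)$ by Corollary \ref{cor:mathcalG}, and $\mathcal{G}_t'(\tilde x_\epsilon)\to g$ in probability. Slutsky's lemma then yields
\[
\epsilon^{-1/2}(x_\epsilon-x_0)\stackrel{\text{law}}{\longrightarrow}-\zeta^{\mathcal{G}}(x_0,t)/g,
\]
which is the claim. Its variance, $\int\Psi_{x_0,t}^2\,d\mu/g^2$, follows from the covariance in Corollary \ref{cor:mathcalG}.
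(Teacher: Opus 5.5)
Your proof is correct, but it takes a different route from the paper. The paper's proof is a one-line citation: it invokes the abstract functional delta method (Theorem 3.9.4 of van der Vaart--Wellner), using Hadamard differentiability of the inversion map $f\mapsto f^{-1}$ at $\mathcal{G}_t$ (their Lemma 3.9.23). It feeds this with the $C[0,1]$ convergence of Corollary~\ref{cor:mathcalG}.

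You instead prove the delta method by hand for this specific map. You establish consistency of $x_\epsilon$, expand $\mathcal{G}_t$ to first order at $x_0$, and then replace $x_\epsilon$ by $x_0$ in the fluctuation field. You handle that last step neatly: $\zeta^{G^\epsilon}=\int_0^{\cdot}\zeta^{F^\epsilon}$ is Lipschitz with constant $\sup|\zeta^{F^\epsilon}|$, and this constant is tight by Proposition~\ref{prop:F}, since the sup norm is $J_1$-continuous.

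The citation route is shorter. Under a positive-derivative-on-an-interval hypothesis it would even give convergence as a process in $y$. However, it leaves the verification of the abstract hypotheses implicit. In particular, the delta method should be applied to $\mathcal{G}^\epsilon_t$ at rate $\epsilon^{-1/2}$, not by applying $f\mapsto f^{-1}$ to $\zeta^{\mathcal{G}^\epsilon}$ itself, as the paper phrases it.

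Your argument is self-contained and uses only differentiability of $\mathcal{G}_t$ at $x_0$; the mean value theorem can be replaced by $\mathcal{G}_t(x_\epsilon)-\mathcal{G}_t(x_0)=(g+o_P(1))(x_\epsilon-x_0)$. It also yields the asymptotically linear representation $\epsilon^{-1/2}(x_\epsilon-x_0)=-\zeta^{\mathcal{G}^\epsilon}(x_0,t)/g+o_P(1)$. Combined with the joint finite-dimensional convergence in Corollary~\ref{cor:mathcalG}, this immediately gives joint convergence at several times. That is what the covariance formula in Theorem~\ref{cor:qprocess} actually needs, and the paper's fixed-$t$ argument does not literally provide it.

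One caveat applies to both arguments: $C^1$ plus strict monotonicity does not force $g>0$ (consider $x^3$ at $0$). So $g>0$ is an extra assumption. You assert it without comment, while the paper adds it as a proviso inside its proof.
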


\begin{proof}
This is a direct application of Theorem 3.9.4 in \cite{vdVaartWellner1996}, which states that if $\mathbb{G}_n \xrightarrow{\text{law}} \mathbb{G}$ in a normed space and $\phi$ is Hadamard differentiable at the limit point, then $\phi(\mathbb{G}_n) \xrightarrow{\text{law}} \phi'_{\mathbb{G}}(\mathbb{G})$.

Here we take $\mathbb{G}_n = \zeta^{\mathcal{G}^\epsilon}(\cdot, t)$ (as a process in $x$) and $\phi$ the inverse map $\phi(f) = f^{-1}$. For a fixed $t$, the map $f \mapsto f^{-1}$ is Hadamard differentiable at $\mathcal{G}_t$ with derivative
\[
\phi'_{\mathcal{G}_t}(h)(y) = -\,\frac{h(\mathcal{G}_t^{-1}(y))}{\frac{d}{dx}\mathcal{G}_t(\mathcal{G}_t^{-1}(y))},
\]
provided $\frac{d}{dx}\mathcal{G}_t > 0$ (see Lemma 3.9.23 in \cite{vdVaartWellner1996}). The result follows by applying the functional Delta Method to $\zeta^{\mathcal{G}^\epsilon}(\cdot, t) \xrightarrow{\text{law}} \zeta^{\mathcal{G}}(\cdot, t)$. 
\end{proof}

\begin{theorem}
\label{cor:qprocess}
Under the assumptions of Theorem~\ref{thm:quantile},
\[
\zeta^{q,\epsilon}_t := \epsilon^{-1/2}\bigl(q_t^\epsilon - q_t\bigr) \stackrel{\text{law}}{\longrightarrow} \zeta^q_t := -\,\frac{\zeta^{\mathcal{G}}\bigl(q_t, t\bigr)}{\frac{d}{dx}\mathcal{G}_t(q_t)}.
\]
Since $\frac{d}{dx}\mathcal{G}_t(x) = G'(x+t) - G'(x-t) = F(x+t) - F(x-t)$, we have
\[
\frac{d}{dx}\mathcal{G}_t(q_t) = F(q_t+t) - F(q_t-t) =: c_t.
\]
Thus
\[
\zeta^q_t = -\,\frac{\zeta^{\mathcal{G}}(q_t, t)}{c_t}.
\]
The process $\{\zeta^q_t : t\ge0\}$ is centered Gaussian with covariance
\[
\operatorname{Cov}(\zeta^q_t, \zeta^q_s) = \frac{1}{c_t c_s} \int_0^1 \Psi_{q_t,t}(z) \Psi_{q_s,s}(z) \,\mu(dz),
\]
where $\Psi_{x,t}(z) = (x+t-z)_+ - (x-t-z)_+$ and $\mu(dz)=f(z)dz$.
\end{theorem}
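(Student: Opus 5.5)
The plan is to read Theorem~\ref{cor:qprocess} as the specialization of Proposition~\ref{thm:quantile} to the level $y=2pt$. By Theorem~\ref{TeoEx}, applied once to $\mu$ and once to $\mu^\eps$ (for a.e.\ realization, as in the proof of Theorem~\ref{21}), $q_t=\G_t^{-1}(2pt)$ and $q^\eps_t=(\G^\eps_t)^{-1}(2pt)$ as long as $q_t\in(0,1)$. Hence
\[
\zeta^{q,\eps}_t=\eps^{-1/2}\bigl((\G^\eps_t)^{-1}(2pt)-\G_t^{-1}(2pt)\bigr).
\]
Under the hypothesis that $\G_t$ is $C^1$ and strictly increasing, $2pt$ lies in the interior of its range. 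Proposition~\ref{thm:quantile} with $y=2pt$ then gives the one-dimensional limit $-\zeta^{\G}(q_t,t)/\partial_x\G_t(q_t)$.

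The derivative is computed as in Section~\ref{Cont}. Since $\mu$ has density $f$, $F$ is continuous, so $\partial_+\G_t=\partial_-\G_t=F(x+t)-F(x-t)$. Evaluated at $q_t$ this is $c_t$, which is positive by the Lemma preceding Theorem~\ref{TeoEx}. This gives the stated formula $\zeta^q_t=-\zeta^\G(q_t,t)/c_t$.

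For the process statement I would proceed in two steps.

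\textbf{Finite-dimensional distributions.} Fix $0\le s_1<\dots<s_k$. The key point is that Proposition~\ref{thm:quantile} comes from the single process $\zeta^{G^\eps}\to\zeta^G$ in $C[0,1]$ (Proposition~\ref{prop:G}). I would therefore apply the functional delta method jointly to the map
\[
\psi\mapsto\bigl((\G_{s_i}+\psi(\cdot+s_i)-\psi(\cdot-s_i))^{-1}(2ps_i)\bigr)_{i}\in\RR^k .
\]
This map is Hadamard differentiable at $\psi=G$, being a finite product of compositions of the continuous linear map $\Phi_{s_i}$ with the inverse map. Its derivative is $h\mapsto(-h_{s_i}(q_{s_i})/c_{s_i})_i$ with $h_{s}=\Phi_s h$. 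The vector $(\zeta^{q,\eps}_{s_i})_i$ therefore converges jointly to $(-\zeta^\G(q_{s_i},s_i)/c_{s_i})_i$.

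\textbf{Gaussianity and covariance.} The limit is a continuous linear functional of the centered Gaussian process $\zeta^G$, so it is centered Gaussian. Its covariance is read off from Corollary~\ref{cor:mathcalG} at the points $(q_t,t),(q_s,s)$:
\[
\Cov(\zeta^q_t,\zeta^q_s)=\frac{1}{c_tc_s}\int_0^1\Psi_{q_t,t}(z)\Psi_{q_s,s}(z)\,\mu(dz).
\]
The two minus signs cancel in this product.

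The main obstacle is justifying the joint (multi-time) delta method. Proposition~\ref{thm:quantile} is stated only for fixed $t$, while the time-dependent family $\G^\eps_t$ varies with $t$ both through its argument and through the level $2pt$. The reduction above handles this, because everything is a functional of one $C[0,1]$-valued process.

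A secondary issue is the boundary. If $q_t\in\{0,1\}$ then $q_t$ is clamped by \eqref{22}, and so is $q^\eps_t$ with high probability. In that case the scaled difference tends to $0$. I would restrict the statement to times with $q_t\in(0,1)$, which is implicit in the hypothesis that $2pt$ is interior to the range. Tightness in path space is not claimed by the statement, so I would stop at finite-dimensional convergence.
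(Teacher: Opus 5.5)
Your proposal is correct and follows the paper's proof: specialize Proposition~\ref{thm:quantile} to $y=2pt$, identify $\frac{d}{dx}\G_t(q_t)=c_t$ via $G'=F$, and read the covariance off Corollary~\ref{cor:mathcalG} with the factor $1/(c_tc_s)$. The paper only establishes the single-time limit and then states the two-time covariance without proving joint convergence, so your joint delta-method step on the single process $\zeta^{G^\eps}$ (composing $\Phi_{s_i}$ with the inverse map) fills a gap the paper leaves open; just take $s_1>0$, since at $t=0$ one has $\G_0\equiv 0$ and $c_0=0$.
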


\begin{proof}
We now  specialize Theorem~\ref{thm:quantile}  to $y = 2pt$, where $p\in[0,1]$ is fixed. Define
\[
q_t := \mathcal{G}_t^{-1}(2pt), \qquad q_t^\epsilon := (\mathcal{G}^\epsilon_t)^{-1}(2pt).
\]

\[
\epsilon^{-1/2}(q_t^\epsilon - q_t) \stackrel{\text{law}}{\longrightarrow} -\,\frac{\zeta^{\mathcal{G}}(q_t, t)}{\frac{d}{dx}\mathcal{G}_t(q_t)}.
\]
The expression for the derivative follows from $\mathcal{G}_t(x)=G(x+t)-G(x-t)$ and $G'=F$. The covariance follows from Corollary~\ref{cor:mathcalG}:
\[
\operatorname{Cov}(\zeta^{\mathcal{G}}(q_t,t), \zeta^{\mathcal{G}}(q_s,s)) = \int_0^1 \Psi_{q_t,t}(z) \Psi_{q_s,s}(z) \,\mu(dz).
\]
Scaling by $1/(c_t c_s)$ gives the covariance of $\zeta^q_t$. 
\end{proof}

%

\section{Multi-velocity model}
\label{s6}
We generalize the discrete model by accepting different individual updating policies. Each agent $i$ is associated to the usual initial bid $b_i$ and  a ``velocity'' $v_i\in\mathbb{N}$. In the discrete case, after each round, agent $i$ adds/subtracts $\gamma v_i$ to its previous bid, if it was winner/loser in the previous round. Our main result here is that the max-price velocity is proportional to the harmonic mean of the velocities associated to the bids at the max-price.

We define  a discrete version of the model and prove the result when there are only two possible velocities,  then define a continuous version, and prove the convergence of the discrete model to the continuous one. 

\subsection{Discrete-time multi-velocity model}
Let $n$ be the number of bids, $\gamma$ be the learning parameter and $p\in [0,1]$ such that $np\in \NN$, the winner fraction. If the bid $(b,v)$ wins, then it is updated to $(b+\gamma v,v)$; if the bidder loses, it is updated to $(b-\gamma v,v)$. If  $v$ equals $1$ for all $b$, we get the previous model.

Let $B=[(b_1,v_1), \ldots, (b_n,v_n)]$ be the initial bid-velocity multi-set, assuming ordered $b_i\in I^\gamma$ and $v_i>0$. 
The single time-step evolution  ${T}^{\gamma} B$ of $B$  is defined as follows. Let
\begin{align}
  \label{tb1}
  b'_i&:= b_i+\gamma v_i \bigl(\one\{0<i\le np\} - \one\{np<i\le n\}\bigr) , \qquad i=1, \ldots, n
\end{align}
Let $\sigma=(\sigma_1, \ldots, \sigma_n)$ be a permutation of $\{1, \ldots, n\}$  that satisfies $b'_{\sigma_1}\le\dots\le  b'_{\sigma_n}$. Define 
\begin{align}
  \label{6}
  {{T}}^{\gamma} B := [(b'_{\sigma_1}, v_{\sigma_1})\ldots, (b'_{\sigma_n}, v_{\sigma_n})].
\end{align}
Each bidder keeps its original velocity parameter after the reordering.
The max-price associated to $B$ is  $q(B)= b_{np}$.
The evolution at the $\kappa$-th time-step is given by $(\rmT^{\gamma})^\kappa B$,  and  the max-price evolution is given by $q\bigl((\rmT^{\gamma})^\kappa B)$.

\paragraph{Velocity of the max-price assuming periodicity} We will assume that there is an initial multi-set that is translated after a finite number of steps, meaning the motion is periodic. 
The next result says that in this case the displacement of the max-price at the end of the period is proportional to the harmonic mean of the velocities.

\begin{proposition}[Max-price velocity is a harmonic mean of velocities]\label{i_vel}
Consider the evolution \eqref{6} and assume it is {\em periodic}, in the sense  that there exist $N\in\mathbb{Z},\kappa\in\mathbb{N}$ such that, for any initial configuration $B$,
\begin{align}
(\rmT^{\gamma})^\kappa(B) &=B+N\gamma:= [(b_1+N\gamma, v_1),\dots, (b_n+N\gamma, v_n)].
   \end{align}
Then we have
\begin{equation} \label{vel}
\frac{N}{\kappa}=\frac{(w-\ell)}{\sum_{i=1}^n \frac{1}{v_i}}.
\end{equation}
\end{proposition}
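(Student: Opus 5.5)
The plan is a counting argument on the individual bidders over one period. Fix a bidder $i$ and let $W_i$ and $L_i$ be the number of rounds, among the $\kappa$ rounds of the period, in which bidder $i$ wins and loses respectively. Then $W_i+L_i=\kappa$. Each win moves bidder $i$ up by $\gamma v_i$ and each loss moves it down by $\gamma v_i$, and velocities are attached to bidders and never change. So the net displacement of bidder $i$ over the period is $\gamma v_i (W_i-L_i)$.

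By the periodicity hypothesis, after $\kappa$ steps the multiset of pairs $(b,v)$ is translated by $N\gamma$. I will identify bidders so that each one is itself displaced by exactly $N\gamma$. Bidders carrying the same pair $(b,v)$ are indistinguishable, so a relabelling within equal pairs is harmless. A cleaner route is to sum the identity below over all bidders with a given pair, which avoids any tracking argument. The per-bidder identity is
\begin{equation*}
v_i (W_i-L_i) = N, \qquad\text{so}\qquad W_i-L_i=\frac{N}{v_i}.
\end{equation*}

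Next I sum over $i$. In each round exactly $w=np$ bidders win and $\ell=n-w$ lose, so
\begin{equation*}
\sum_{i=1}^n (W_i-L_i)=\kappa(w-\ell).
\end{equation*}
Combining the two displays gives $N\sum_i 1/v_i=\kappa(w-\ell)$, which rearranges to \eqref{vel}. The sign convention $N\in\mathbb{Z}$ takes care of the cases $w<\ell$ and $w>\ell$ automatically.

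The main obstacle is justifying that each individual bidder, and not only the multiset, is shifted by $N\gamma$. My first approach is to work with the classes $C_{(b,v)}$ of bidders sharing an initial pair. The translated multiset contains, for each pair $(b+N\gamma,v)$, exactly $|C_{(b,v)}|$ bidders of velocity $v$. However, a bidder starting at $b$ could end at $b'+N\gamma$ for some other $b'$ with the same $v$. To handle this, I will sum the displacement identity over all bidders of a fixed velocity $v$. The total displacement of that group equals $N\gamma$ times its size, and it also equals $\gamma v\sum(W_i-L_i)$ over the group. This gives $\sum_{i: v_i=v}(W_i-L_i)=N\,|\{i:v_i=v\}|/v$ without needing any individual matching. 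Summing over $v$ then yields $\sum_i (W_i-L_i)=N\sum_i 1/v_i$, and the rest of the argument is unchanged.
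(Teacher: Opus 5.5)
Your argument is correct, and its core is the paper's: count each bidder's wins $W_i$ and losses $L_i$ over one period, and use that exactly $w$ bidders win in every round. The difference lies in how you use the translation hypothesis, and here your version is stronger than the paper's.

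The paper asserts $(w_i(\kappa)-\ell_i(\kappa))v_i=N$ for every single bidder, solves for $w_i(\kappa),\ell_i(\kappa)$ individually, and then sums. You use only the fact that, for each velocity $v$, the multiset of positions of the velocity-$v$ bidders is shifted by $N\gamma$. That gives $\sum_{i:v_i=v}(W_i-L_i)=N\,|\{i:v_i=v\}|/v$ without matching individual bidders.

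This is not just cleaner; it is needed. The hypothesis is a statement about multisets, and a period can permute bidders. Take all $v_i=1$, $w=1$, $\ell=3$, and bids at relative sites $0,\gamma,\gamma,2\gamma$; this is the halved-period case of Lemma~\ref{pe4}. Two steps give $[0,0,\gamma,\gamma]$ and then $[-\gamma,0,0,\gamma]$, which is the initial configuration shifted by $-\gamma$. So $\kappa=2$ and $N=-1$. But every bidder's displacement over two steps lies in $\{-2\gamma,0,2\gamma\}$, so no bidder moves by $N\gamma$. The paper's formula would give $w_i(\kappa)=\tfrac12$. The conclusion $N/\kappa=-\tfrac12=(w-\ell)/\sum_i 1/v_i$ still holds, exactly as your summed identity predicts.

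Accordingly, drop two things from your second paragraph, since both are false in general: the per-bidder identity $v_i(W_i-L_i)=N$, and the claim that relabelling within equal pairs is harmless. Keep only the class-summed argument of your last paragraph.
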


\begin{proof}
Denote by $w_i(\kappa)$ and $\ell_i(\kappa)$ the number of times the $i$-th bidder respectively wins and loses the auction in the first $\kappa$ steps.
In order for the hypothesis to be satisfied, the following identities must hold for the $i-th$ bidder:
\begin{equation}
\begin{cases}
(w_i(\kappa)-\ell_i(\kappa))v_i=N \\
w_i(\kappa)+\ell_i(\kappa)=\kappa.
\end{cases}
\end{equation}
Solving the system for $w_i$ and $\ell_i$ we get
\begin{equation}
w_i(\kappa)=\frac{1}{2}\left(\kappa+\frac{N}{v_i}\right), \qquad
\ell_i(\kappa)=\frac{1}{2}\left(\kappa-\frac{N}{v_i}\right).
\end{equation}
As in each of the $\kappa$ steps $w$ bidders win and $\ell$ lose, we have that overall it must hold
\begin{align}
\kappa w=\sum_{i=1}^n w_i(\kappa)=\sum_{i=1}^n \frac{1}{2}\left(\kappa+\frac{N}{v_i}\right) = \frac{n\kappa}{2}+\frac{N}{2}\sum_{i=1}^n \frac{1}{v_i}
\\
\kappa\ell=\sum_{i=1}^n \ell_i(\kappa)=\sum_{i=1}^n \frac{1}{2}\left(\kappa-\frac{N}{v_i}\right) = \frac{n\kappa}{2}-\frac{N}{2}\sum_{i=1}^n \frac{1}{v_i}.
\end{align}
Subtracting the previous two equations we finally obtain
\[
(w-\ell)\kappa=N\sum_{i=1}^n \frac{1}{v_i} \implies
\frac{N}{\kappa}=\frac{(w-\ell)}{\sum_{i=1}^n \frac{1}{v_i
}}.\qedhere
\]
\end{proof}

\begin{remark}\rm
Defining the harmonic mean  of the actual velocities $v_i$ by
\begin{align}
  h:= \frac{n}{\sum_{i=1}^n \frac{1}{v_i}}.
\end{align}
the equation \eqref{vel} can be written as
\begin{equation}
\frac{N}{\kappa}
= \frac{w-\ell}{w+\ell} \cdot h;
\end{equation}
we can see that the velocity at which the bidders are travelling is given by the velocity that they would have if   all the velocity parameters were set to $1$ multiplied by the harmonic velocity $h$. 
\end{remark}

\vskip.3cm
\begin{lemma} \label{group}
\noindent
\begin{itemize}
\item[i)] Two bidders $b$ and $b'$ with the same velocity parameter $v$  and initial distance $c > 0$ will always stay at a distance of at most $ \max \left\lbrace c, 2\gamma v \right\rbrace $.
\item[ii)] Let $v_{\max}:=\max\{v_1, \ldots, v_n\}$  and suppose that there is a  bidder that has velocity parameter $v_{\max}$ that is distant at most $2\gamma v_{\max}$ from the max-price. Then this bidder  will always stay at a distance of at most $2\gamma v_{\max}$ from the max-price.
\end{itemize}
\end{lemma}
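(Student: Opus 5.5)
The plan is a one-step invariance argument for both items: assume the claimed distance bound holds at some time step, follow one application of $\rmT^{\gamma}$, and show it still holds afterwards. The key structural fact from \eqref{tb1}–\eqref{6} is that the outcome of a bidder depends only on its rank. Bidders ranked at most $np$ win, the rest lose, and the max-price is the $np$-th ordered bid. In particular, a bid strictly below $q$ is a winner, a bid strictly above $q$ is a loser, and ties at $q$ can go either way.

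\emph{Item i).} Let $b\le b'$ have the same velocity $v$ and let $d=b'-b$. There are three cases.
\begin{itemize}
\item If both win or both lose, they move by the same amount $\pm\gamma v$, so $d$ is unchanged.
\item If $b$ wins and $b'$ loses, the new distance is $|d-2\gamma v|$. This is at most $\max\{d,2\gamma v\}$, since $|d-2\gamma v|\le d$ when $d\ge \gamma v$, and $|d-2\gamma v|\le 2\gamma v$ otherwise.
\item The remaining configuration, $b$ loses while $b'$ wins, requires $b'$ to be ranked weakly below $b$. Since $b\le b'$, this forces $d=0$, and then the new distance is $2\gamma v$.
\end{itemize}
So after one step the distance is at most $\max\{d,2\gamma v\}$. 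If we start with $d\le c$, induction over time steps gives the bound $\max\{c,2\gamma v\}$ for all times. The same bound on the distance at the next step is all that is needed; the labels of the two bidders may swap.

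\emph{Item ii).} Let $b$ have velocity $v_{\max}$ and satisfy $|b-q|\le 2\gamma v_{\max}$. I would treat the case $b\le q$; the other case is symmetric. If $b<q$, then $b$ wins and $b\mapsto b+\gamma v_{\max}$. If $b=q$, it can go either way.

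The new max-price $q'$ is the $np$-th ordered bid among the updated bids. I plan to bound $q'$ using the fact that every bidder moves by at most $\gamma v_{\max}$. On one side, the $np$ winners all end at or below $q+\gamma v_{\max}$, so $q'\le q+\gamma v_{\max}$. On the other side, the bids at or above $q$ that lose end at or above $q-\gamma v_{\max}$, so $q'\ge q-\gamma v_{\max}$.

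Combining these with the position of $b$:
\begin{itemize}
\item If $b$ wins, then $b+\gamma v_{\max}\in[q-\gamma v_{\max},q+\gamma v_{\max}]$ and $q'\in[q-\gamma v_{\max},q+\gamma v_{\max}]$. Both points lie in an interval of length $2\gamma v_{\max}$, so their distance is at most $2\gamma v_{\max}$.
\item If $b=q$ and $b$ loses, then $b'=q-\gamma v_{\max}$, and again both $b'$ and $q'$ lie in $[q-\gamma v_{\max},q+\gamma v_{\max}]$.
\end{itemize}
Induction then gives the bound for all times.

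The main obstacle is the lower bound $q'\ge q-\gamma v_{\max}$, which needs care with ties and multiplicities. I would argue through the counting function: the number of updated bids below $q-\gamma v_{\max}$ is at most the number of original bids strictly below $q$, and this count is less than $np$ by the definition of $q$ as the $np$-th order statistic.

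The boundary clipping at $0$ and $1$ also needs checking, if it is present. Clipping only moves bids toward the interior by smaller amounts, so I would verify that the bounds above survive it.
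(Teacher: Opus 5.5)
Your proposal is correct and follows essentially the same one-step invariance argument as the paper. In (i) the distance is unchanged unless the two bidders lie on opposite sides of the max-price. In (ii) you combine $|q'-q|\le \gamma v_{\max}$ with the fact that the fast bidder moves by exactly $\gamma v_{\max}$ toward the max-price, or stays within that band if it sits at $q$. Your rank-based handling of ties at $q$ and your order-statistic count for $q'\ge q-\gamma v_{\max}$ fill in points the paper only asserts.
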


\begin{proof}
We will show that, in both cases, the bound on the distance stays true after one step. Then, by induction it follows that  the bound remains true at all times. 
\begin{itemize}
\item[i)] If the two bidders are both above or below the max-price then at the following step they will obviously keep the same distance.  If one of the two bidders is above and the other is below the max-price, then  the one below moves up by $\gamma v$ and the one above moves down by $\gamma v$. If $c\ge 2\gamma v$ the distance then decreases by $\gamma v$, otherwise at the following step they will have exchanged their relative order, but, since each of them moves by $v\gamma$, their distance will still be less than $2v\gamma$.
\item[ii)] First of all we notice that at each step the max-price cannot change by more than $\gamma v_\text{max}$. Suppose without loss of generality that the bidder is below the max-price, then it is a winner and then it  will move up by $\gamma v_\text{max}$.  As a consequence, the difference between  the max-price and the bidder position (which is positive) cannot increase, and, given the bound on the max-price velocity, also its absolute value will be less than $2\gamma v_\text{max}$.
\end{itemize}
\end{proof}

\subsection{Two velocities}
\label{s62}

Let $B=[(b_1,v_1),\dots,(b_n,v_n)]$ be a multi-set with $b_i \in I^\gamma$, and assume that there are only two velocities, $v_i\in\{\nua,\nub\}$, with $\nub> \nua>0$. Denote $B_\pm=[(b_i,v_i): v_i =\nu_{\pm}]$, so that $B=B_+\cup B_-$. Let $n_\pm:=|B_\pm|$, the cardinality of the multi-sets. 
Denote by $n= \na+\nb$ the total number of bidders, and by $w$ and $\ell$  the total number of winners and losers, respectively. 

We show  now that if  the bidders  have periodic increments,  then they travel with speed
\begin{equation}\label{harmo}
\frac{(w-\ell)}{n}\cdot h, \quad h:= \frac{n}{ \frac{\na}{\nua}+\frac{\nb}{\nub} }.
\end{equation}
Notice that $h$ is the harmonic mean of the bid-velocities.

In the following proposition we show that the system can attain periodic increments if and only if the absolute value of the global velocity \eqref{harmo} is less or equal than $\nua$.

\begin{proposition}\label{Sep} Let $\nua<\nub$ and consider $B=B_+\cup B_-$. 
 Then  the   two statements below are equivalent:
\begin{itemize}
\item  the motion attains a periodic increment state, in the sense that  there exist $k_0, K, N \in \mathbb N$ such that
\begin{equation}
  (\rmT^{\gamma})^{k_0+K} B= (\rmT^{\gamma})^{k_0} B+\gamma N,
\end{equation}
\item moreover
\begin{equation} \label{2per_con}
  \frac{|w-\ell|}{n}\cdot h
 \le \nua.
\end{equation}
\end{itemize}
\end{proposition}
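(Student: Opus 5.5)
The proof has two directions. The forward direction reuses the counting of Proposition \ref{i_vel}; the converse needs a dynamical argument.

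\emph{Periodic increments imply \eqref{2per_con}.} Assume $(\rmT^{\gamma})^{k_0+K}B=(\rmT^{\gamma})^{k_0}B+\gamma N$ and apply the counting argument of Proposition \ref{i_vel} to the configuration $(\rmT^{\gamma})^{k_0}B$. For each bidder $i$, let $w_i(K)$ and $\ell_i(K)$ be the number of wins and losses in one period. Then
\[
(w_i(K)-\ell_i(K))v_i=N,\qquad w_i(K)+\ell_i(K)=K .
\]
Summing the wins over all bidders gives $N/K=(w-\ell)h/n$, as in the Remark. The extra constraint is that wins and losses are nonnegative: $w_i(K),\ell_i(K)\ge 0$ forces $|N|/v_i\le K$ for every $i$. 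The binding case is the slowest velocity, so $|N|/K\le \nua$. Substituting the value of $N/K$ gives exactly \eqref{2per_con}. The physical reading is that a slow bidder which keeps pace with the max-price moves at most $\nua$ per step, while the common translation speed is the harmonic expression.

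\emph{\eqref{2per_con} implies periodic increments.} The plan has four steps.

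\begin{itemize}
\item[(a)] Show that after finitely many steps every bidder lies within $2\gamma\nu_\pm$ of the max-price. Bidders not yet near the max-price move ballistically toward it at speed at least $\nua$. The max-price cannot outrun a whole velocity class indefinitely, because the winners/losers count at each step is fixed at $w$ and $\ell$. Lemma \ref{group}(ii) then keeps the fast class attached, and Lemma \ref{group}(i) keeps each class internally of bounded width.
\item[(b)] Conclude that the relative configuration, meaning the positions modulo a common translation by $\gamma$, ranges over a finite set. The dynamics is deterministic and translation-equivariant, so the relative configuration is eventually periodic. This gives $k_0,K$ and an integer $N$ with the required identity.
\item[(c)] Use \eqref{2per_con} to justify the attachment in (a) for the slow class.
\item[(d)] Handle the boundary cases $0,1$ as in Lemma \ref{pe4}(b): the configuration freezes, which gives $N=0$, consistent with \eqref{2per_con}.
\end{itemize}

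\emph{Main obstacle.} The hard step is (c), showing that the slow bidders stay within bounded distance of the max-price. Lemma \ref{group}(ii) only covers $v_{\max}$. My plan is a contradiction argument. Suppose the slow class eventually separates from the max-price, say remaining strictly below it. Then every slow bidder wins at each step, so the fast class must supply the rest of the winning and losing. Apply the periodicity argument of (b) to the fast class alone, with modified counts $w-\na$ and $\ell$. Proposition \ref{i_vel} then gives the fast class (and hence the max-price) asymptotic speed
\[
\nub\,\frac{w-\na-\ell}{\nb}.
\]
Separation requires this speed to be at least $\nua$ (with the appropriate sign), and I expect an algebraic computation to show this contradicts \eqref{2per_con}. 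This is the step where I am least sure the inequality closes, in particular in the equality case of \eqref{2per_con}. I would check it first against the example of Figure \ref{2velpas}.
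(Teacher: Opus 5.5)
\textbf{Forward direction.} Your argument that periodic increments imply \eqref{2per_con} is correct, and it takes a different and cleaner route than the paper. The paper argues the contrapositive dynamically: if \eqref{2per_con} fails, the fast barycenter outruns the slow class and the two groups separate. Your counting argument avoids that. It needs two small repairs.
\begin{itemize}
\item The multiset identity only says the configuration is translated. Bidders with equal velocity may be permuted over a period, so $(w_i(K)-\ell_i(K))v_i=N$ need not hold bidder by bidder. Averaging it over the slow class, or iterating the period until the permutation is trivial, still gives $|N|\le K\nu_-$.
\item Drop item (d). The dynamics \eqref{tb1} has no boundary. If you imposed the clamping of Lemma \ref{pe4}(b), freezing would give $N=0$ even when \eqref{2per_con} fails, contradicting the direction you just proved.
\end{itemize}

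\textbf{The gap is in the converse.} Your algebra does close. Since $h=n/(n_-/\nu_-+n_+/\nu_+)$, for $w\ge\ell$ condition \eqref{2per_con} is equivalent to $\nu_+(w-n_--\ell)/n_+\le\nu_-$, with equality matching equality. However, the contradiction argument in (c) cannot deliver what (b) needs, for two reasons.
\begin{itemize}
\item It only excludes \emph{permanent} one-sided separation. Its negation, that the slow class meets the max-price infinitely often, says nothing about how far the slow class drifts during the finite excursions in between. So you get no uniform bound on the diameter, which is exactly what makes the set of relative configurations finite.
\item In the equality case there is no contradiction, and (a) is false. Take $\nu_-=1$, $\nu_+=2$, $n_-=2$, $n_+=4$, $w=5$, $\ell=1$, where \eqref{2per_con} holds with equality. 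If the slow pair starts $D\gamma$ below the fast class, it wins forever and moves at speed $1$, exactly as the fast barycenter does (the fast class has $3$ winners and $1$ loser). So it stays at distance of order $D\gamma$ and never comes within $2\gamma\nu_-$ of the max-price, yet the motion is eventually periodic.
\end{itemize}
The target should therefore be a bounded diameter, not attachment.

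\textbf{The missing idea} is a per-excursion comparison, which is how the paper argues.
\begin{itemize}
\item By Lemma \ref{group}(i), each velocity class has bounded spread at all times.
\item By Lemma \ref{group}(ii), fast bids stay within bounded distance of the max-price. This needs one extra observation: a fast bid farther than $2\gamma\nu_+$ from the max-price cannot get farther, since it moves toward it by $\gamma\nu_+$ while the max-price moves by at most $\gamma\nu_+$. So only the distance between the slow class and the max-price must be controlled.
\item When some slow bids win and some lose, the max-price lies between them, so this distance is bounded by the slow spread.
\item During an excursion in which all slow bids win (the all-losing case is symmetric, using $|w-\ell|$), the fast class evolves autonomously with $w-n_-$ winners and $\ell$ losers. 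Its barycenter rises by exactly $\gamma\nu_+(w-n_--\ell)/n_+\le\gamma\nu_-$ per step, by the equivalence above. Meanwhile every slow bid rises by exactly $\gamma\nu_-$.
\item Hence the gap between the lowest slow bid and the top of the fast class never exceeds its value at the start of the excursion plus the fast spread. That starting value is bounded, because one step earlier some slow bid was on the other side of the max-price, or else it is the initial configuration.
\end{itemize}
This gives $\sup_k \gamma^{-1}\max_{b,b'\in(\rmT^{\gamma})^kB}|b-b'|<\infty$, equality case included, with no appeal to periodicity of the fast class. Your step (b) then concludes.
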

\begin{proof}
{\bf Step I.} We first show that condition \eqref{2per_con} implies periodicity. To this end we prove that, if \eqref{2per_con} holds true, then  at all times,  the maximum distance between all the bids remains bounded by $c\gamma$ for some constant $c>0$, namely
\begin{equation} \label{intermezzo}
\sup_{k \in \mathbb N}\;\sup_{b,b'\in (T^{\gamma})^{ k}B} \frac 1 \gamma |b-b'| < c \;.
\end{equation}
As a consequence,  their relative configuration must eventually repeat within a finite time, and therefore there is a time-step such that the configuration starting from it has a motion which is necessarily periodic.

Abusing notation, define:
\begin{equation*}
C:=\max_{b,b'\in  B} \frac 1 \gamma |b-b'| 
\qquad
C_+:=\max_{b,b'\in B_+} \frac 1 \gamma |b-b'| 
\qquad
C_-:=\max_{b,b'\in B_-} \frac 1 \gamma |b-b'| 
\end{equation*}

To prove the bound on the distance between the bids in equation \eqref{intermezzo},
 it is sufficient to prove that each bid remains within a distance of order $\gamma$ from the max-price. For what concerns the fast bids with velocity $\nub$, we know from item ii) of Lemma \ref{group} with $v_{\max} = \nub$ that they remain at a distance of at most $\max \{2\nub \gamma, C\gamma\}$ from the max-price.  We are left to prove the claim for the bidders with velocity equal to $\nua$.

 If at a given time-step some of these slow bidders are winning and some others are losing, the max-price must necessarily be between  the lowest $b_l$ and  the highest $b_h$ of them. Therefore,  thanks to item i) of Lemma \ref{group},  the distance between these two bidders is bounded by $ \max \left\lbrace C_-\gamma ,2\nua\gamma \right\rbrace $ at all times.  As a consequence,   all the other slow bidders remain within the same distance from the max-price.
At all these timesteps the maximum distance between the bidders is then bounded by $c_1=\max \{2\nub \gamma, C\gamma\}+\max \left\lbrace C_-\gamma ,2\nua\gamma \right\rbrace$.

We now define an excursion as a collection of consecutive timesteps where all the slow bidders are winners (or losers).
At this point it is sufficient to control how far the bidders can go from the max-price during each excursion.
Consider now the first timestep of an excursion where all the slow bidders are winners and call $x$ the price bid by the lowest of the slow bidders and $y$ the price bid by the highest fast bidders. We then have that after $i$ timesteps of the excursion the value $x+i\nua\gamma$ is a lower bound for the price bid by the slow bidders. Considering now the fast bidders we know that for all the duration of the excursion exactly $w-n_a$ of them will be winners and $\ell$ losers at each timestep, and therefore the position of their barycenter will increase of $\frac{w-n_a-\ell}{n_b}\nub\gamma$. Observing that at the initial step of the excursion the position of the barycenter can be at most $y$ and that at each step each fast bidder differs by at most $\max \{2\nub \gamma, C_+\gamma\}$ from the barycenter we obtain that after $i$ timesteps of the excursion the value $y+\max \{2\nub \gamma, C_+\gamma\}+i\frac{w-n_a-\ell}{n_b}\nub\gamma$ is an upper bound for the price bid by the fast bidders. Condition \eqref{2per_con} implies that $\frac{w-n_a-\ell}{n_b}\nub\gamma\le\nua$ and as a consequence during the excursion the maximum distance between the bidders is bounded by $y-x+\max \{2\nub \gamma, C_+\gamma\}$. To conclude we now have to show that we have a bound on $y-x$, and that is true because either the first step of the excursion is the starting configuration, and thus the distance is bounded by $C\gamma$ or at the previous step at least one of the slow bidders was a loser, which implies that at the first step of the excursion we have $y-x\le \max \left\lbrace C_-\gamma-\nua\gamma, \nua\gamma \right\rbrace+ \max \left\lbrace C\gamma-\nub\gamma, \nub\gamma \right\rbrace$

The case in which all the slow bidders are losers can be analysed  in a similar way. This concludes the proof of {\em Step I}.

{\bf Step II.} Now we prove that if \eqref{2per_con} is not satisfied then  the two groups of bidders will separate (see Figure \ref{figura7}) and  the motion is not periodic. Let us suppose that $w>l$, as the proof for the other case is analogous. Consider   the barycenter of fast  bidders,  since at least $w-n_a$ of them are winning, it follows that their barycenter increment at each time-steps is at least $\frac{w-n_a-\ell}{n_b}\nub\gamma$ which by hypothesis is greater than $\nua\gamma$. As a consequence after a finite number of steps all the slow bidders will be below the max-price and move with velocity $\nua$ increasing their distance from the fast bidders and preventing any periodicity.
\end{proof}

\begin{figure}[th]
  \centering
  \includegraphics[width= .7\textwidth]{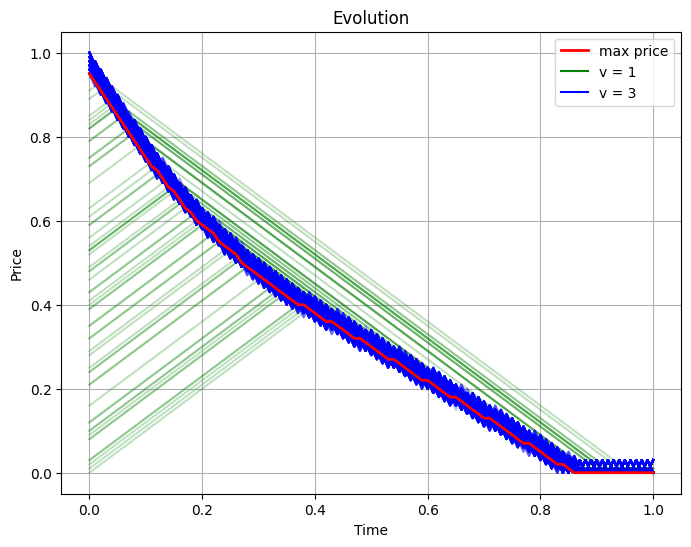}
  \caption{Two different velocities: $\nu_{-}=1$ for $n_- = 60$ bids initially distributed uniformly at random in $[0, 0.95]$ and $\nu_{+}=3$ for $n_+ = 240$ bids initially distributed uniformly at random in $[0.95, 1]$ with $p=0.3$ and $\gamma = 0.01$.  }\label{figura7}
  \label{2velpas}
\end{figure}

\begin{remark} Notice that, if  condition  \eqref{2per_con} is not satisfied, the evolution is not globally periodic. Nevertheless, in any case, the bids in the sets $B_-$ and  $B_+$, taken separately,  perform a periodic motion in the sense of  Lemma \ref{pe4}.
\end{remark}

\begin{theorem}[Ballistic price evolution] \label{ball1}
Let $p\in(0,1)$ and consider a collection of bid-velocity multi-sets  $\{B^{\gamma}: \gamma \in (0,1) \}$, each with only two velocities $\nu_-,\nu_+$, satisfying the following condition:
\begin{equation} \label{ottantuno}
\sup_{\gamma \in (0,1)}\sup_{b,b'\in  B^{\gamma}} \frac 1 \gamma |b-b'| <\infty.
\end{equation}
Moreover suppose that the initial discrete max-price $q_0^\gamma:= q(B^{\gamma})$ is such that
\begin{equation}\label{q0}
 \sup_{\gamma \in (0,1)}  \,  \frac 1 \gamma|  q_0^\gamma - q_0|<\infty
\end{equation}
for some  $q_0\in[0,1]$. Recall the harmonic velocity 
\begin{align}
  h=h(\nu_-,\nu_+,n_-,n_+) := \displaystyle{\frac{n}{\frac{\na}{\nua}+\frac{\nb}{\nub}}}.
\end{align}
Let $\rmT^{\gamma}_tB^\gamma:= (\rmT^\gamma)^{\lfloor t/\gamma\rfloor}B^\gamma$, see \eqref{6}, and $q^\gamma_t:= q\bigl(\rmT^{\gamma}_tB^\gamma)$. 
 Then,
   \begin{equation}\label{qt}
     \sup_{\gamma \in (0,1)}  \sup_{t\ge 0} \,  \frac 1 \gamma|  q_{t}^\gamma - q_t |<\infty,
  \end{equation}
  where
  \begin{align}
         q_t :=
        \begin{cases} 
          q_0 + \displaystyle{\frac{(w-\ell)}{n}\,h\,t} \quad
          & \text{if} \quad \displaystyle{\frac{|w-\ell|}{n}\,h\le \nua}
        \\[3mm]
          q_0 + \displaystyle{\frac{w-\ell-\na\sign(w-\ell)}{\nb}\,\nub t} \quad
          & \text{if} \quad \displaystyle{\frac{|w-\ell|}{n}\,h> \nua}
        \end{cases} \;.
  \end{align}
Moreover  denote by  $b_i^{\gamma}(t)$,  the $i$-th bid-component of the ordered multi-set $T^{\gamma}_tB^\gamma$.
Then
\begin{equation}\label{ping}
\sup_{\gamma\in (0,1)} \sup_{t\ge 0} \frac 1 \gamma |b_i^{\gamma}(t)-b_i(t)|<\infty \qquad \text{for all } i=1, \ldots, n
\end{equation}
with
 \begin{align}\label{pong}
     b_i(t) :=
        \begin{cases} 
          q_t \quad
          & \text{if} \quad \displaystyle{\frac{|w-\ell|}{n}\,h\le \nua} \: \text{ or } \: v_i =\nub
        \\[3mm]
          q^0 +   \sign(w-\ell) \nua t \quad
          & \text{if} \quad \displaystyle{\frac{|w-\ell|}{n}\,h> \nua} \: \text{ and } \: v_i =\nua
        \end{cases} \;.
  \end{align}
\end{theorem}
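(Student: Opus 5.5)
Throughout, $n_\pm,\nu_\pm,w,\ell$ are fixed and independent of $\gamma$; only the initial bid positions depend on $\gamma$. Assumption \eqref{ottantuno} then gives a constant $C_0$ with $|b-b'|\le C_0\gamma$ for all $b,b'\in B^\gamma$, uniformly in $\gamma$. Combined with \eqref{q0}, every initial bid lies within $O(\gamma)$ of $q_0$. The plan is to transport the uniform-in-time estimates obtained in the proof of Proposition \ref{Sep} to the rescaled time $t$. The key structural fact is that all $O(\gamma)$ constants there depend only on $C,C_\pm,\nu_\pm$ and not on the number of steps. Write $k=\lfloor t/\gamma\rfloor$. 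The difference between $q^\gamma_t$ and the value at $k\gamma$ of the linear function is then automatically $O(\gamma)$, since $|q_t-q_{k\gamma}|\le \nub\gamma$.

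\emph{Case $\frac{|w-\ell|}{n}h\le\nua$.} By Step I of the proof of Proposition \ref{Sep}, the spread of the configuration $(\rmT^\gamma)^kB^\gamma$ is at most $c\gamma$ for all $k$, with $c$ uniform in $\gamma$. I use the weighted barycenter
\[
\beta_k:=\frac{\sum_i b_i/v_i}{\sum_i 1/v_i},
\]
computed over the configuration at step $k$. At every step exactly $w$ bids move by $+\gamma v_i$ and $\ell$ bids by $-\gamma v_i$. Hence $\sum_i b_i/v_i$ increases by exactly $\gamma(w-\ell)$ at each step, regardless of who wins. (This is the mechanism behind Proposition \ref{i_vel}, but here it holds without any periodicity assumption.) So
\[
\beta_k=\beta_0+\gamma k\,\frac{w-\ell}{\sum_i 1/v_i}=\beta_0+\gamma k\,\frac{w-\ell}{n}h.
\]
Since all bids and the max-price lie within $c\gamma$ of each other, $|q^\gamma_{k\gamma}-\beta_k|\le c\gamma$. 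Together with $|\beta_0-q_0|=O(\gamma)$ from \eqref{q0}, this gives \eqref{qt}. Then \eqref{ping} follows because every $b_i^\gamma(t)$ is within $c\gamma$ of $q^\gamma_t$.

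\emph{Case $\frac{|w-\ell|}{n}h>\nua$,} say $w>\ell$. I follow Step II of the proof of Proposition \ref{Sep}. The fast barycenter grows by at least $\frac{w-\na-\ell}{\nb}\nub\gamma>\nua\gamma$ per step, so after a number $k_0$ of steps that is bounded uniformly in $\gamma$ (the initial spread is $O(\gamma)$), all slow bids lie below the max-price and stay below it forever. Two consequences follow.

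First, each slow bid moves by exactly $+\nua\gamma$ per step from $k_0$ on. Its position at step $k$ is therefore $q_0+\nua k\gamma+O(\gamma)$, which gives the second line of \eqref{pong}.

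Second, the fast bids form a closed subsystem from step $k_0$ on. Exactly $w-\na$ of them win and $\ell$ lose at each step, and all of them have the same velocity, so they behave like a single-velocity auction. Their barycenter moves by exactly $\frac{w-\na-\ell}{\nb}\nub\gamma$ per step. By Lemma \ref{group}\,ii) (or the three-site argument of Lemma \ref{pe4}), their spread stays $O(\gamma)$, and the max-price, which is one of them, stays within $O(\gamma)$ of their barycenter. This yields the second line of the formula for $q_t$ and the first line of \eqref{pong} for fast bids.

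\emph{Main obstacle.} The main difficulty is uniformity. I must check that the constants in Step I (the spread bound $c$) and the entrance time $k_0$ in Step II depend only on $\sup_\gamma C/\gamma$, $\nu_\pm$ and $n_\pm$. The Step I argument controls the slow–fast gap through excursions, and its bound on $y-x$ at the start of each excursion uses only these quantities, so I expect this to go through.

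A separate, more delicate point is the boundary $\{0,1\}$. Once $q_t$ reaches $0$ or $1$, the dynamics are truncated. I would either restrict $t$ to the time before $q_t$ exits $(0,1)$ (interpreting $q_t$ as clipped, as in Theorem \ref{TeoEx}), or invoke Lemma \ref{pe4}\,b) to show that the configuration freezes within $O(\gamma)$ of the boundary. The latter preserves \eqref{qt} provided $q_t$ is understood as truncated.
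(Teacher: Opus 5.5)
Your proposal is correct. Case 2 follows the paper's skeleton, but Case 1 takes a genuinely different route.

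\emph{The paper's route.} In Case 1 the paper invokes Proposition \ref{Sep} to obtain eventual periodicity,
$(T^\gamma)^{k_0^\gamma+K^\gamma}B^\gamma=(T^\gamma)^{k_0^\gamma}B^\gamma+\gamma N^\gamma$.
It then reduces the supremum over $t\ge 0$ to the first $k_0^\gamma+K^\gamma$ steps. Finally it bounds $k_0^\gamma$ and $K^\gamma$ uniformly in $\gamma$ by noting that, under \eqref{ottantuno}, the centred configurations $\Pi B^\gamma$ range over a finite set. Case 2 is handled the same way after a separation time $\bar k^\gamma$, with the fast subsystem again assumed to become periodic and the same finiteness argument giving $\sup_\gamma \bar k^\gamma<\infty$.

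\emph{Your route.} You observe that $\sum_i b_i/v_i$ changes by exactly $\gamma(w-\ell)$ at every step, whoever wins. So the $1/v_i$-weighted barycenter moves exactly linearly with slope $\frac{w-\ell}{n}h$. Being a convex combination of the bids, it lies within the spread of the configuration. You therefore need only the uniform spread bound from Step I of Proposition \ref{Sep} to pin $q^\gamma_t$ to it. In Case 2 you use the ordinary barycenter of the fast bids, which after separation moves exactly at rate $\frac{w-\ell-n_-}{n_+}\nu_+$. You bound the separation time quantitatively: the fast barycenter gains at least $\delta\gamma$ per step on every slow bid, where $\delta=\frac{w-\ell-n_-}{n_+}\nu_+-\nu_->0$. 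Combined with the fast bids staying $O(\gamma)$-close to the max-price (Lemma \ref{group}), this gives a separation time of order $1/\delta$ steps times the normalized initial spread.

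\emph{What each buys.} Your route needs no periodicity and no finiteness-of-configurations argument, and it gives explicit constants. It also makes explicit a point the paper's reduction to one period uses only tacitly: that the periodic drift $N^\gamma/K^\gamma$ equals the slope of $q_t$, i.e.\ the content of Proposition \ref{i_vel}. Indeed, your identity shows the harmonic-mean velocity is an exact per-step conservation law, not just a period average. The paper's route, in exchange, yields the extra structural fact that the discrete motion is eventually periodic.

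\emph{The boundary.} Your worry about $\{0,1\}$ is unnecessary here. The multi-velocity dynamics \eqref{tb1}--\eqref{6} has no truncation at the boundary, and the limit $q_t$ in the statement is an unclipped linear function. Your conservation law therefore holds at every step for all $t\ge 0$. Restricting $t$ or clipping $q_t$ would only weaken or alter the statement.
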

\begin{proof}
{\bf Case 1)} First consider the case in which the condition \eqref{2per_con} is satisfied.  Using Proposition \ref{Sep} we have that for each $\gamma$ exist $k_0^{\gamma}, K^{\gamma}, N^{\gamma} \in \mathbb N$ such that
\begin{equation}
(T^\gamma)^{k_0^{\gamma}+K^{\gamma}} B^{\gamma}= (T^\gamma)^{k_0^{\gamma}} B^{\gamma}+\gamma N^{\gamma},
\end{equation}
as a consequence  we have that 
\begin{equation}
\sup_{t\ge 0} \,  \frac 1 \gamma|  q_t^\gamma - q_t |
=\sup_{0\le t\le k_0^{\gamma}+K^{\gamma}} \,  \frac 1 \gamma|  q_t^\gamma - q_t |.
\end{equation}

\noindent
From \eqref{ottantuno}
 we know that there exists $c>0$ such that
  \begin{equation}\label{d}
\sup_{\gamma \in (0,1)}\sup_{b,b'\in B^{\gamma}} \frac 1 \gamma |b-b'| \le c.
  \end{equation}
We have that  there exists a finite number of configurations satisfying this condition.

 For any fixed configuration  $B \in \{B^\gamma: \, \gamma \in (0,1)\}$, we define the related {\em centred configuration}, i.e. the collection of  bid distances from the max-price divided by $\gamma$ together with their velocities: 
\begin{equation}
\Pi B :=\left( \left( \tfrac 1 {\gamma} \left(b_1-q_0^{\gamma}\right), v_1\right), \ldots, \left( \tfrac 1 {\gamma} \left(b_n-q_0^{\gamma}\right), v_n\right) \right) 
\end{equation}
then we have that the values  $k_0^{\gamma}$ and $k^{\gamma}$ are uniquely  determined the centred initial configuration. This means that for each couple of configurations $B^{\gamma_1}$ and $B^{\gamma_2}$ such that $\Pi B^{\gamma_1}=\Pi B^{\gamma_2}$  one has that $k_0^{\gamma_1}= k_0^{\gamma_2}$ and $k^{\gamma_1}=k^{\gamma_2}$.

\noindent
Since  there exists a finite number of possible  configurations satisfying  \eqref{d}, we have that the projection $\Pi$ maps  the set  $\{B^{\gamma} :\gamma \in (0,1) \}$  to  a finite set. 
As a consequence we have that
     $\sup_{\gamma \in (0,1)} k_0^{\gamma}< \infty$ and $\sup_{\gamma \in (0,1)} k^{\gamma}< \infty$.

From the fact that  both $q_t^\gamma$ and  $q_t$ travel at  velocity $\le \nub$ in absolute value,
we have that
\begin{eqnarray}
  \sup_{0\le t\le (k_0^{\gamma}+K^{\gamma})} \,  \frac 1 \gamma|  q_t^\gamma - q_t | & \le &  \frac 1 \gamma|  q_{0}^\gamma - q_0 |+ 2 \nub (k_0^{\gamma}+K^{\gamma})
\end{eqnarray}
and then, from the hypothesis \eqref{q0} we can conclude \eqref{qt}. 
For what concerns the bids we have that
\begin{equation}\label{Qquo}
\sup_{\gamma\in (0,1)} \sup_{ t \ge 0 } \frac 1 \gamma |b_i^{\gamma}(t)-b_i(t)| \le
\sup_{\gamma\in (0,1)} \sup_{t \ge 0 } \frac 1 \gamma \left( |b_i^{\gamma}(t)-q_t^\gamma| + | q_t^\gamma-q_t|\right).
\end{equation}
The r.h.s. of the bound above is bounded as in  \eqref{qt}. Whereas
\begin{equation}\label{Qqui}
\sup_{\gamma\in (0,1)} \sup_{t \ge 0} \frac 1 \gamma  |b_i^{\gamma}(t)-q_t^\gamma| <\infty
\end{equation}
follows from the considerations done in the proof of {\em Step I} of Proposition \ref{Sep}.

\noindent
{\bf  Case 2)} In case condition \eqref{2per_con} is not satisfied, there exists a finite number of time-steps, say  $\bar k^{\gamma}$, such that  slow particles are strictly below or strictly above the max-price 
for all times $t \ge \gamma \bar k^{\gamma}$. For the same kind of reasoning used in the proof of Case 1) we can argue that $\sup_{\gamma \in (0,1)} \bar k^{\gamma}<\infty$. 
For $t \ge \gamma \bar k^{\gamma}$  slow particles do not interact anymore with the evolution of the max-price. 
And then, after a certain number of steps $k_0^{\gamma}>\bar k^{\gamma}$, fast particles organize themselves in a periodic  motion of period $k^{\gamma}$, and determine the evolution of the max-price.  Then for fast particles  we can repeat the same argument used for the proof of Case 1). This concludes the proof of \eqref{qt}.

\noindent
For what concerns the bids, we have that, if $b_i$ is a fast bid, i.e. if $v_i =\nub$,  using \eqref{Qquo} we have that, once proven \eqref{qt} it remain to show \eqref{Qqui}. From item ii) of Lemma \ref{group} we know that, even in this case, fast bids  remain close to the interface throughout the dynamics and then \eqref{Qqui} holds true.

\noindent 
For  what concerns slow bids, we know that, for all  $t \ge \gamma \bar k^{\gamma}$, they are strictly below or strictly above $q_t$ and travel at velocity $\sign(w-\ell) \nua$.
Since from the hypothesis at time zero we know that
\begin{equation}
\sup_{\gamma\in (0,1)}\frac 1 \gamma  |b_i^{\gamma}-q_0| <\infty
\end{equation}
and, since 
\begin{equation}
\sup_{\gamma\in (0,1)}\sup_{t \in [0, \gamma \bar k^{\gamma}]}\frac 1 \gamma  |b_i^{\gamma}(t)-q_0| <\infty
\end{equation}
and, as a consequence, also
\begin{equation}
\sup_{\gamma\in (0,1)}\sup_{t \ge \gamma \bar k^{\gamma}}\frac 1 \gamma  |b_i^{\gamma}(t)-q_0- \sign(w-\ell)\nua t| <\infty
\end{equation}
then, since  $\sup_{\gamma \in (0,1)} \bar k^{\gamma}<\infty$, it follows that
\begin{equation}
\sup_{\gamma\in (0,1)}\sup_{t \ge 0}\frac 1 \gamma  |b_i^{\gamma}(t)-q_0- \sign(w-\ell)\nua t| <\infty.
\end{equation}
This concludes the proof.
\end{proof}

\subsection{Space-time continuous multi-velocity model}
Here we discuss a continuous version of the multi-velocity model with initial sum of delta measures.

Let $B=[(b_1,v_1),\dots,(b_n,v_n)]$ be a multi-set with ordered elements $b_i\in[0,1]$ and $v_i \in \mathbb R^+$.
Denote by $\mu= \sum_{i=1}^n \frac1n\delta_{(b_i,v_i)}$ the measure that assigns a mass $1/n$ to each bid. The cumulative mass function $F$ associated to $B$ is given by
\begin{align}
  F(x)&=\frac1n\sum_{i=1}^n \one\{b_i \le x\}
\end{align}
For simplicity we consider only two velocities, $v_i\in\{\nua,\nub\}$ with $\nua<\nub$, and we denote by $B^\pm:= [(b,v)\in B: v= \nu^\pm] $; $B=B^+\cup B^-$. 
As usual we denote by $p\in(0,1)$ the parameter determining the fraction of mass that is sold and 
recall $q_0=F^{-1}(p)$, the max-price at time zero. 

We will define a system of equations for the max-price $q_t$ and bid dynamics 
\[
B_t:=[(b_1(t),v_1), \ldots, (b_n(t), v_n)]
,\]
 where $b_i(t)$ is the reordered position at time $t$ of the $i$-th bid. Each bid conserves its original velocity parameter throughout the dynamics. 
The cumulative mass, the max-price and the winning and losing masses at the max-price at time $t$ are denoted by
\begin{gather}
F_t(x):= \frac1n\sum_{i=1}^n  \one \{b_i(t)\le x\}\\
 q_t :=F^{-1}_t(p),\qquad w_t=p-F_t(q_t^-),\qquad 
\ell_t=F_t(q_t)-p
\end{gather}
Consider the system of equations \eqref{eq:h_t}-\eqref{eq:init_cond} for $(q_t,B_t)_{t\ge 0}$: 
\begin{align}\label{eq:h_t}
  h_t&= \frac{w_t+\ell_t}{\frac1n\sum_{i=1}^n \frac{1}{v_i}\one\{b_i(t)=q_t\}}\\[2mm]
  u_t &=
       \begin{cases}
         \displaystyle{\frac{w_t-\ell_t}{w_t+\ell_t} \cdot h_t},
         &\displaystyle{\Big|\frac{w_t-\ell_t}{w_t+\ell_t} \cdot h_t\Big|\le \nua} \\
         &\vspace{-1mm}\\
         \displaystyle{\frac{w_t-\ell_t-\sign(w_t-\ell_t)\, m^-_t}
         {m^+_t}\cdot \nub}
         &\displaystyle{\Big|\frac{w_t-\ell_t}{w_t+\ell_t} \cdot h_t \Big| > \nua}
       \end{cases} \label{eq:u_t_def}\\[2mm]
  m^{\pm}_t&=\frac1n\sum_{i=1}^n \one\{b_i(t)=q_t\} \one\{v_i=\nu_{\pm}\} \label{eq:m_pm}\\[2mm]
  \frac{dq_t}{dt}&= u_t  \,  {\cdot \one\{q_t\in(0,1)\}  } \label{eq:dq_dt}\\[2mm]
 \frac {d b_i(t) }{dt}\,  &=
   \begin{cases}
     +v_i& \text{for }   \; b_i(t)<q_t\\
     u_t&  \text{for }   \; b_i(t)=q_t \text{ and } |u_t|\le v_i\\
          \sign (u_t)v_i &  \text{for }   \; b_i(t)=q_t \text{ and } |u_t|> v_i\\
     -v_i&  \text{for }   \; b_i(t)>q_t\, .
   \end{cases}\label{eq:db_dt}\\[2mm]
  (q_0, B_0)&=(b_{np}, B),\qquad \text{(initial condition)}.\label{eq:init_cond}
\end{align}

We notice that $h_t$ is the {harmonic average} of the velocities of bids at the max-price.

Define the set of collision times, i.e. times at which a bid hits the max-price curve, 
\begin{equation}\label{Coll}
\cC:=\left\{t\ge 0: \:   \exists \, i\in\{1, \ldots,n\} \: \text{s.t. } b_i(t^-) \neq q_t \;\text{and } b_i(t)=q_t\right\}.
\end{equation}
We remark that outside collision times, if there are slow particles on $q_t$, then $|u_t|\le \nua$, if instead there are only fast particles on $q_t$ then there can not be separations.
As a consequence, for $t\notin \cC$, 
 the max-price and bid velocities satisfy the simplified system of equations \eqref{eq:dq_dt_simplified}-\eqref{eq:db_dt_simplified},
 \begin{align}\label{eq:dq_dt_simplified}
   \frac{dq_t}{dt}&= u_t  \, {\cdot \one\{q_t\in(0,1)\}  }
 \\[2mm]
u_t&=\frac{w_t-\ell_t}{w_t+\ell_t} \cdot h_t\,,\qquad t\notin \cC,\\[2mm]
 \frac {d b_i(t) }{dt}\,  &=
   \begin{cases}
     +v_i& \text{for }   \; b_i(t)<q_t\\
     u_t&  \text{for }   \; b_i(t)=q_t \\
     -v_i&  \text{for }   \; b_i(t)>q_t\, 
   \end{cases},\qquad t\notin \cC.\label{eq:db_dt_simplified}
\end{align}
 In other words, outside collision times, the max-price curve has a time-derivative that is equal to the one that it would have in the single-velocity case times the harmonic-average function. Bidders that are on $q_t$ travel along the max-price curve.
Whereas bidders that are not on the max-price curve, travel ballistically at velocity $\pm v_i$ depending on whether they are above or below $q_t$. 
All this holds up to the next collision-time.

On the other hand, when the $i$-th bidder is on the max-price curve there are two possibilities depending on the value of $u_t$ with respect to $v_i$: (a) if $v_i\ge |u_t|$ the bidder will keep traveling together with the max-price, taking its velocity up to the next collision-time, and (b) if $v_i< |u_t|$ the bidder velocity is instantaneously deviated from the max-price and assumes the value $\sign (u_t)v_i$. The bidder then {\em separates} from the max-price and travels ballistically up to its next collision.

\vskip.3cm

\begin{lemma}\label{finite}
The total number of collisions is finite, i.e. $|\cC|<\infty$.
\end{lemma}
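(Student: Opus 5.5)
We plan to bound the number of collisions in $\cC$ by counting, for each bid, how many times it can hit the max-price curve. Between collision times the dynamics \eqref{eq:dq_dt_simplified}--\eqref{eq:db_dt_simplified} is ballistic: $q_t$ is linear and each free bid moves at constant velocity $\pm v_i$. Hence on any interval without collisions, a free bid either stays at a constant sign relative to $q_t$ or reaches it at a single time. So it suffices to show that each bid $i$ hits $q_t$ only finitely many times, and that collisions cannot accumulate at a finite time.

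\textbf{Step 1 (fast bids never leave).} Fast bids never detach from $q_t$. Indeed, $|u_t|\le \nub$ always: in the first branch of \eqref{eq:u_t_def} we have $|u_t|\le\nua$, and in the second branch the coefficient of $\nub$ has modulus at most $1$, because $m^+_t+m^-_t=w_t+\ell_t$ and $|w_t-\ell_t|\le w_t+\ell_t$. So by \eqref{eq:db_dt} a fast bid sitting at $q_t$ keeps velocity $u_t$. Every fast bid therefore collides at most once, which gives at most $n_+$ fast collisions.

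\textbf{Step 2 (slow bids detach at most once, in a fixed direction).} A slow bid detaches only when $|u_t|>\nua$. After detaching it moves at $\sign(u_t)\nua$. It can re-collide only if $q_t$ later turns back or slows below $\nua$ in that direction. The key monotonicity fact to establish is that the signed quantity $w_t-\ell_t$ keeps its sign. Initially, bids below $q_0$ are winners. A bid arriving at $q_t$ from below adds to $w_t$, and one arriving from above adds to $\ell_t$. Since $w_t+\ell_t$ only grows up to time $T$, after which all mass sits at $q_t$, we will show that a separation with $u_t>\nua$ (say) forces the detached slow mass to travel at $\nua$ while the fast mass left at $q_t$ has $w_t-\ell_t-m^-_t>\nua m^+_t/\nub$, and this inequality persists. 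Subsequent collisions involve only bids that have not yet reached $q_t$. Each such bid comes from above (adding to $\ell_t$) or from below (adding to $w_t$), so there are at most $n$ such additions. Thus $u_t$ changes only at the finitely many first-hitting times, and between them $q_t$ and every detached slow bid move linearly. Two linear trajectories meet at most once per interval, so each detached slow bid re-collides at most once per interval. This gives at most $O(n)$ re-collisions per slow bid.

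\textbf{Step 3 (no accumulation).} Let $t_1<t_2<\dots$ be the first-hitting times of the $n$ bids; there are at most $n$ of them. On each interval $[t_k,t_{k+1})$ the velocity $u_t$ can change only through detach/reattach events of slow bids. We argue that on such an interval a detach/reattach cycle cannot repeat. A slow bid reattaches only if $|u_t|\le\nua$ after it rejoins. But once it has rejoined, the configuration at $q_t$ is identical to the one before detaching, which had $|u|>\nua$, a contradiction. Hence on each interval, once the slow bids detach, they stay detached. Collisions are then bounded by roughly $n(n+1)$.

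The main obstacle is Step 3: making rigorous that a detached slow bid cannot rejoin within the same interval, and handling simultaneous collisions at a single time. We expect to resolve this by checking directly from \eqref{eq:u_t_def} that $u_t$ is monotone in the direction of separation: adding mass to $q_t$ from the side toward which it moves can only slow it, while adding mass from the opposite side is impossible once the opposite-side bids of that velocity have all been absorbed.
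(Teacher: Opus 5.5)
Your Step 1 is fine and is exactly the first half of the paper's proof: $|u_t|\le\nub$, so fast bids are absorbed at their first collision. Steps 2 and 3, however, rest on claims that are false.

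\emph{Step 2.} The ``key monotonicity fact'' that $w_t-\ell_t$ keeps its sign fails, because every bid reaching $q_t$ from above increases $\ell_t$. Already with all velocities equal, distinct bids and $p<1/2$ give $w_0-\ell_0=1/n>0$, while $w_t-\ell_t$ eventually equals $2p-1<0$. For the same reason the separation inequality $w_t-\ell_t-m^-_t>\nua m^+_t/\nub$ does not persist once a fast loser arrives. Nor does $u_t$ change only at first-hitting times: a detached slow bid that rejoins $q_t$ changes $w_t$ (or $\ell_t$) and the harmonic weight, hence $u_t$. So between first-hitting times $q_t$ is only piecewise linear, with kinks exactly at the re-collisions you want to count, and the ``two lines meet at most once'' bound is circular.

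\emph{Step 3.} The claim that after rejoining the configuration at $q_t$ is the one it had before detaching ignores the other (re-)arrivals in between, which are precisely what make rejoining possible. A previously detached slow bid above $q_t$ that re-hits while $u_t>\nua$ passes straight through. It can then catch up from below later in the same interval, after another re-hit has pulled $u_t$ under $\nua$. Your closing heuristic is also off: fast bids can reach $q_t$ from either side (winners from behind whenever $u_t<\nub$), and it is exactly these arrivals that push $|u_t|$ above $\nua$.

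The missing idea is that the first branch of \eqref{eq:u_t_def} moves by convex combination. With $S_t:=\frac1n\sum_{i:\,b_i(t)=q_t}1/v_i$ one has $u_t=(w_t-\ell_t)/S_t$. A bid of velocity $v$ arriving from below (above) changes $w_t-\ell_t$ by $+1/n$ ($-1/n$) and $S_t$ by $1/(nv)$, so the new value is
\[
\frac{S_t\,u_t\pm \frac{1}{nv}\,v}{S_t+\frac{1}{nv}},
\]
a convex combination of $u_t$ and $\pm v$. Two consequences follow.

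If slow bids are attached ($|u_t|\le\nua$), a slow arrival keeps $|u|\le\nua$. So attached slow bids can be detached only at a collision of a fast bid.

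If no slow bid is attached and, say, $u_t>\nua$, then slow bids below recede. A slow bid from above either attaches, and then stays until the next fast collision, or passes through. After a pass-through the fast-only velocity $(w_t-\ell_t)\nub/m^+_t$ still exceeds $\nua$ (this is exactly the branch condition), so the bid cannot pass through again.

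Hence each slow bid has at most two collisions between consecutive fast collisions, which is the paper's argument. You should partition time by the at most $n_+$ fast collisions, not by first-hitting times. This gives $|\cC|\le n_+ + 2n_-(n_++1)$ for $n_+$ fast and $n_-$ slow bids.
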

\begin{proof}
Each time one of the fast bidders has a collision, it is absorbed at the max-price forever. For what concerns the slow bidders, each of them can have at most two collisions during each interval between collisions involving fast bidders. This concludes the proof.
\end{proof}
Lemma \ref{finite} implies that the dynamics is well defined, as stated in the following proposition.
\begin{proposition}
  \label{4}
Given $p\in[0,1]$ and an initial bid-velocity multi-set $B$, the system \eqref{eq:h_t}-\eqref{eq:init_cond} has a unique solution, denoted $\cT_tB$ with max-price $q_t$ for each $t\ge 0$.
\end{proposition}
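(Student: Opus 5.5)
We construct the solution piecewise between consecutive collision times, in the spirit of the proof of the theorem on evolution of sums of delta measures, and use Lemma \ref{finite} to guarantee that finitely many pieces cover $[0,\infty)$.

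\textbf{Step 1: a single piece.} Fix a time $s$ with a configuration $B_s$ and the max-price $q_s=F_s^{-1}(p)$. From these data compute the masses $w_s,\ell_s,m^\pm_s$, the harmonic average $h_s$ and the velocity $u_s$ via \eqref{eq:h_t}--\eqref{eq:m_pm}; all are explicit functions of $B_s$. Then \eqref{eq:db_dt} assigns to every bid a constant velocity:
\begin{itemize}
\item bids strictly below $q_s$ move at $+v_i$;
\item bids strictly above $q_s$ move at $-v_i$;
\item bids on $q_s$ move at $u_s$ if $|u_s|\le v_i$, and at $\sign(u_s)v_i$ otherwise.
\end{itemize}
Bids of the last type separate at once, so right after $s$ the set of bids on $q_t$ is fixed. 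Recomputing $h,u$ on this reduced set, one checks that the first branch of \eqref{eq:u_t_def} now holds; this is the remark preceding \eqref{eq:dq_dt_simplified}. Hence $u_t$ stays constant on $(s,s')$, where $s'$ is the first time a ballistic bid meets the line $q_s+u_s(t-s)$, or $q_t$ reaches $\{0,1\}$. Since all trajectories are affine, $s'$ is determined as a minimum of finitely many explicit hitting times, and $s'>s$ because bids off $q_s$ are at positive distance from it. On $[s,s')$ the affine trajectories solve the system. Uniqueness on $[s,s')$ follows because any solution must have piecewise-constant right derivatives dictated by the same data. Concretely, an absolutely continuous $q_t$ with right-continuous $u_t$ cannot differ from the affine one before the first time some bid's status (below, on, or above $q_t$) changes, and the status changes only at $s'$.

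\textbf{Step 2: gluing.} At $s'$ the configuration $B_{s'}$ is well defined by continuity, and we restart Step 1. When $q_t$ hits $0$ or $1$, \eqref{eq:dq_dt} freezes it. The remaining bids then move ballistically toward it and are absorbed, which adds only finitely many further events.

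\textbf{Step 3: finiteness.} By Lemma \ref{finite}, $|\cC|<\infty$, so the iteration terminates after finitely many pieces. After the last piece the dynamics is a single ballistic piece for all later times, giving existence on $[0,\infty)$. Uniqueness is propagated piece by piece, and the resulting map $B\mapsto B_t$ is what we call $\cT_tB$.

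The main obstacle is the consistency at collision and separation times. After removing the separating slow bids, one must verify that the remaining set on $q_t$ indeed satisfies $|u|\le \nua$ or contains only fast bids, so that no infinite cascade of instantaneous separations occurs at a single time. I would handle this by a direct check with the two velocities, using the formula for $u_t$ in the second branch of \eqref{eq:u_t_def}. This formula is exactly the velocity generated by the fast bids alone, and it exceeds $\nua$ in absolute value precisely in the separation case. Hence, once the slow bids leave, the fast bids alone produce a velocity bounded by $\nub$ and the configuration is stable.
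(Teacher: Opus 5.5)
Your proposal is correct and follows the paper's route: ballistic evolution between consecutive collision times (as in the single-velocity theorem for sums of delta measures and the remark preceding \eqref{eq:dq_dt_simplified}), made global by the finiteness of $\cC$ from Lemma \ref{finite}; your check that the second-branch velocity in \eqref{eq:u_t_def} equals the fast-only velocity, so that no cascade of separations occurs, is a useful detail the paper leaves implicit. One small slip: after the slow bids separate, the first-branch condition $|\frac{w_t-\ell_t}{w_t+\ell_t}h_t|\le\nua$ does not hold, since the velocity still exceeds $\nua$; it is the second branch with $m^-_t=0$ that applies, and its formula coincides with the first-branch one, as your final paragraph correctly says.
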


\subsection{Convergence of the discrete model}
Here we show that the continuous model can be obtained as the limit of the $\gamma$ discrete model,  as $\gamma$ tends to $0$.

Let $B = [(b_1,v_1),\dots,(b_n,v_n)]$ be a  bid-velocity multi-set, and $B^{\gamma}$ its $\gamma$ discretized version,
 \begin{align}
    \label{eq:8}
 B^{\gamma} := [(b^\gamma_1, v_1), \dots, (b^\gamma_n,v_n)], \qquad b^\gamma_i:= \gamma \lfloor b_i/\gamma\rfloor.
  \end{align}
Recall the discrete operator $\rmT^{\gamma}$, defined in \eqref{tb1}-\eqref{6}, and denote the $\gamma$ rescaled operator by
  \begin{align}
\label{3}
    \rmT^\gamma_t:= (\rmT^\gamma)^{\lfloor t/\gamma\rfloor}. 
  \end{align}
  
  \begin{theorem}\label{conv}
Let $p\in [0,1]$ with $pn \in \mathbb N$. Let $B$ a bid-velocity multi-set with $n$ bids and $B^{\gamma}$ its $\gamma$-discretized version. Let $\cT_t$ be the continuous operator defined in Proposition \ref{4}, and $\rmT^\gamma_t$ the rescaled discrete operator defined in \eqref{3}. Denote by $q_t, b_i(t)$ and $q^\gamma_t,b_i^{\gamma}(t)$ the max-price and $i$-th bid of $\cT_t B$ and $\rmT_t^{\gamma}B^{\gamma}$, respectively.
Then
\begin{align}
\sup_{\gamma\in (0,1)} \sup_{t\ge 0} \frac 1 \gamma |b_i^{\gamma}(t)-b_i(t)|&<\infty \quad \text{for all } i=1, \ldots, n,\\
\sup_{\gamma\in (0,1)} \sup_{t\ge 0} \frac 1 \gamma |q_t^\gamma-q_t|&<\infty.
\end{align}
\end{theorem}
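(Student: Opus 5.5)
The plan is to argue by induction over the finitely many collision times of the continuous dynamics, using Lemma \ref{finite}. Order the collision set as $0=t_0<t_1<\dots<t_m$ and put $t_{m+1}=\infty$. On each interval $[t_j,t_{j+1})$ the continuous system \eqref{eq:dq_dt_simplified}--\eqref{eq:db_dt_simplified} is ballistic. The max-price moves with constant velocity $u_t=\frac{w_t-\ell_t}{w_t+\ell_t}h_t$, computed from the cluster of bids sitting on $q_t$. Bids off the cluster move with velocities $\pm v_i$. Boundary absorption at $0$ or $1$ adds at most one more break time, and the same argument applies there, as in part b of Lemma \ref{pe4}. The inductive claim is the following. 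At time $t_j$ there is a constant $C_j$, independent of $\gamma$, such that every discrete bid satisfies $|b^\gamma_i(t_j)-b_i(t_j)|\le C_j\gamma$ and $|q^\gamma_{t_j}-q_{t_j}|\le C_j\gamma$. The base case $j=0$ holds with $C_0=1$ because $b_i^\gamma=\gamma\lfloor b_i/\gamma\rfloor$. Since $np\in\NN$, the $np$-th order statistic moves by at most $\gamma$.

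For the inductive step on $[t_j,t_{j+1})$, I split the bids into three groups. The first group is the cluster $A_j=\{i: b_i(t)=q_t\}$. The other two are the bids strictly below and strictly above $q_t$ in the continuous model. For bids outside the cluster, the continuous distance to $q_t$ is bounded below by a positive constant except near $t_{j+1}$. After the $O(\gamma)$ initial error, these discrete bids therefore stay on the same side of $q^\gamma_t$ and move exactly by $\pm\gamma v_i$ per step. Their error does not grow. The cluster bids in the discrete model form a configuration of diameter $O(\gamma)$ in which exactly $w'=n(p-F_t(q_t^-))$ are winners and $\ell'$ are losers, because the bids outside the cluster are correctly classified. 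This is precisely the setting of Theorem \ref{ball1}, applied to the sub-multiset $A_j$ with $p$ replaced by $w'/(w'+\ell')$; the $O(\gamma)$ initial spread gives hypotheses \eqref{ottantuno} and \eqref{q0}. Its conclusion gives $|q^\gamma_t-q_t|\le C'\gamma$ on the interval, with $C'$ depending only on $C_j$ and on the finitely many possible centred configurations. The two-velocity formula for $q_t$ in Theorem \ref{ball1} coincides with \eqref{eq:u_t_def}, including the separation regime where slow bids leave at speed $\sign(u_t)\nua$. The same theorem, through \eqref{ping}--\eqref{pong}, controls the individual cluster bids.

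At the collision time $t_{j+1}$, one or more external bids reach $q_t$ in the continuous model. In the discrete model the same bids come within $O(\gamma)$ of $q^\gamma$ at a time that differs from $t_{j+1}$ by $O(\gamma)$: the relative velocity is at least $v_i-|u_t|>0$ before the hit (or $v_i+|u_t|$), so the crossing time is $O(\gamma)$-stable. During this $O(\gamma)$ time window the positions drift by at most $\nub\cdot O(\gamma)$. The new cluster at $t_{j+1}$ therefore again has diameter $O(\gamma)$, and the errors at $t_{j+1}$ are bounded by some $C_{j+1}$. Since $m$ is finite, taking $\max_j C_j$ gives the theorem.

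The main obstacle is the time window around a collision. There are two problems there. One is a grazing collision, where the relative velocity $v_i-|u_t|$ vanishes; the other is a slow bid that separates and immediately re-collides. In these cases the crossing time is not $O(\gamma)$-stable. I would first handle them by noting that when $|u_t|=\nua$ the slow bid travels alongside $q_t$ anyway. In that case the distinction between "on" and "next to" the cluster changes positions only by $O(\gamma)$, and I would absorb such bids into the cluster throughout.
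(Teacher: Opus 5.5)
Your proposal is correct and follows essentially the same route as the paper. Both argue by induction over the finitely many continuous collision times, apply Theorem \ref{ball1} to the sub-multiset of bids sitting at the max-price at the start of each interval, and use the linear approach of the colliding bid to show that discrete and continuous collision times differ by $O(\gamma)$.

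The obstacle you flag in your last paragraph does not actually arise. On each inter-collision interval all motions are linear, so a bid that reaches $q_t$ at $t_{j+1}$ from a positive distance at $t_j$ approaches at a strictly positive relative speed, at least that distance divided by $t_{j+1}-t_j$. A slow bid that separates at $t_j$ moves linearly away from $q_t$ until the next collision. Hence neither a grazing collision nor an immediate re-collision can occur.
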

\begin{proof}
We prove the theorem using an inductive argument over the time intervals between two consecutive collisions. More precisely we denote by $0:=t_0\le t_1 \le t_2 \le \ldots \le t_m$, for some $m\in \mathbb N$, the collision times in the continuous dynamics. In other words these are  the elements of the set $\cC$ defined in \eqref{Coll},  that, from Lemma \ref{finite}, we know to be finite. 

\noindent
We prove, by induction on $l\in \{0, 1, \ldots, m-1\}$, that if 
\begin{equation}\label{111}
\sup_{\gamma\in (0,1)} \frac 1 \gamma |b_i^{\gamma}(t_l)-b_i(t_l)|<\infty \qquad \text{for all } i=1, \ldots, n
\end{equation}
and
\begin{equation}\label{222}
\sup_{\gamma\in (0,1)} \frac 1 \gamma |q_{t_l}^\gamma-q_{t_l}|<\infty,
\end{equation}
then 
\begin{equation}\label{1111}
\sup_{\gamma\in (0,1)} \sup_{t\in [t_l, t_{l+1}]} \frac 1 \gamma |b_i^{\gamma}(t)-b_i(t)|<\infty \qquad \text{for all } i=1, \ldots, n
\end{equation}
and
\begin{equation}\label{2222}
\sup_{\gamma\in (0,1)} \sup_{t\in [t_l, t_{l+1}]}  \frac 1 \gamma |q_t^\gamma-q_t|<\infty.
\end{equation}
For simplicity we prove that the implication holds true in the first inter-collision time interval $[0,t_1]$, being the proof for the subsequent intervals totally analogous.
We notice that, from the definition of $B^{\gamma}$ given in \eqref{eq:8},  the induction hypothesis \eqref{111} and \eqref{222} are satisfied at time $t_0=0$. 
\noindent
Define the set $J:=\{j\in \{1,\ldots, n\}: b_j=q_0\}$ and  $\tilde B^{\gamma}$ the multi-set $[(\gamma \lfloor b_j/\gamma\rfloor, v_j):j\in J]$, i.e. the bids that are at the max-price at time 0. Then consider its evolution: ${\rmT}^{\gamma k}\tilde B^{\gamma}$, for $k=\lfloor t/\gamma\rfloor$, and denote by $\tilde q_t^\gamma$ the associated max-price. 
Denote by $t_1$ the first collision time for the continuous dynamics, $t_1:=\inf\{t: \:t \in \cC\}$,  and by $t_1^{\gamma}$ the first collision time for the discrete dynamics, i.e. 
$$t_1^{\gamma}:=\sup \{t\ge 0: \:  \text{sgn}(b^{\gamma}_i(t)-q^{\gamma}_t)= \text{sgn}(b^{\gamma}_i(0)-q^{\gamma}_0) \:\:\forall i\notin J \: \}.$$

We notice that from the inductive hypothesis \eqref{111} and \eqref{222}, that we know to be satisfied at time $0$, it follows that, for the sub-configuration $\tilde B^{\gamma}$ also the hypothesis \eqref{ottantuno} and \eqref{q0} of Theorem \ref{ball1} are satisfied. In particular the statement \eqref{222} coincides with \eqref{q0}, whereas the condition \eqref{111} restricted to the $i\in J$ implies \eqref{ottantuno} for the  sub-configuration $\tilde B^{\gamma}$.

We can apply Theorem \ref{ball1} to $(\tilde B^{\gamma}_t,\tilde q_t^\gamma)$ up to the first (discrete or continuous) collision time to deduce that there exists $C>0$ such that
\begin{equation}\label{aa}
\sup_{\gamma\in (0,1)} \sup_{0\le t \le \min\{t_1,t^{\gamma}_1\} } \frac 1 \gamma |\tilde q_t^\gamma-q_t|\le C.
\end{equation}
On the other hand,   we notice that 
\begin{equation}\label{bb}
\tilde q^{\gamma}_t= q^{\gamma}_t \qquad \text{for all } 0\le t \le t_1^{\gamma}
\end{equation}
We distinguish two cases.

\noindent
For all $\gamma\in (0,1)$ such that $t_1^{\gamma} \ge t_1$, from \eqref{aa} and \eqref{bb} it follows that
 \begin{equation}
 \sup_{0\le t \le t_1 } \frac 1 \gamma | q_t^\gamma-q_t|<C.
\end{equation}
Let now $\gamma $ be such that $t_1^{\gamma} < t_1$,  from \eqref{aa} and \eqref{bb} 
we have that
 \begin{equation}\label{aaa}
 \sup_{0\le t \le t_1^{\gamma} } \frac 1 \gamma | q_t^\gamma-q_t|<C.
\end{equation}
From the definition of $t_1^{\gamma}$, we know that there exists  $i^*\in \{1, \ldots, n\}\setminus J$ such that $\text{sgn}(b_{i^*}(t^{\gamma}+\gamma)-q^{\gamma}(t^{\gamma}+\gamma))\neq\text{sgn}(b_{i^*}(0)-q^{\gamma}_0)$, i.e. $i^*$ is the label of the first particle (in the discrete configuration) hitting the max-price after $t_1^{\gamma}$.
Then, necessarily 
\begin{equation}\label{aa1}
|q^{\gamma}(t_1^{\gamma})-b_{i^*}^{\gamma}(t_1^{\gamma})|\le 2 \nub \gamma, 
\end{equation}
For $\gamma$ small enough we can assume that $i^*$ coincides with its continuous counterpart, i.e. that it is the value such  that $b_{i^*}(t_1^-)\neq q(t_1^-)$ and $b_{i^*}(t_1)=q(t_1)$. 
Then, since at time 0
\begin{equation}
|b_{i^*}(0)-b_{i^*}^{\gamma}(0)|\le \gamma, 
\end{equation}
then
\begin{equation}\label{bbb}
|b_{i^*}(t_1^{\gamma})-b_{i^*}^{\gamma}(t_1^{\gamma})|\le  \gamma, 
\end{equation}
then using \eqref{aa1}, \eqref{aaa} and \eqref{bbb} it follows that
\begin{equation}\label{c}
|b_{i^*}(t_1^{\gamma})-q(t_1^{\gamma})|\le  (C+2\nub+1) \gamma
\end{equation}
then, since for all $t\in [t_1^{\gamma}, t_1]$ both $b_{i^*}(t)$ and $q(t)$ behave linearly, it follows that there exists a $C'>0$
\begin{equation}\label{c1}
|t_1^{\gamma}-t_1|\le C' \gamma.
\end{equation}
Then the number of discrete time-steps that are in the interval $[t_1^{\gamma},t_1]$ is smaller-equal than $C'$, then, necessarily
 \begin{equation}
 \sup_{t_1^{\gamma}\le t \le t_1 } \frac 1 \gamma | q_t^\gamma-q_t|<C+2\nub C':=C_1.
\end{equation}
Then we can conclude that
 \begin{equation}\label{A}
\sup_{\gamma\in (0,1)} \sup_{0 \le t \le t_1 } \frac 1 \gamma | q_t^\gamma-q_t|<C_1.
\end{equation}
We now want to prove that 
\begin{equation}
\sup_{\gamma\in (0,1)} \sup_{0\le t \le t_1} \frac 1 \gamma |b_i^{\gamma}(t)-b_i(t)|<\infty \qquad \text{for all } i=1, \ldots, n
\end{equation}
There are three possible cases.
\begin{itemize}
\item
If  $i\notin J$  (i.e. $b_i(0)\neq q_0$) and $b_i(t_1)\neq q_{t_1}$, then it means that for all $t\in [0, t_1]$ the continuous bid  travels ballistically  without interacting with the max-price. The same holds true for the discrete bid   if we take $\gamma$ small enough.  The two bids travel at the same velocity therefore the bound is easily verified.
\item
If  $i\notin J$ (i.e.  $b_i(0)\neq q_0$) and $b_i(t_1)= q_{t_1}$, it means that $i=i^*$ and the bound follows from the fact that there is no interaction with the max-price until $\min \lbrace t_1, t_1^{\gamma}\rbrace$ and the bound \eqref{c1}.
\item
If $i \in J$ (i.e. $b_i(0)= q_0$) we can apply the statement \eqref{ping}-\eqref{pong} of Theorem \ref{ball1} up to the first collision time to deduce that
\begin{equation}\label{bongo}
\sup_{\gamma\in (0,1)} \sup_{0\le t \le t_1 \wedge t_1^{\gamma}} \frac 1 \gamma |b_i^{\gamma}(t)-b_i(t)|<\infty \qquad \text{for all } i=1, \ldots, n.
\end{equation}
If $t_1\le t_1^{\gamma}$, then the bound is proven. If $\gamma$ is such that $t_1\ge t_1^{\gamma}$   then we use \eqref{bongo} up to time $t_1^{\gamma}$. For  $t \in [t_1^{\gamma},  t_1 ]$  we use  \eqref{c1} and the fact that the distance $|b_i^{\gamma}(t)-b_i(t)|$ remains bounded.
\end{itemize}
So \eqref{1111} and \eqref{2222} are proven for $l=0$. In particular the induction hypothesis  \eqref{111} and \eqref{222}  are satisfied at time $t_1$ i.e. the beginning of the next inter-collision interval $[t_1, t_2]$. This concludes the proof of the induction step and then of the Theorem.
\end{proof}

\subsection{Multi velocity bids and Chentsov colliding surfaces}\label{s7}

Assume that the bids may have two velocities. We describe the system as two colliding Chentsov surfaces. More precisely, let $\mu$ be a probability measure on $[0,1]\times\{\nu_-, \nu_+\}$ given by
\[
\mu(db, dv) = \alpha^- \, \mu^-(db) \, \delta_{\nu_-}(dv) \;+\; \alpha^+ \, \mu^+(db) \, \delta_{\nu_+}(dv),
\]
for some $\alpha^\pm \ge 0$ with $\alpha^- + \alpha^+ = 1$, where $\mu^\pm$ are probability measures on $[0,1]$. This is a mixture of measures $\mu^\pm$ associated to velocities $\nu_\pm$. 

For $\epsilon > 0$, let $\sB^\epsilon$ be a set of $\vep^{-1}$ independent bid-velocity points $(b_i,v_i)$ in $[0,1]\times\{\nu_-, \nu_+\}$ with distribution $\mu$.  
For each realization of $\sB^\epsilon$, denote by $\mathcal{T}_t \sB^\epsilon$ the unique solution of the multi-velocity collision system \eqref{eq:h_t}-\eqref{eq:init_cond} with initial condition $\sB^\epsilon$ (existence and uniqueness are guaranteed by Proposition~\ref{4}).

Define the empirical cumulative distribution function at time $t$ by:
\[
H^\epsilon(x,t):=F^\epsilon_t(x) := \epsilon \sum_{(b,v) \in \mathcal{T}_t \sB^\epsilon} \mathbf{1}_{\{b \le x\}}.
\]
The function $H^\epsilon:[0,1]\times \RR^+\to [0,1]$ can be seen as two colliding Chentsov surfaces with a jump along the trajectory of the max-price $q_t$. See Figures \ref{fig:gradini_2d} and \ref{fig:gradini_3d}.

We denote by $H(x,t):=\EE H^\epsilon(x,t)$ the deterministic field associated to $\mu$; it can be seen as the cumulative distribution function at time $t$, that is, the push-forward profile,
\[
H(x,t) = (\mathcal{T}_t \mu) ([0,x] \times \{\nu_-, \nu_+\}),
\]
where $\mathcal{T}_t \mu$ represents the deterministic continuous transport of the initial background measure $\mu$ under the multi-velocity dynamics \eqref{eq:h_t}-\eqref{eq:init_cond}.

\begin{remark}[Chentsov surfaces and structure]\rm
Call $H^\epsilon_p$ the random surface associated with a winner mass $p \in [0,1]$.
When $p=0$, all bid trajectories point downwards and are absorbed at the boundary $x=0$. When $p=1$, trajectories point upwards and are absorbed at $x=1$. In these boundary cases, the random surfaces $H^\epsilon_0$ and $H^\epsilon_1$ are standard Chentsov fields associated with the non-interacting processes $\mathcal{T}_t \sB^\epsilon$ (see \cite{ffgs23}).

For $p \in (0,1)$, under the regime $|u_t| \le \nu_-$ (i.e., when no slow-particle separation occurs along $q_t$), the field $H^\epsilon_p$ coincides with $H^\epsilon_0$ above the max-price trajectory $(q^\epsilon_t)_{t\ge0}$ and with $H^\epsilon_1$ below it, where $q^\epsilon_t := q(\mathcal{T}_t \sB^\epsilon)$. Along the line $q^\epsilon_t$, there is a spatial jump of height
\[
H^\vep (q^\epsilon_t,t)-H^\vep ((q^\epsilon_t)^-,t)= F^\epsilon_t(q^\epsilon_t) - F^\epsilon_t((q^\epsilon_t)^-),
\]
representing the total mass of bids attached to the max-price at time $t$.
\end{remark}

\begin{figure}[htbp]
                               \centering
       \includegraphics[width=.6\textwidth]{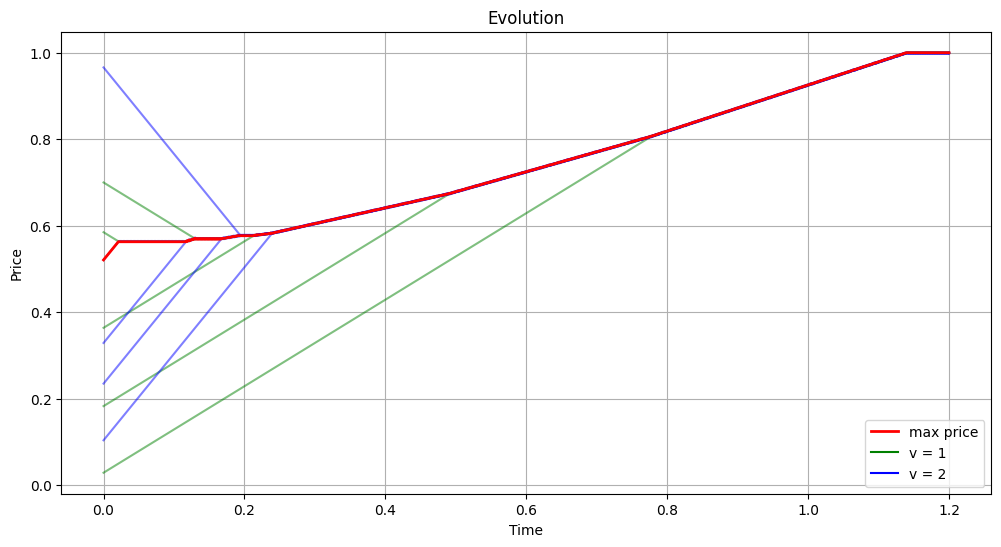} 
          \vspace{.5cm}
        \caption{Discrete auction evolution with $n=10$ bids, $n_- = 5$ have $\nu_{-} = 1$ and  $n_+ = 5$ have $\nu_{+} = 2$ with $p=0.7$ and $\gamma = 0.01$.}
        \label{fig:gradini_2d}
      \end{figure}
 \begin{figure}[htbp]
        \centering
        \includegraphics[width=.6\textwidth]{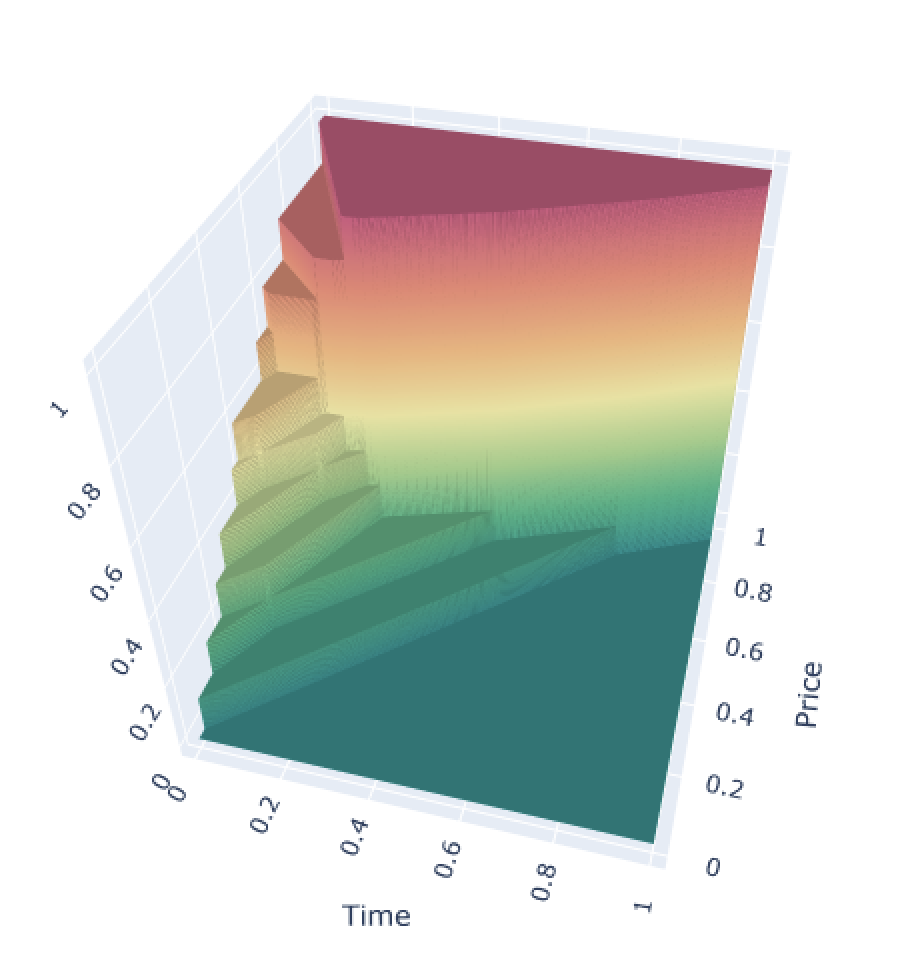} 
        \caption{Chentsov colliding fields $H^\epsilon$ corresponding to the evolution $T_t\sB^\epsilon$ in Fig.\/\ref{fig:gradini_2d}.}
        \label{fig:gradini_3d}
\end{figure}

\begin{definition}[Collision times for a realization]
\label{def:collision_times}
For a given realization of $\sB^\epsilon$, let $\mathcal{C}(\sB^\epsilon)$ denote the set of collision times for the multi-velocity dynamics \eqref{eq:h_t}-\eqref{eq:init_cond}:
\[
\mathcal{C}(\sB^\epsilon) := \left\{ t \ge 0 : \exists \, i \text{ s.t. } b_i(t^-) \neq q_t \text{ and } b_i(t) = q_t \right\},
\]
where $(b_i(t))_{i}$ are the positions of the bids evolving according to \eqref{eq:h_t}-\eqref{eq:init_cond}. 
\end{definition}

\begin{lemma}[Finite number of collisions]
\label{lem:finite_collisions}
For every fixed $\epsilon > 0$, and for almost every realization of $\sB^\epsilon$, the set $\mathcal{C}(\sB^\epsilon)$ is finite.
Consequently, for each $t \ge 0$, the set $\mathcal{C}_\epsilon(t) := \mathcal{C}(\sB^\epsilon) \cap [0,t]$ is finite almost surely, with cardinality $N_\epsilon(t) := |\mathcal{C}_\epsilon(t)| < \infty$.
\end{lemma}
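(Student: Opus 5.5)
The statement reduces to Lemma \ref{finite}: for fixed $\epsilon$ the sample $\sB^\epsilon$ is a finite bid-velocity multiset with $n=\epsilon^{-1}$ elements, so Lemma \ref{finite} applies realization by realization. The only work is to check that the hypotheses used there hold for almost every realization. After that, the claim about $\mathcal{C}_\epsilon(t)$ follows immediately, since a subset of a finite set is finite.

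I would follow the counting argument of Lemma \ref{finite} and make it explicit.

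First, fast bidders. A fast bidder (velocity $\nu_+$) that hits $q_t$ stays on it. It can never separate, because $|u_t|\le \nu_+$ always holds: by \eqref{eq:u_t_def} $u_t$ is either a harmonic-weighted quantity bounded by $\nu_-$, or bounded by $\nu_+$ times a mass ratio at most $1$. So there are at most $\alpha^+$-many, i.e.\ at most $n$, fast collisions.

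Second, slow bidders. Between two consecutive fast-collision times, $q_t$ is piecewise linear and changes slope only at slow collisions. A slow bidder that separates does so because $|u_t|>\nu_-$. It then travels in the same direction as $q_t$ but strictly slower, so it can only be caught again after the sign of $u_t$ or the regime changes, which itself needs another collision. I would try to bound the number of slow collisions per fast inter-collision interval by a constant depending only on $n$, for instance $2n$, as in Lemma \ref{finite}. Multiplying by the at most $n+1$ intervals gives $|\mathcal{C}(\sB^\epsilon)|\le C(n)<\infty$.

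The main obstacle is ruling out accumulation of slow collisions: a slow bidder repeatedly joining and leaving $q_t$ in Zeno fashion. I would handle it in two steps. First, exclude the null event of degenerate configurations: coincident bids, and ties such as $|w_t-\ell_t|h_t/(w_t+\ell_t)=\nu_-$ exactly, which depend only on the finitely many masses $k\epsilon$ and the fixed velocities. Ties among the $b_i$ have probability zero if $\mu^\pm$ are atomless; otherwise I would carry multiplicities, which the argument tolerates. Second, observe that when a slow bidder separates, its relative velocity with respect to $q_t$ is at least the strictly positive gap $\min\{|u|-\nu_-\}$ over the finitely many possible values of $u$. Between events each trajectory is linear, so collision times are isolated, and with finitely many possible slopes each slow bidder has only finitely many crossings per interval.
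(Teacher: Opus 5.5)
\textbf{Verdict: there is a genuine gap.} The step that carries all the content of the lemma, the bound on slow collisions, is never proved.

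Your skeleton matches Lemma~\ref{finite}, on which the paper's proof is modelled. Fast bids are absorbed at their first collision because $|u_t|\le\nu_+$, and slow collisions are counted between consecutive fast collisions. But you only say you ``would try'' to get a bound such as $2n$ per interval, and the argument you then offer against accumulation does not work.

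\begin{itemize}
\item Linearity between events plus finitely many possible slopes does not make collision times isolated, let alone finite in number. A two-slope zig-zag with geometrically shrinking pieces crosses a line infinitely often in bounded time.
\item The positive drift $|u_t|-\nu_-$ after a separation gives no lower bound on how long the separation lasts. Another arrival may change the slope of $q_t$ immediately.
\item ``It can only be caught again after a regime change, which needs another collision'' bounds nothing. That other collision may itself be a slow one, and you have not excluded slow collisions enabling one another indefinitely.
\item The tie $|w_t-\ell_t|h_t/(w_t+\ell_t)=\nu_-$ depends only on the deterministic masses $k\epsilon$ and on $\nu_\pm$, so it is not a null event. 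It is harmless only because \eqref{eq:u_t_def} assigns it to the non-separating branch.
\end{itemize}

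\textbf{The missing idea} is that the max-price velocity is updated by convex combination.
\begin{itemize}
\item Off collision times $u_t=(w_t-\ell_t)/D_t$, with $D_t:=\frac1n\sum_i v_i^{-1}\one\{b_i(t)=q_t\}$.
\item A bid of velocity $v$ arriving from below (resp.\ above) changes $(w_t-\ell_t,D_t)$ by $(\frac1n,\frac1{nv})$ (resp.\ $(-\frac1n,\frac1{nv})$). So the new ratio is a convex combination of the old $u_t$ and $+v$ (resp.\ $-v$).
\item Hence if $|u_t|\le\nu_-$, a slow arrival keeps $|u_t|\le\nu_-$. Attached slow bids can therefore be detached only at fast-collision times.
\item Suppose instead only fast bids sit at $q_t$ with, say, $u_t>\nu_-$. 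A slow arrival from above then either attaches, and the previous case applies, or passes through. In the pass-through case $u_t$ stays $>\nu_-$, because $\frac{w-\ell}{m^+/\nu_++m^-/\nu_-}>\nu_-$ is equivalent to $\frac{w-\ell-m^-}{m^+}\nu_+>\nu_-$.
\end{itemize}
So between two fast collisions each slow bid collides at most twice: one pass-through and one final attachment. This gives $|\mathcal C(\sB^\epsilon)|\le N^+_\epsilon+2N^-_\epsilon(N^+_\epsilon+1)$.

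Your per-interval framing is the right one. A fast arrival from below can detach an attached slow bid, and a later fast arrival from above can let it catch up again. So the paper's global count of ``at most two collisions per slow bid'' does not hold as stated, although finiteness is unaffected.
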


\begin{proof}
Since $\mu$ is a finite measure on $[0,1] \times \{\nu_-, \nu_+\}$, a realization of $\sB^\epsilon$ contains a finite number $N_\epsilon = N^-_\epsilon + N^+_\epsilon$ of total bids almost surely.
By \eqref{eq:u_t_def}, the max-price speed satisfies $|u_t| \le \nu_+$. Therefore, fast bids ($v = \nu_+$) cannot separate once they reach $q_t$; they are absorbed at their first collision and never collide again.
Conversely, each slow bid ($v = \nu_-$) can undergo at most two collisions (one upon joining $q_t$ and at most one upon separating from it).
Hence, the total number of collisions is deterministically bounded by $|\mathcal{C}(\sB^\epsilon)| \le 2 N^-_\epsilon + N^+_\epsilon < \infty$ almost surely.
\end{proof}

\subsubsection{Law of Large Numbers}

We now state the Law of Large Numbers (LLN) for the field $H^\epsilon(x,t)$ as $\epsilon \to 0$.

\begin{theorem}[LLN for the colliding Chentsov fields]
\label{thm:lln_pointwise}
For every fixed $t \in [0,\infty)$, the field $H^\epsilon(\cdot,t)$ converges uniformly to its mean $H(\cdot, t)$ almost surely as $\epsilon \to 0$, i.e.,
\begin{align}
  \label{101}
  \lim_{\epsilon \to 0} \sup_{x\in[0,1]} \left| H^\epsilon(x,t) - H(x,t) \right| = 0 \quad \text{a.s.}
\end{align}
\end{theorem}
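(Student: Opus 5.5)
We mimic the proof of Theorem~\ref{21}: reduce the statement to uniform convergence of the initial empirical cumulative functions, plus a stability property of the deterministic multi-velocity dynamics under small perturbations of the initial measure. Write $F^{\eps,\pm}(x):=\eps\sum_{(b,v)\in\sB^\eps}\one\{b\le x,\,v=\nu_\pm\}$ and $F^\pm(x):=\alpha^\pm\mu^\pm([0,x])$. By Glivenko--Cantelli, applied separately to the two velocity classes (the class labels are i.i.d.\ Bernoulli with parameter $\alpha^+$), almost surely
\[
\sup_{x\in[0,1]}\bigl|F^{\eps,\pm}(x)-F^\pm(x)\bigr|\toe 0 .
\]
The whole argument then runs on this event of full probability.

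First I would describe $H(\cdot,t)$ and $H^\eps(\cdot,t)$ by the same explicit formula in terms of the max-price trajectory. A bid that has not yet met the max-price is transported by $b\mapsto b+\nu_\pm s$ below $q_s$ and by $b\mapsto b-\nu_\pm s$ above it. As in \eqref{17}, for $x<q_t$ the field is the mass of bids whose transported position lies below $x$, and for $x\ge q_t$ it is one minus the mass whose transported position lies above $x$. Slow bids that have separated from $q$ contribute as translates, starting from their separation time and position.

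In the regime $|u_t|\le\nu_-$ no separation occurs, and the Remark on Chentsov surfaces gives the clean form
\[
H^\eps(x,t)=\sum_\pm F^{\eps,\pm}(x-\nu_\pm t)\quad\text{for } x<q^\eps_t,\qquad 1-H^\eps(x,t)=\sum_\pm \bigl(\alpha^\pm_\eps-F^{\eps,\pm}(x+\nu_\pm t)\bigr)\quad\text{for } x\ge q^\eps_t,
\]
where $\alpha^\pm_\eps:=F^{\eps,\pm}(1)$. The same formula holds for $H$ with $q_t$ in place of $q^\eps_t$. Given this representation, uniform convergence of $H^\eps(\cdot,t)$ off a shrinking neighbourhood of $q_t$ follows directly from the Glivenko--Cantelli step. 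On that neighbourhood, both fields lie between their left and right limits, and these converge. Hence the whole matter reduces to proving $q^\eps_s\to q_s$ uniformly for $s\in[0,t]$.

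Second, I would prove the convergence of the max-price. In the single-velocity case this came from the implicit equation $\G_t(q_t)=2pt$. Here I would seek an analogous conserved quantity. In each class the bids at the max-price satisfy the momentum identity of Remark~\ref{16}, weighted by $1/v$: with velocity $\nu$, the number of winning steps minus losing steps equals displacement divided by $\nu$, as in Proposition~\ref{i_vel}. This should give, for each $t$ outside separation events, an identity of the form
\[
\sum_\pm \frac{1}{\nu_\pm}\int_{x-\nu_\pm t}^{x+\nu_\pm t}F^\pm(z)\,dz=2pt\cdot(\text{correction}),
\]
and this identity would define $q_t$ as the root of a monotone function $\G^{(2)}_t$. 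Uniform convergence of the $G$-type integrals of $F^{\eps,\pm}$, together with strict monotonicity of $\G^{(2)}_t$ near $q_t$, then yields $q^\eps_t\to q_t$ exactly as in Theorem~\ref{21}.

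The main obstacle is the separation regime. When slow mass detaches from $q_t$, the identity above has to be restarted from the separation time with updated masses, and one must show that the separation times $\tau^\eps$ converge to $\tau$. I would handle this by a Gronwall-type argument on the ODE \eqref{eq:u_t_def}. The velocity $u$ is a continuous function of the masses $(w,\ell,m^\pm)$ as long as these stay away from the threshold $|u|=\nu_-$ and away from zero. Those masses are themselves $F^{\eps,\pm}$ evaluated along characteristics, so they converge uniformly. The delicate point is the threshold itself: separation is triggered when $|u|$ crosses $\nu_-$. I would first treat the generic case, where $t\mapsto |u_t|-\nu_-$ crosses zero transversally for the limiting absolutely continuous $\mu$, and then use continuity to pass through finitely many such crossing times on $[0,t]$.
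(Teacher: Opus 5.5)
\textbf{The failing step.} The gap is in the reduction at the end of your first part: ``on that neighbourhood both fields lie between their left and right limits, and these converge; hence the whole matter reduces to $q^\eps_s\to q_s$.''

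Lying between the same one-sided limits only bounds $|H^\eps(x,t)-H(x,t)|$ by the size of the jump at the max-price. It does not make the difference small. Your own formulas show this, in the regime where they hold. If, say, $q^\eps_t<x<q_t$, then $H(x,t)=\sum_\pm F^\pm(x-\nu_\pm t)$, while $H^\eps(x,t)=\sum_\pm F^{\eps,\pm}(x+\nu_\pm t)$. These differ by approximately $w_t+\ell_t$, the mass sitting at the max-price.

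Equivalently, $H(\cdot,t)$ is continuous near $q_t$ but off it. So whenever $q^\eps_t\neq q_t$, the difference $H^\eps(\cdot,t)-H(\cdot,t)$ jumps at $q^\eps_t$ by the empirical max-price mass $w^\eps_t+\ell^\eps_t\to w_t+\ell_t$. This forces $\sup_x|H^\eps(x,t)-H(x,t)|\ge (w^\eps_t+\ell^\eps_t)/2$.

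\textbf{The statement itself fails in sup norm.} For absolutely continuous $\mu$ one has $q^\eps_t\neq q_t$ almost surely for all small $\eps$. Already with a single velocity, $q^\eps_t=q_t$ means $\G^\eps_t(q_t)=2pt$, which pins one bid to a prescribed point given the others. The nondegenerate $\eps^{1/2}$ fluctuations of Section~\ref{s52} say the same thing. Hence at every $t>0$ with $q_t\in(0,1)$ and $w_t+\ell_t>0$, the quantity in \eqref{101} stays above $(w_t+\ell_t)/2$.

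For instance, take $\alpha^+=0$, $\mu$ uniform and $p=1/2$. Then $q_t\equiv 1/2$ and $w_t+\ell_t=2t$, so the sup-distance stays at least about $t$. The same lower bound holds if $H$ is read literally as $\EE H^\eps(\cdot,t)$, where the atom is smeared out.

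So no argument of this shape can prove the sup-norm statement. What your representation plus $q^\eps_t\to q_t$ does give is:
\begin{itemize}
\item uniform convergence on $\{x:|x-q_t|\ge\delta\}$ for each $\delta>0$;
\item hence convergence at every $x\neq q_t$;
\item convergence in $L^1(dx)$;
\item convergence in the L\'evy (or Skorokhod $J_1$) metric.
\end{itemize}

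\textbf{Comparison with the paper.} The paper's proof has the same defect, hidden in one sentence. It asserts that $H(\cdot,0)\mapsto H(\cdot,t)$ is continuous for the uniform norm and concludes by continuous mapping. But a sup-norm-small change of the initial profile moves $q_t$, and with it an atom of size $w_t+\ell_t$, so that continuity fails. Your explicit route has the merit of exposing this.

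\textbf{Your second step, for the corrected statement.} For the weaker-topology statement your second step is a good plan. Your guessed identity needs no correction as long as no slow bid has separated. By \eqref{eq:h_t}--\eqref{eq:u_t_def}, the bids at $q_t$ contribute exactly $w_t-\ell_t$ to $\frac{d}{dt}\sum_i\eps\,b_i(t)/v_i$. So this sum grows at rate $(p-w_t)-(1-p-\ell_t)+(w_t-\ell_t)=2p-1$. While $|u_s|\le\nu_-$, a bid of velocity $\nu$ has been absorbed by time $t$ iff $|b-q_t|\le\nu t$. The rate identity is therefore equivalent to
\[
\sum_\pm\frac{1}{\nu_\pm}\int_{q_t-\nu_\pm t}^{q_t+\nu_\pm t}F^\pm(z)\,dz=2pt .
\]
This equation is strictly monotone in $q_t$ near the root. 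So the argument of Theorem~\ref{21} carries over, and no Gronwall estimate is needed.

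The rate identity $2p-1$ persists after separation. What your sketch leaves open is the restart at separation times, i.e.\ showing $\tau^\eps\to\tau$. This includes the slow cluster that detaches from $q_t$ all at once; it is itself a moving atom and raises the same sup-norm issue.
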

The limit \eqref{101} is also called \emph{fluid limit}.
\begin{proof}
At time $t=0$, the initial empirical profile $H^\epsilon$ is generated by a point process $\sB^\vep$. 
By the functional Law of Large Numbers for iid,see, e.g. \cite{vdVaartWellner1996}, 
we have 
\[
\sup_{x\in[0,1]} \left|H^\epsilon(x,0) - H(x,0) \right| \xrightarrow{\epsilon \to 0} 0 \quad \text{a.s.}
\]
For any time $t > 0$, the transport map $\mathcal{T}_t$ acts as a piecewise-isometric, continuous transformation on the space of bid measures endowed with the $L^1$-distance on their distribution functions (or the 1-Wasserstein metric). Specifically, since all particle speeds are bounded above by $\nu_+$ and the flow $\mathcal{T}_t$ preserves measure order, the mapping $H(\cdot,0) \mapsto H(\cdot,t)=:\mathcal{T}_tH(\cdot,0) $ is continuous with respect to the uniform norm $\|\cdot\|_\infty$.

Therefore, applying the continuous mapping principle for the flow $\mathcal{T}_t$, the uniform convergence of the initial profile $H^\epsilon(\cdot,t) \to H(\cdot, 0)$ propagates to any finite time $t > 0$:
\[
\sup_{x\in[0,1]} \left|H^\epsilon(x,t) - H(x,t) \right| = \sup_{x\in[0,1]} \left| (\mathcal{T}_t H^\epsilon(x,0) - (\mathcal{T}_t H(x,0)) \right| \xrightarrow{\epsilon \to 0} 0 \quad \text{a.s.}
\]
This completes the proof.
\end{proof}


\subsubsection{Gaussian Fluctuations for the Chentsov Field: Conjecture}

Having established the pointwise Law of Large Numbers for the Chentsov surface $H^\epsilon(x,t)$, we turn to the fluctuation process. Define the space-time empirical fluctuation field by
\begin{equation}
\label{eq:chentsov_fluct}
\zeta^{H^\epsilon}(x,t) := \epsilon^{-1/2} \left( H^\epsilon(x,t) - H(x,t) \right), \qquad (x,t) \in [0,1] \times [0,T].
\end{equation}

For $p=0$ and $p=1$, $(\zeta^{H^\epsilon}(x,t): (x,t)\ne (q_t,t))$ is a Chentsov fluctuation process, generated by non-interacting spatial transport, whose weak convergence in $D([0,1]\times[0,T])$ to a centered Gaussian process $\zeta^H$ with covariances
\begin{align}
  \Cov(\zeta^H(x,t),\zeta^H(z,s))=
  \begin{cases}
    0 & \text{if the segment $ \overline{(x,t)(z,s)}$ crosses the line $q_t$}\\
    \mu((x,t)(z,s)) & \text{else}.
  \end{cases}
\end{align}
where$(x,t)(z,s)$ is defined as the set of lines crossing the 
the Lévy-Chentsov field conditioned to be zero at (a transported Brownian sheet) is a standard result for independent Poisson empirical transport (see, e.g., \cite{ffgs23,vdVaartWellner1996}). 
For $p \in (0,1)$, however, the interaction at the max-price trajectory $q_t$ introduces a dynamic boundary where particles aggregate, creating a spatial discontinuity in the fluid limit $H(x,t)$ along the curve $x = q_t$.

Based on the Poisson initial structure and the local linearity of the multi-velocity flow $\mathcal{T}_t$ away from collisions, we formulate the following conjecture:

\begin{conjecture}[Limit Fluctuation Field]
\label{conj:covariance_chentsov}
Let $\zeta^F(x,t)$ denote the centered Gaussian limit process of the empirical fluctuation field 
\[
\zeta^{F^\epsilon}(x,t) = \epsilon^{-1/2} \left( F^\epsilon_t(x) - H(x,t) \right)
\]
on $[0,1] \times [0,T] \setminus \mathcal{C}$, with covariance
    \[
    \operatorname{Cov}\left( \zeta^F(x,s), \zeta^F(y,t) \right) = \mu \left( \mathcal{T}_s^{-1}\big([0,x]\big) \cap \mathcal{T}_t^{-1}\big([0,y]\big) \right).
    \]
for all $(x,s)$, $(y,t)$ with $0 \le s \le t \le T$ and $s, t \notin \mathcal{C}$.
Here $\mathcal{T}_t$ is the deterministic transport dynamics defined by \eqref{eq:h_t}-\eqref{eq:init_cond}, and $\mu$ is the initial spatial intensity measure of the Poisson process.
\end{conjecture}


\vskip.2cm
\noindent
{\bf Acknowledgments.} 
We acknowledge financial support under the National Recovery and Resilience Plan (NRRP), Mission 4, Component 2, Investment 1.1, Call for tender No. 104 published on 02.02.2022 by the MUR, funded by the European Union, NextGenerationEU,  Project n. 202277WX43, CUP E53D23005500006.
P.A.F. acknowledges the hospitality of the University of Modena and Reggio Emilia. G.C., C.F., and N.M. acknowledge the hospitality of the University of Buenos Aires. G.C. and C.F. are members of the Gruppo Nazionale per la Fisica Matematica of the Istituto Nazionale di Alta Matematica (INdAM) and acknowledge support from the FAR UniMoRe project, CUP E93C25002460005.

\bibliographystyle{plain}
\bibliography{auctions}

\bigskip
\parskip 0pt
\noindent Gioia Carinci, Chiara Franceschini, Nicola Manelli 

\noindent {Department of Physics, Informatics and Mathematics, University of Modena and Reggio Emilia,  Modena, Italy}

\noindent \href{mailto: gioia.carinci@unimore.it}{\texttt{gioia.carinci@unimore.it}}, \href{mailto: cfrances@unimore.it}{\texttt{cfrances@unimore.it}}, \href{mailto:nicola.manelli@unimore.it}{\texttt{nicola.manelli@unimore.it}}

\bigskip

\noindent Pablo A. Ferrari

\noindent {Departamento de Matemática, Facultad de Ciencias Exactas y Naturales, Universidad de Buenos Aires and IMAS-CONICET, Buenos Aires, Argentina}

\noindent \href{mailto:pferrari@dm.uba.ar}{\texttt{pferrari@dm.uba.ar}}, \href{https://mate.dm.uba.ar/\string~pferrari/}{\texttt{https://mate.dm.uba.ar/\string~pferrari}}

\end{document}